\author{Dang Nguyen-Bac, Favre Charles}
\newtheorem{thmint}{Theorem}
\newtheorem{thm}{Theorem}[section]
\newtheorem{lem}[thm]{Lemma}
\newtheorem{prop}[thm]{Proposition}
\newtheorem{prop-def}[thm]{Proposition-Definition}
\newtheorem{cor}[thm]{Corollary}
\newtheorem{defi}[thm]{Definition}
\newtheorem{rem}[thm]{Remark}
\newtheorem{ex}[thm]{Example}
\newtheorem{conj}{Conjecture}
\theoremstyle{remark}
  \newcommand{\numbpf}[1]{\num_{\BPF}^{#1}}
  \newcommand{\normbpf}[1]{\left\lVert#1\right\rVert_{\BPF}}
  \newcommand{\normsigma}[1]{\left\lVert#1\right\rVert_{\Sigma}}
  \newcommand{\normbpfs}[1]{\left\lVert#1\right\rVert_{\BPF}^\vee}
  \newcommand{\normom}[1]{\left\lVert#1 \right\rVert_{\omega}}
\newcommand{\C}{\mathbb{C}}
\newcommand{\R}{\mathbb{R}}
\newcommand{\Q}{\mathbb{Q}}
\newcommand{\N}{\mathbb{N}}
\newcommand{\Pp}{\mathbb{P}}
\newcommand{\K}{K}
\newcommand{\cM}{ \mathcal{M}}
\newcommand{\X}{ \mathcal{X} }
\newcommand{\cI}{ \mathcal{I} }
\newcommand{\cJ}{ \mathcal{J} }
\newcommand{\cO}{ \mathcal{O} }
\newcommand{\cV}{ \mathcal{V} }
\newcommand{\fA}{\mathfrak{A}}
\newcommand{\fB}{\mathfrak{B}}
\newcommand{\om}{{\omega}}
\newcommand{\fM}{{\mathfrak{M}}}
\DeclareMathOperator{\hvol}{\widehat{\vol}}
\DeclareMathOperator{\ord}{ord}
\DeclareMathOperator{\LX}{LX}
\DeclareMathOperator{\Mv}{Mv}
\DeclareMathOperator{\spec}{spec}
\DeclareMathOperator{\psef}{Psef}
\DeclareMathOperator{\BPF}{BPF}
\DeclareMathOperator{\cBPF}{c-BPF}
\DeclareMathOperator{\Nef}{Nef^1}
\DeclareMathOperator{\cNef}{c-Nef^1}
\DeclareMathOperator{\nef}{Nef}
\DeclareMathOperator{\NS}{NS}
\DeclareMathOperator{\vol}{vol}
\DeclareMathOperator{\nums}{N^1_{\Sigma}}
\newcommand{\numbpfs}[1]{\num^{#1,\vee}_{\BPF}}
\DeclareMathOperator{\Vect}{Vect}
\DeclareMathOperator{\num}{N}
\DeclareMathOperator{\wnum}{w-N}
\DeclareMathOperator{\cnum}{c-N}
\author{Nguyen-Bac Dang, Charles Favre}
\title{Intersection theory of nef $b$-divisor classes}
\begin{document}

\maketitle

\begin{abstract}
We prove that any nef $b$-divisor class on a projective variety defined over an algebraically closed field of characteristic $0$ is a decreasing limit of nef Cartier classes. Building on this
technical result, we construct an intersection theory of nef $b$-divisors, and prove several variants of the Hodge index theorem inspired by the work of Dinh and Sibony.
We show that any big and basepoint free curve class is a power of a nef $b$-divisor, and relate this statement to
Zariski decompositions of curves classes introduced by Lehmann and Xiao.  
Our construction allows us to relate various Banach spaces contained in the space of $b$-divisors which were defined in our previous work. 
\end{abstract}

\setcounter{tocdepth}{2}

\tableofcontents

\medskip

\addcontentsline{toc}{section}{Introduction}

\section*{Introduction}
The notion of $b$-divisor was introduced by Shokurov and have found striking applications in algebraic geometry and singularity theory~\cite{shokurov_3_fold,shokurov_prelimiting,MR2134273,corti_3_fold,
MR2966720,MR2931273,zhang_volume_isolated_singularities,MR3329027,b-flips}, 
 and in algebraic dynamical systems~\cite{boucksom_favre_jonsson_deggrowth,cantat_bir_surfaces,
gignac_ruggiero,xie_periodic,blanc_cantat,spectral}. It provides the right setting to construct Zariski decomposition of divisors~\cite{boucksom_favre_jonsson_volume,MR3010168,MR3368254} and higher codimension cycles \cite{fulger_lehmann_zariski}.
It is also deeply tied with some recent developments in pluripotential analysis on non-Archimedean analytic varieties
(see \cite{MR1311351,MR1629925,MR2244803,MR3419957,trivial-valued,MR4018262} and the references therein).

A $b$-divisor is by definition a collection of Neron-Severi classes in all birational models of a fixed projective variety satisfying natural compatibility conditions under push-forward morphisms. 
It is thus in essence an "infinite" object and defining the intersection product of two such objects always requires a limiting process which may be guaranteed  only under special circumstances. 

Such an intersection theory have been successfully developed for special classes of $b$-divisors:  Cartier $b$-divisors in~\cite{MR3239119}, relatively nef $b$-divisors over 
a closed point in~\cite{MR2426355}, and over the spectrum of a valuation ring~\cite{MR3419957,trivial-valued,MR4018262}. In this paper, we focus on nef  $b$-divisors
over a projective variety, and show that one can develop a natural intersection calculus for them. We proceed by proving
a Hodge index theorem in this context, and solve the operator $\alpha \mapsto \alpha^{d-1}$ ($d$ being the dimension of the ambiant variety) which may be viewed as
an analog of the complex Monge-Amp\`ere operator. 

\begin{center} $\diamond$ \end{center}

We work under the following setup, and borrow the terminology from~\cite{boucksom_favre_jonsson_deggrowth,spectral}.

Let $X$ be any smooth projective variety of dimension $d$ defined over an algebraically closed field $K$ of characteristic $0$.
A Weil $b$-divisor class $\alpha$ is a family of real Neron-Severi classes $\alpha_{X'}\in\NS(X')$ that are compatible under push-forward. 
Here $X'$ runs over all smooth birational models lying over $X$.

The space $\wnum^1(\X)$ of Weil $b$-divisor classes forms an infinite dimensional vector space. It can be identified
with the projective limit of the real Neron-Severi spaces of all models, hence carries a natural locally convex topology
in which every bounded set is relatively compact.

Cartier $b$-divisors are Weil $b$-divisors for which there exists a model 
$X'$ such that $\alpha_{X''} = \pi^* \alpha_{X'}$ for any model such that $\pi\colon X''\to X'$ is regular. In this situation
we say that $\alpha$ is determined by $\alpha_{X'}$ in $X'$. The space $\cnum^1(\X)$ of Cartier $b$-divisors is dense in $\wnum^1(\X)$.

Since the nef property is stable by pull-back one can define nef Cartier classes as those determined by nef classes in some model.
We say that a class $\alpha \in\wnum^1(\X)$ is nef if there exists a sequence of nef Cartier $b$-classes such that $\alpha_n \to \alpha$.
Our main aim is to explore in depth the structure of the convex cone $\Nef(\X)$ of nef $b$-divisors. 
To that end, we first prove a fundamental approximation result for arbitrary nef classes.

\begin{restatable}{thmint}{approx}\label{thm:approx-nef}
Let $\alpha \in \Nef(\X)$ be any nef class. Then there exists a  sequence of nef Cartier $b$-divisor classes $\alpha_n \in \cNef(\X)$ decreasing to $\alpha$.
\end{restatable}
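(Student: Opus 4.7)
The plan is to construct $\alpha_n$ canonically from $\alpha$ itself, guided by an arbitrary preliminary approximation. By the definition of $\Nef(\X)$, fix a sequence $(\beta_n)$ of nef Cartier $b$-divisor classes converging to $\alpha$; each $\beta_n$ is determined in a smooth model $Y_n$, and by replacing $Y_{n+1}$ by a common resolution of $Y_n$ and $Y_{n+1}$, we may assume that $Y_{n+1}$ dominates $Y_n$ via a morphism $\pi_n$. My candidate for the decreasing sequence is $\alpha_n := \alpha^{Y_n}$, the nef Cartier $b$-divisor class determined in $Y_n$ by the nef class $\alpha_{Y_n}$.

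The first task is to verify the monotonicity $\alpha_n \geq \alpha_{n+1} \geq \alpha$ in the pseudoeffective order on $\wnum^1(\X)$. The key tool is the numerical negativity lemma: for a birational morphism $\pi \colon Y' \to Y''$ between smooth projective varieties and a nef class $\delta \in \nef(Y')$, the class $\pi^*(\pi_*\delta) - \delta$ is $\pi$-exceptional with $\pi$-nef opposite, hence pseudoeffective on $Y'$. Applied to $\pi_n$ with $\delta = \alpha_{Y_{n+1}}$, and using the compatibility $(\pi_n)_* \alpha_{Y_{n+1}} = \alpha_{Y_n}$, this gives $(\alpha_n)_{Y_{n+1}} - (\alpha_{n+1})_{Y_{n+1}} \in \psef(Y_{n+1})$; pulling back to any higher model and pushing forward to any other model propagates this to the inequality $\alpha_n \geq \alpha_{n+1}$ on $\wnum^1(\X)$. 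The same mechanism applied on a common resolution of $Y_n$ with an arbitrary third model yields $\alpha_n \geq \alpha$.

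It then remains to prove $\alpha_n \to \alpha$, which amounts to $(\alpha_n)_Z \to \alpha_Z$ in $N^1(Z)_\R$ for every smooth model $Z$. The sequence $((\alpha_n)_Z)$ is decreasing in $\psef(Z)$ and bounded below by $\alpha_Z$, hence it lies in the compact set $((\alpha_1)_Z - \psef(Z)) \cap (\alpha_Z + \psef(Z))$ and converges to some $\gamma_Z \geq \alpha_Z$; these values assemble into a Weil class $\gamma \geq \alpha$. When $Z$ is dominated by some $Y_n$, compatibility of $\alpha$ forces $(\alpha_m)_Z = \alpha_Z$ for all $m \geq n$ and $\gamma_Z = \alpha_Z$ is immediate. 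The main obstacle is a model $Z$ not dominated by any $Y_n$; I would split $(\alpha_n - \alpha)_Z = (\alpha_n - \beta_n)_Z + (\beta_n - \alpha)_Z$ (the second summand vanishes in the limit by hypothesis) and test the first against an ample class $h$ on $Z$ via the projection formula on a common resolution $W_n$ of $Y_n$ and $Z$, obtaining $h^{d-1}\cdot(\alpha_n-\beta_n)_Z = (\alpha_{Y_n}-(\beta_n)_{Y_n})\cdot c_n$, where $c_n := (\tau_n)_*(\sigma_n^* h)^{d-1} \in N^{d-1}(Y_n)$ has the $n$-independent pushforward $(p_n)_* c_n = \rho_* h^{d-1}$ under the structural maps $p_n \colon Y_n \to X$ and $\rho \colon Z \to X$. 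Writing $c_n = p_n^*(\rho_* h^{d-1}) + e_n$ with $e_n$ a $p_n$-exceptional correction reduces the problem to controlling the intersection $(\alpha_{Y_n}-(\beta_n)_{Y_n})\cdot e_n$; the hard point, which I expect constitutes the crux of the proof, is to show this exceptional contribution vanishes in the limit, presumably by exploiting the nefness of $\alpha_{Y_n}$ and $(\beta_n)_{Y_n}$ together with the bounded intersection estimates for nef $b$-divisor classes developed in earlier sections of the paper.
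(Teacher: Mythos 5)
Your construction breaks down at its very first step: the incarnation $\alpha_{Y_n}$ of a nef $b$-divisor class is \emph{not} a nef class on $Y_n$ in general, so $[\alpha_{Y_n}]$ need not belong to $\cNef(\X)$ and your candidate sequence does not consist of nef Cartier classes. A nef $b$-divisor class is a weak limit of pull-backs of nef classes followed by push-forwards, and the push-forward of a nef divisor class under a birational morphism is only \emph{movable}, not nef (this is exactly the content of Theorem~\ref{thm:interpret-BPF} in codimension $1$; a concrete failure occurs already for a threefold admitting a flop: the Cartier $b$-class determined by a nef class on the flopped model has a movable, non-nef incarnation on the original model). By Theorem~\ref{thm:cartier_nef}, a Cartier class lies in $\cNef(\X)$ only if it is determined by a nef class in some model, so $[\alpha_{Y_n}]$ is genuinely disqualified. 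For the same reason your appeal to the negativity lemma with $\delta=\alpha_{Y_{n+1}}$ is not licit as stated; the monotonicity $[\alpha_{Y_n}]\ge[\alpha_{Y_{n+1}}]\ge\alpha$ is nonetheless true, but it is Lemma~\ref{lem:negativity}, proved by approximating $\alpha$ by nef Cartier classes rather than by applying the negativity lemma to the incarnations themselves.

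Beyond this, the part you defer as ``the crux'' is not where the real difficulty of the theorem lies, and your sketch of it is also incomplete: since the poset of models has no countable cofinal subset when $K$ is uncountable, controlling $([\alpha_{Y_n}])_Z-\alpha_Z$ on models $Z$ dominated by no $Y_n$ cannot be reduced to the convergence of $\beta_n$ without a quantitative input that your argument does not supply (testing against $\om^{d-1}$ gives $0$ for every $n$, which says nothing about the exceptional part). The actual engine of the paper's proof is entirely absent from your proposal: one first constructs, for any nef $b$-class $\alpha$ and any big perturbation $\gamma$ in a prescribed model $X'$, a nef Cartier class $\beta$ with $\beta\ge\alpha$ and $\beta_{X'}\le\alpha_{X'}+\gamma$ (Proposition~\ref{pro:good-approx}); this rests on nef envelopes and on the uniform global generation of multiplier ideals (Theorem~\ref{thm:global-gen}), which is what produces nef --- rather than merely movable --- approximants from above. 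This yields a decreasing \emph{net}, and the passage from a net to a decreasing \emph{sequence} is then achieved via the Diskant inequalities of \S\ref{sec:Diskant}, a step your proposal does not address at all.
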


The crucial point in the statement above is the fact that the approximating sequence is decreasing meaning that it satisfies the condition $\alpha_n \ge \alpha$ in the sense that $\alpha_n - \alpha$ is pseudo-effective for all $n$.
The proof relies on the asymptotic construction of multiplier ideals and is completely analogous to (and in fact simpler than) the
approximation results of relatively nef $b$-divisors (over a smooth point~\cite{MR2426355}, or over a curve~\cite{MR3419957,trivial-valued,MR4018262}).
A similar approximation result is proved in a toroidal context in~\cite[Lemma 5.9]{botero-gil} which does not use multiplier ideals techniques.

After we released a first version of this paper, and using the same technics based on multiplier ideal sheaves, S. Boucksom and M. Jonsson have been able
to prove an approximation result for psh functions on the Berkovich analytification of projective varieties defined over a trivially valued field.
As explained in~\cite[\S 10.3]{global-BJ}, their far-reaching result implies our theorem.

\begin{center} $\diamond$ \end{center}

In order to intersect nef $b$-divisors, it is necessary to discuss an appropriate notion of positivity for
classes of arbitrary codimension.

We first define a $b$-class of codimension $k$ as a collection of numerical classes $\alpha_{X'}\in\num^k(X')$ 
for any proper birational model $X'\to X$ that are invariant under push-forward morphisms. Cartier $b$-classes
are defined analogously as in codimension $1$.  We obtain a locally convex topological space $\wnum^k(\X)$ containing $\cnum^k(\X)$
as a dense subset and we have a natural perfect intersection pairing $\wnum^k(\X) \times \cnum^{d-k}(\X) \to \R$.

Several notions of positivity
have been explored in $\num^k(X')$, see~\cite{MR3592463}. Among those, we shall use the notion of basepoint-free (BPF) classes
which seems appropriate
in several contexts, in particular for applications to dynamical systems, see~\cite{dang_normal,spectral}.

By definition, a numerical class $\num^k(X')$ is BPF if it can be approximated by images under flat morphisms of 
complete intersection ample classes. 
As above, this notion leads to the definition of codimension $k$ BPF numerical $b$-classes.

Note that a class in $\num^1(X')$ is BPF if and only if it is nef, so that  $\BPF^1(\X)=\Nef(\X)$.
We further prove in \S\ref{sec:movable} that a $b$-numerical class $\alpha\in\wnum^k(\X)$ is BPF iff  for all 
models $X'$ the class $\alpha_{X'}$ is movable  in the sense of~\cite[\S 3]{fulger_lehmann_zariski}, see Theorem~\ref{thm:interpret-BPF}. These results suggest
the following duality conjecture that is proved by B.~Lehmann~\cite[Theorem~1.5]{movable-class} when $k=1$, and that we extend to the case $k=d-1$ (Proposition~\ref{prop:dual-BPF}).
\begin{conj}\label{conj:dual-BPF}
~
\begin{itemize}
\item
A $b$-numerical class $\alpha\in\wnum^k(\X)$ is psef
iff for any  $b$-numerical class $\beta\in\cBPF^{d-k}(\X)$ 
 we have $(\alpha\cdot\beta)\ge0$.
\item
A $b$-numerical class $\alpha\in\wnum^k(\X)$ belongs to $\BPF^k(\X)$ iff for any psef Cartier $b$-numerical class $\beta\in\cnum^{d-k}(\X)$,  we have $(\alpha\cdot\beta)\ge0$.
\end{itemize}
\end{conj}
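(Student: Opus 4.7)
The plan is to reduce both equivalences, via the perfect pairing $\wnum^k(\X)\times\cnum^{d-k}(\X)\to\R$, to classical duality results between the pseudo-effective and movable cones inside $\num^k(X')$ for each birational model $X'$. The key identity is that whenever $\beta\in\cnum^{d-k}(\X)$ is determined on $X'$, one has $(\alpha\cdot\beta)=(\alpha_{X'}\cdot\beta_{X'})$, so positivity conditions on $b$-classes translate model by model, provided one knows that every element of the relevant finite-dimensional cone on $X'$ can be realized as the $X'$-component of a Cartier $b$-class of the appropriate type (a direct consequence of the definition of the Cartier subspace in the projective system of models).

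The easy directions are then essentially formal. If $\alpha$ is psef as a $b$-class, each $\alpha_{X'}$ is psef, and by Theorem~\ref{thm:interpret-BPF} any $\beta\in\cBPF^{d-k}(\X)$ restricts to a movable class on $X'$; since movable and psef pair non-negatively on a projective variety, $(\alpha\cdot\beta)\ge 0$. An identical argument, with the roles of the two cones exchanged, settles the easy half of the second bullet.

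For the converse when $k=1$, I would fix a smooth model $X'$ and let $\gamma\in\Mov^{d-1}(X')$ be an arbitrary movable curve class. Extending $\gamma$ tautologically to a Cartier $b$-class $\beta\in\cBPF^{d-1}(\X)$ determined on $X'$, the hypothesis gives $(\alpha_{X'}\cdot\gamma)\ge 0$ for every such $\gamma$. Lehmann's duality~\cite{movable-class} between the psef cone and the movable cone on the projective variety $X'$ then forces $\alpha_{X'}$ to be pseudo-effective; since $X'$ is arbitrary, $\alpha$ is psef as a Weil $b$-class. The case $k=d-1$ of the second bullet runs in parallel, substituting BDPP for Lehmann: the hypothesis yields $(\alpha_{X'}\cdot D)\ge 0$ for every psef divisor class $D$ on $X'$, so by BDPP $\alpha_{X'}$ is movable on each $X'$, and Theorem~\ref{thm:interpret-BPF} upgrades this to $\alpha\in\BPF^{d-1}(\X)$. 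The remaining two extremal cases (the first bullet with $k=d-1$ and the second with $k=1$) reduce to the same finite-dimensional dualities in their dual formulations.

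The main obstacle for the full conjecture is that in intermediate codimensions $2\le k\le d-2$ no analogue of Lehmann's or BDPP's theorem is known on a single projective variety; the duality between $\psef^k(X')$ and the movable cone in $\num^k(X')$ is itself open~\cite{MR3592463}. The reduction above transforms the $b$-divisorial conjecture into its finite-dimensional counterpart model by model, but does not by itself supply the missing finite-dimensional input.
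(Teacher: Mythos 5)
You have correctly read the statement as a conjecture and restricted your actual argument to $k=1$ and $k=d-1$, which is exactly the scope of the paper's Proposition~\ref{prop:dual-BPF}. The easy directions and the two implications concerning curve classes $\alpha\in\wnum^{d-1}(\X)$ go through essentially as in the paper (localize the pairing to the model where the Cartier class is determined, then apply Kleiman duality, BDPP and Theorem~\ref{thm:interpret-BPF}). The problem is your final claim that the ``remaining two extremal cases'' reduce to the same finite-dimensional dualities. For the first bullet with $k=d-1$ this is true. But the second bullet with $k=1$ --- a $b$-divisor class pairing non-negatively with every \emph{psef} Cartier curve $b$-class lies in $\Nef(\X)$ --- does \emph{not} localize, and it is the substantial part of the proposition. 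To run duality on a fixed model $X'$ you would need every psef curve class $\gamma\in\num^{d-1}(X')$ to arise as the incarnation of a psef Cartier curve $b$-class whose pairing with $\alpha$ computes $(\alpha_{X'}\cdot\gamma)$; the tautological extension $[\gamma]$ is in general \emph{not} psef as a $b$-class, because numerical pull-back does not preserve pseudo-effectivity in codimension $d-1\ge 2$ (the paper warns about this explicitly). The paper's argument for this implication --- which is Lehmann's theorem --- is of a different nature: a Hahn--Banach separation in the locally convex space $\wnum^1(\X)$ combined with Lemma~\ref{lem:continuous-dual-wN1}, which identifies every weakly continuous linear form on $\wnum^1(\X)$ with a Cartier curve class and whose proof requires a non-trivial construction of a divergent sequence of exceptional classes. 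This entire step is absent from your proposal.

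A second, smaller gap occurs in your treatment of the first bullet for $k=1$: a movable curve class $\gamma$ on $X'$ does not ``extend tautologically'' to an element of $\cBPF^{d-1}(\X)$ determined on $X'$. The movable cone of $X'$ strictly contains $\BPF^{d-1}(X')$ (it is the closure of birational push-forwards of BPF classes from higher models), and by Proposition~\ref{pro:cartier_nef_curve} a Cartier curve class is BPF as a $b$-class only when determined by a BPF class in some model; moreover, a Cartier BPF class $\beta$ with $\beta_{X'}=\gamma$ determined on a higher model $X''$ pairs with the Weil class $\alpha$ through $\alpha_{X''}$, not through $\alpha_{X'}$. The paper handles this point via Theorem~\ref{thm:interpret-BPF} and the Fulger--Lehmann description of the movable cone rather than by a tautological extension. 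A minor attribution remark: the finite-dimensional duality needed here is BDPP; Lehmann's theorem in~\cite{movable-class} is precisely the $k=1$ case of the \emph{second} bullet discussed above, not of the first.
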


With these notions at hand and using Theorem~\ref{thm:approx-nef}, one can follow the construction of positive intersection classes developed in~\cite{boucksom_favre_jonsson_volume,boucksom_demailly_paun_peternell},
and define a generalized intersection product of nef $b$-divisor classes which takes the following form.

Given any collection of nef classes $\{\alpha_i\}_{1\le i \le k}\in \Nef(\X)$, we define
$(\alpha_1\cdot\ldots\cdot\alpha_k)\in \BPF^k(\X)$ as the infimum $(\beta_1\cdot\ldots\cdot\beta_k)$
over all  nef Cartier $b$-classes $\beta_i\ge \alpha_i$. That this intersection product is multilinear and extends
the natural one on Cartier classes is a consequence of Theorem~\ref{thm:approx-nef}.

Of crucial importance to applications is the fact that the intersection product satisfies 
various forms of Hodge index theorem. In particular, we shall prove:
\begin{restatable}{thmint}{hodgeindex}\label{thm:Hodge-index}
Any two $b$-divisor classes $\alpha, \beta\in\Nef(\X)$ such that $\alpha\cdot \beta=0$ are proportional.
\end{restatable}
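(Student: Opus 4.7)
The plan is to combine Theorem~\ref{thm:approx-nef} with the classical Hodge--Riemann bilinear relations in signature $(1,\rho-1)$, reducing the statement at the $b$-divisor level to an asymptotic Hodge index argument on the Cartier approximating models.

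First, apply Theorem~\ref{thm:approx-nef} to get decreasing approximations $\alpha_n\downarrow\alpha$ and $\beta_n\downarrow\beta$ by nef Cartier $b$-classes, with each pair Cartier on a common model $X_n$. Fix an ample Cartier $b$-class $\omega$. The classical Khovanskii--Teissier inequality on $X_n$ yields
\[
(\alpha_n\cdot\beta_n\cdot\omega^{d-2})^2\ \ge\ (\alpha_n^2\cdot\omega^{d-2})(\beta_n^2\cdot\omega^{d-2}),
\]
and continuity of the $b$-intersection along decreasing Cartier sequences (built into the definition of the intersection product recalled in the introduction) transfers this inequality to the $b$-level. The hypothesis $\alpha\cdot\beta=0$ forces the left-hand side to vanish; since both factors on the right are nonnegative by nefness, at least one of them must vanish. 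Up to swapping the roles of $\alpha$ and $\beta$, we may assume $\alpha^2\cdot\omega^{d-2}=0$, so that $\alpha_n^2\cdot\omega^{d-2}\to 0$ along the approximation.

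Second, apply the classical Hodge index theorem on each $X_n$ to the form $Q_\omega(u,v):=u\cdot v\cdot\omega^{d-2}$ on $\num^1(X_n)_{\R}$, which has signature $(1,\rho(X_n)-1)$. The Cartier representatives $\alpha_n|_{X_n}$ and $\beta_n|_{X_n}$ are nef, with $Q_\omega(\alpha_n,\alpha_n)\to 0$ and $Q_\omega(\alpha_n,\beta_n)\to 0$. In signature $(1,n-1)$, a nef null vector together with a nef vector in its $Q_\omega$-orthogonal hyperplane must be proportional; quantitatively, setting $c_n:=(\beta_n\cdot\omega^{d-1})/(\alpha_n\cdot\omega^{d-1})$, one shows that the residual $\beta_n|_{X_n}-c_n\alpha_n|_{X_n}$ tends to zero. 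Pushing forward to any fixed base model $X_0$ and using the convergences $\alpha_n\to\alpha$, $\beta_n\to\beta$ in the $b$-divisor topology, together with $c_n\to c$, we obtain $(\beta-c\alpha)_{X_0}=0$ for every $X_0$, hence $\beta=c\alpha$.

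The main obstacle is the quantitative step inside the Hodge index argument. Exact null vectors in signature $(1,n-1)$ force exact proportionality of orthogonal nef classes, but our $\alpha_n|_{X_n}$ are only \emph{asymptotically} null, so one needs a uniform bound on the residual in terms of $Q_\omega(\alpha_n,\alpha_n)$ and $Q_\omega(\alpha_n,\beta_n)$; this translates into controlling the inverse of $Q_\omega$ restricted to the orthogonal complement of $\alpha_n|_{X_n}$, a subspace on which $Q_\omega$ is negative definite but whose inverse norm degenerates as $\alpha_n|_{X_n}$ approaches the null cone. The decreasing nature of the approximation provided by Theorem~\ref{thm:approx-nef} supplies the compactness required to control this degeneration. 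A secondary technical point is that the comparison between the $b$-intersection $(\alpha\cdot\beta)_{X_0}$ and the model-wise product $\alpha_{X_0}\cdot\beta_{X_0}$ is subtle (the two may differ by a pseudo-effective correction), forcing one to work on the models $X_n$ where the approximants are Cartier, rather than on a fixed base.
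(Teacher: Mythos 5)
Your overall strategy (asymptotic Hodge index on the approximating models) is not the route the paper takes: the paper's proof is a two-line reduction, via Corollary~\ref{cor:compat-nums}, to the Hodge index theorem already established in the Hilbert-type space $\nums(\X)$ in~\cite[Theorem~3.13]{spectral}, the whole point of that space being precisely to make the Lorentzian structure of $(u,v)\mapsto (u\cdot v\cdot\om^{d-2})$ uniform over all models. Your approach could in principle be made to work, but as written it has a genuine gap exactly at the step you flag as ``the main obstacle''. The claim that ``the decreasing nature of the approximation supplies the compactness required to control this degeneration'' is unsubstantiated and, as stated, false in spirit: the classes $\alpha_n,\beta_n$ live in models $X_n$ whose Picard numbers are unbounded, the forms $Q_\om$ on $\num^1(X_n)$ are different quadratic forms on different spaces, and monotonicity of the approximants gives convergence of intersection \emph{numbers} but no compactness whatsoever for the residuals $\beta_n-c_n\alpha_n$ viewed across models. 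Nothing in your write-up converts smallness of $Q_\om(\beta_n-c_n\alpha_n,\beta_n-c_n\alpha_n)$ on $X_n$ into smallness of the pushforward in a fixed $\num^1(X_0)$.

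The missing ingredient is not compactness but a monotonicity statement: if $\pi\colon X_n\to X_0$ and $\theta\in\num^1(X_n)$ satisfies $(\theta\cdot\pi^*\om^{d-1})=0$, then $-( (\pi_*\theta)^2\cdot\om^{d-2})\le -(\theta^2\cdot\pi^*\om^{d-2})$, because writing $\theta=\pi^*\pi_*\theta+E$ with $E$ exceptional, the cross term vanishes by the projection formula and $(E^2\cdot\pi^*\om^{d-2})\le0$ by the Hodge index theorem on $X_n$. Granting this, the argument closes cleanly \emph{without} your first Khovanskii--Teissier step and without ever working in $\alpha_n^\perp$ (where, as you correctly observe, the form degenerates): normalize $\hat\alpha_n=\alpha_n/(\alpha_n\cdot\om^{d-1})$ and $\hat\beta_n=\beta_n/(\beta_n\cdot\om^{d-1})$, set $\theta_n=\hat\beta_n-\hat\alpha_n\in\om^\perp$, and note that nefness gives $-(\theta_n^2\cdot\om^{d-2})\le 2(\hat\alpha_n\cdot\hat\beta_n\cdot\om^{d-2})\to 0$ since $\alpha\cdot\beta=0$; pushing forward to a fixed $X_0$, where $Q_\om$ is negative \emph{definite} on $\om^{d-1,\perp}$ because $\om$ is ample there, yields $(\hat\beta_n)_{X_0}-(\hat\alpha_n)_{X_0}\to0$, hence $\hat\beta_{X_0}=\hat\alpha_{X_0}$ for every $X_0$. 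This pushforward monotonicity is exactly the mechanism encoded in the norm $\normsigma{\cdot}$, which is why the paper can simply quote~\cite[Theorem~3.13]{spectral}; until you supply it (or an equivalent uniform comparison), your proof does not go through.
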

We also obtain a far-reaching generalization of~\cite[Corollary~3.5]{MR2066940} for $b$-divisors, see Theorem~\ref{thm:DS} below. 
We hope that this result will find applications to the study of birational group actions on projective varieties.

\begin{center} $\diamond$ \end{center}

Let $\om$ be any Cartier $b$-divisor class determined by an ample class in $X$.
For any $k\ge 1$, let us call big any class $\alpha \in \wnum^{k}(\X)$ such that $\alpha\ge c\om^k$ for some $c>0$.
It follows from Siu's inequalities that for any two big BPF classes $\alpha, \beta\in\BPF^{k}(\X)$ there exists a constant $C>1$
such that $C^{-1} \alpha \le \beta \le C \alpha$ so that the notion of bigness does not depend on the choice of $\om$.

We prove that any big BPF class of dimension $1$ can be realized as a $(d-1)$-th power of a nef $b$-divisor.
Our  precise result reads as follows. 
\begin{restatable}{thmint}{MAmp}\label{thm:MA}

The map $\alpha \mapsto \alpha^{d-1}$ induces a bijection from the
convex cone of big $b$-divisor classes $\alpha\in\Nef(\X)$  onto the convex cone
of big $b$-numerical curve classes in $\BPF^{d-1}(\X)$.
\end{restatable}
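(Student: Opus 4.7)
The plan is to establish three items in turn: that $\alpha\mapsto\alpha^{d-1}$ is well-defined into the big BPF curve classes, that it is injective via Khovanskii--Teissier together with the Hodge index theorem, and that it is surjective by a variational argument.

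Well-definedness is immediate. For any $\alpha\in\Nef(\X)$ with $\alpha\ge c\om$, the class $\alpha^{d-1}\in\BPF^{d-1}(\X)$ is defined by the generalized intersection product constructed earlier in the paper from Theorem~\ref{thm:approx-nef}. Monotonicity of that product on the nef cone yields $\alpha^{d-1}\ge c^{d-1}\om^{d-1}$, which is precisely bigness in $\BPF^{d-1}(\X)$.

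For injectivity, assume $\alpha,\beta\in\Nef(\X)$ are big with $\alpha^{d-1}=\beta^{d-1}$, so that $\alpha^d=\alpha\cdot\beta^{d-1}$ and $\beta^d=\alpha^{d-1}\cdot\beta$. The two Khovanskii--Teissier inequalities for nef $b$-divisors, obtained by approximation from each birational model via Theorem~\ref{thm:approx-nef}, read $(\alpha\cdot\beta^{d-1})^d\ge\alpha^d(\beta^d)^{d-1}$ and $(\alpha^{d-1}\cdot\beta)^d\ge(\alpha^d)^{d-1}\beta^d$; combined with the two identities they force $\alpha^d=\beta^d$, and then log-concavity of $k\mapsto\alpha^{d-k}\beta^k$ forces all the mixed intersections to coincide. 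In particular $\alpha^{d-1}\cdot(\alpha-\beta)=0$ and $\alpha^{d-2}\cdot(\alpha-\beta)^2=0$, so Theorem~\ref{thm:Hodge-index} yields $\alpha=\beta$.

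For surjectivity, fix $\gamma\in\BPF^{d-1}(\X)$ big, so $\gamma\ge c\om^{d-1}$, and consider the set $K=\{\alpha\in\Nef(\X):\alpha\cdot\gamma\le 1\}$. From $\alpha\cdot\om^{d-1}\le c^{-1}$ on $K$ and the fact that bounded subsets of $\wnum^1(\X)$ are relatively compact, $K$ is compact. The continuous functional $\alpha\mapsto\alpha^d$ attains a strictly positive maximum on $K$ (take any small positive multiple of $\om$) at some $\alpha^*$. A first-order (Lagrange multiplier) argument in the convex cone $\Nef(\X)$ produces $\lambda>0$ with $d(\alpha^*)^{d-1}=\lambda\gamma$ in $\BPF^{d-1}(\X)$; rescaling by $(d/\lambda)^{1/(d-1)}$ then yields a nef $b$-divisor $\alpha$ with $\alpha^{d-1}=\gamma$, and bigness of $\alpha$ follows from that of $\gamma=\alpha^{d-1}$ by Siu-type inequalities transferred through Theorem~\ref{thm:approx-nef}.

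The hard part will be the first-order optimality step. The generalized intersection product on $\Nef(\X)$ is defined as an infimum over Cartier approximations, so its directional derivatives must be extracted from that procedure; in addition one needs the optimizer $\alpha^*$ to lie in the interior of the nef cone, i.e. to be big, so that the Lagrange condition is an identity of curve classes in $\BPF^{d-1}(\X)$ rather than merely an inequality against nef test directions. Both issues should ultimately reduce to the monotonicity and continuity of the generalized product together with Siu-style estimates, but they constitute the main technical content of the argument.
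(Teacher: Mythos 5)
Your overall strategy coincides with the paper's: the same variational scheme (maximize $(\alpha^d)$ over a weakly compact normalized slice of $\Nef(\X)$, then extract a first-order condition), well-definedness by monotonicity, and injectivity via Khovanskii--Teissier. But the step you yourself flag as ``the hard part'' is a genuine gap, and it is exactly where the substance of the proof lies. The resolution is \emph{not} to differentiate the generalized intersection product along rays inside $\Nef(\X)$: the perturbation $\alpha_\star + t\beta$ by an arbitrary Cartier test class $\beta$ leaves the nef cone, and the product is only upper semicontinuous in general. Instead one perturbs inside the pseudo-effective cone: for $|t|$ small, $\alpha_\star + t\beta$ is psef by Siu's inequality, one passes to its nef envelope $P(\alpha_\star + t\beta)$, and one invokes the differentiability of the volume function (\cite[Corollary~3.4]{boucksom_favre_jonsson_volume}, transported to nef $b$-classes by decreasing Cartier approximation), which gives $P(\alpha_\star+ t\beta)^{d}\ge (\alpha_\star^{d}) + d t \left(\alpha_\star^{d-1}\cdot \beta\right) - C t^2$. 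Renormalizing $P(\alpha_\star+t\beta)$ so that it lies in $K$ and comparing with the maximality of $(\alpha_\star^d)$ then yields $(\alpha_\star^{d-1}\cdot\beta)=(\beta\cdot\gamma)(\alpha_\star^d)$ for every Cartier $\beta$ (apply the inequality to $\pm\beta$), which is the Lagrange condition you want. Your worry about the optimizer lying ``in the interior of the nef cone'' is a red herring: all that is needed is $(\alpha_\star^d)>0$, which holds because $K$ contains a positive multiple of $\om$, and bigness of $\alpha_\star$ then follows from Corollary~\ref{cor:Siu}. Without the envelope-plus-differentiability mechanism the first-order step does not go through, so as written the surjectivity argument is incomplete.

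A second, smaller issue: in the injectivity argument you cannot conclude by citing Theorem~\ref{thm:Hodge-index}, which concerns two \emph{nef} classes whose product vanishes; $\alpha-\beta$ is not nef, and knowing that the quadratic form $q_{\alpha^{d-2}}$ vanishes at $\alpha-\beta$ on the hyperplane $(\alpha^{d-1})^{\perp}$ only places $\alpha-\beta$ in the kernel of the restricted form (Proposition~\ref{prop:Hodge-Riemann}), not at $0$. The paper closes this step via the equality case of Khovanskii--Teissier, i.e.\ Proposition~\ref{prop:KT}, whose proof rests on the Diskant inequality~\eqref{eq:Diskant}: after normalizing $(\alpha^d)=(\beta^d)=(\alpha^{d-1}\cdot\beta)=1$, Diskant forces $s\ge1$ for the largest $s$ with $\alpha-s\beta\ge0$, hence $\alpha\ge\beta$, and by symmetry $\alpha=\beta$. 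The ingredient you need here is therefore Diskant, not the two-nef-classes Hodge index statement.
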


Our proof is variational in nature, and follows the same line of arguments as in~\cite{boucksom_favre_jonsson_monge_ampere} (whose proof is
itself inspired by~\cite{alexandrov} and~\cite{berman_boucksom_guedj_zeriahi}). A key point in the proof is the differentiability of the volume function, see~\cite{boucksom_favre_jonsson_volume,MR2571958,MR3981985}. Over a toric variety, a big curve (torus-invariant) class can be identified to 
a positive measure on a sphere, and our statement reduces in that case to Minkowski's theorem which states the existence of a convex body with a prescribed surface area measure~\cite[Theorem~8.2.2]{schneider_convex}.

Recently several decompositions of sufficiently positive curve classes on projective varieties have been introduced~\cite{MATH06656980,fulger_lehmann_zariski,lehmann_xiao_positivity}
which generalize the Zariski decomposition of psef divisors. Theorem~\ref{thm:MA} gives a new perspective on these decompositions as we now explain.

First Lehmann and Xiao~\cite{MATH06656980} have
shown that for any big curve class $\alpha \in \num^{d-1}(X)$ there exists a unique nef class $\LX(\alpha) \in \num^1(X)$
such that 
\[
\alpha \ge \LX(\alpha)^{d-1} \text{ and } \LX(\alpha) \cdot \left(\alpha - \LX(\alpha)^{d-1}\right) =0~.
\]
We prove (Theorem~\ref{thm:relate-to-LX} ) that for any big BPF class $\alpha$ 
the family of nef Cartier classes $\LX(\alpha_{X'})$ where $X'$ ranges over all models over $X$ \emph{converges} to the unique nef class $\gamma$
solving $\gamma^{d-1} = \alpha$. 
As a consequence of~\cite[Theorem~5.29]{MATH06656980},
we also obtain a characterization of big curve classes in terms of the  functional $\hvol$
that was introduced by Xiao in~\cite{zbMATH06798253} (Theorem~\ref{thm:char-of-bigness}).

We then discuss a second decomposition based on the functional $\fM$ defined on the cone of movable curve classes and introduced by Xiao.
We prove (Theorem~\ref{thm:LX-Mfnt}) yet
another convergence result $\Mv(\alpha_{X'})\to \gamma$ where $\Mv(\alpha_{X'})\in\num^1(X')$ is the movable divisor class 
computing $\fM(\alpha_{X'})$ given by~\cite[Theorem~3.14]{lehmann_xiao_positivity}.

\begin{center} $\diamond$ \end{center}

The intersection theory of $b$-divisors and the techniques  presented in this paper lead to a better understanding of the structure of the space of $b$-classes. 
The most natural norm on $\cnum^k(\X)$ is arguably defined by $\normom{\alpha}:= \inf \{ C>0, \, -C \om^k \le \alpha \le C \alpha\}$, and we may 
thus look at the Banach space  $\num^k_\om(\X)$ obtained as the completion of $\cnum^k(\X)$ with respect to this norm.

In a previous paper \cite{spectral}, we also introduced other Banach spaces  $\numbpf{k}(\X), \numbpfs{k}(\X) \subset \wnum^{k}(\X)$ for $0\le k\le d$ and  $\nums(\X)  \subset  \wnum^{1}(\X)$
which played a crucial role in our approach to analyze the degree growth of rational self-maps. 

First $\numbpf{k}(\X)$ is obtained as the completion of the space of Cartier $b$-divisor classes for the norm induced by 
the BPF cone. The space $\numbpfs{k}(\X)$ is similarly defined as the completion of  $\cnum^{k}(\X)$ for the dual norm induced by the BPF cone. 

Let us review briefly the definition of $\nums(\X)$ refereeing to \S\ref{sec:3Banach} below for more details.
When $X$ is a surface, then $\nums(\X)$ is the Picard-Manin space introduced by S. Cantat~\cite{cantat_bir_surfaces} and~\cite{boucksom_favre_jonsson_deggrowth}. 
It is a Hilbert space which is equipped with a natural intersection form  $\langle \cdot, \cdot \rangle$ of Minkowski's type for which Hodge index theorem extends naturally.
It contains $\numbpf{1}(\X)$ and is contained in $\numbpfs{1}(\X)$, and its importance stems from the fact that birational surface maps 
induce isometries for the pairing $\langle \cdot, \cdot \rangle$.

In higher dimension, $\nums(\X)$ is defined as the completion of $\cnum^{1}(\X)$ with respect to a family of semi-norms obtained by restricting classes to 
sufficiently general surfaces in birational models of $X$. In~\cite{spectral}, we proved that a version of the Hodge index theorem still holds in this Banach space.

\smallskip

We use Theorem~\ref{thm:approx-nef} above and a version of Diskant inequalities for $b$-divisors extending~\cite[Theorem~F]{boucksom_favre_jonsson_volume} to prove the following inclusions.

\begin{thmint} \label{thmint_injections}  One has the following sequence of continuous injections:
\begin{equation}\label{eq:inclusion}
\numbpf{1}(\X) \hookrightarrow \Vect(\Nef(\X)) \hookrightarrow \nums(\X) \hookrightarrow \num^1_\om(\X) =\numbpfs{1}(\X)\hookrightarrow \wnum^1(\X). 
\end{equation}
\end{thmint}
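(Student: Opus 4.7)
The plan is to verify each of the four injections and the middle equality in turn, leveraging Theorem~\ref{thm:approx-nef} to pass between Cartier completions and limits of nef classes, together with Diskant-type inequalities to compare intersection-based norms.

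For $\numbpf{1}(\X) \hookrightarrow \Vect(\Nef(\X))$, the key observation is that $\BPF^1 = \Nef$, so the $\normbpf{\cdot}$-norm on $\cnum^1(\X)$ can be computed via decompositions $\alpha = \alpha^+ - \alpha^-$ into nef Cartier classes. A Cauchy sequence $(\alpha_n)$ for $\normbpf{\cdot}$ therefore produces two Cauchy sequences $(\alpha_n^\pm)$ of nef Cartier $b$-classes, and Theorem~\ref{thm:approx-nef} constructs the nef limits $\alpha^\pm \in \Nef(\X)$, placing the limit $\alpha^+ - \alpha^-$ inside $\Vect(\Nef(\X))$. Continuity and injectivity are then formal.

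The central step is $\Vect(\Nef(\X)) \hookrightarrow \nums(\X)$. I would first reduce to showing that any nef $b$-divisor $\alpha$ defines an element of $\nums(\X)$, by approximating it by a decreasing sequence of nef Cartier classes $\alpha_n \downarrow \alpha$ via Theorem~\ref{thm:approx-nef}. To show that $(\alpha_n)$ is Cauchy for the semi-norms defining $\nums(\X)$, I would apply the Hodge index theorem on sufficiently general surfaces $S$ cut out by $\omega^{d-2}$ in each birational model: on such $S$ one obtains a bound of the shape $\| (\alpha_n - \alpha_m)|_S \|^2 \lesssim (\alpha_n - \alpha_m)\cdot\omega^{d-1}$, whose right-hand side tends to zero since the sequence is decreasing with bounded intersection against $\omega^{d-1}$. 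The main obstacle is to make this estimate uniform across all models and surfaces, which demands a $b$-divisor version of Diskant's inequality extending~\cite[Theorem~F]{boucksom_favre_jonsson_volume}; I would establish it by a direct variational argument combining Theorems~\ref{thm:approx-nef} and~\ref{thm:Hodge-index}.

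For the remaining steps, the equality $\num^1_\om(\X) = \numbpfs{1}(\X)$ reduces to showing that $\normom{\cdot}$ and $\normbpfs{\cdot}$ are equivalent on $\cnum^1(\X)$: one direction follows from the fact that $-C\om \le \alpha \le C\om$ controls all intersections with Cartier BPF $(d-1)$-classes through a single dual pairing with $\om$, while the other direction uses Lehmann's duality for $k=1$ (cited in Conjecture~\ref{conj:dual-BPF}) together with Siu's inequality to compare an arbitrary BPF $(d-1)$-class with a multiple of $\om^{d-1}$. The injection $\nums(\X) \hookrightarrow \num^1_\om(\X)$ then follows from Cauchy--Schwarz applied to the Minkowski-type pairing on $\nums(\X)$ with $\om$ as time-like reference, and the final injection $\numbpfs{1}(\X) \hookrightarrow \wnum^1(\X)$ is essentially tautological, since control of intersections against all Cartier BPF $(d-1)$-classes projects to boundedness in every $\NS(X')$.
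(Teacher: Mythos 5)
Your overall architecture (check each arrow separately; reduce the middle step to showing $\Nef(\X)\subset\nums(\X)$ via decreasing Cartier approximants and a Diskant-type inequality; prove the equality $\num^1_\om(\X)=\numbpfs{1}(\X)$ by the duality between pseudo-effective divisors and movable curves) matches the paper, and the first, last and "equality" steps are essentially right. But there are two genuine problems.

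First, for $\Vect(\Nef(\X))\hookrightarrow\nums(\X)$ you correctly identify that a $b$-divisor Diskant inequality is the missing uniform input, but your proposed derivation of it "by combining Theorems~\ref{thm:approx-nef} and~\ref{thm:Hodge-index}" is circular: in the paper Theorem~\ref{thm:Hodge-index} is \emph{deduced} from the inclusion $\Nef(\X)\subset\nums(\X)$ (via Corollaries~\ref{cor:nef_in_nums} and~\ref{cor:compat-nums}), and Theorem~\ref{thm:approx-nef} in its sequential form is itself a consequence of Diskant. The non-circular route is: net approximation (Theorem~\ref{thm:approx2}) $\Rightarrow$ intersection product continuous along decreasing nets (Theorem~\ref{thm:intersection}) $\Rightarrow$ Diskant~\eqref{eq:Diskant} by passing to the limit from the Cartier case of \cite[Theorem~F]{boucksom_favre_jonsson_volume} $\Rightarrow$ multiplicative approximation $(1-\epsilon)\alpha'\le\alpha\le\alpha'$ (Corollary~\ref{cor_best_approx}). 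It is this multiplicative control, not the additive decrease $\alpha_n\downarrow\alpha$, that makes $\normsigma{\alpha_n-\alpha_m}^2$ small uniformly over all $\gamma\in\cBPF^{d-2}(\X)$; your pointwise bound $\|(\alpha_n-\alpha_m)|_S\|^2\lesssim(\alpha_n-\alpha_m)\cdot\om^{d-1}$ has no uniform constant over all surfaces in all models without it.

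Second, the injection $\nums(\X)\hookrightarrow\num^1_\om(\X)=\numbpfs{1}(\X)$ does not follow from "Cauchy--Schwarz with $\om$ as time-like reference". The seminorms defining $\nums(\X)$ only control pairings $(\alpha\cdot\om\cdot\gamma')$ with $\gamma'\in\cBPF^{d-2}(\X)$, i.e.\ pairings against curve classes of the special form $\om\cdot\gamma'$, whereas $\normbpfs{\alpha}$ requires controlling $(\alpha\cdot\gamma)$ for \emph{every} $\gamma\in\cBPF^{d-1}(\X)$. The missing idea is Theorem~\ref{thm:MA}: one writes $\gamma+t\om^{d-1}=\delta_t^{d-1}$ for a nef $b$-divisor $\delta_t$, and only then does the generalized Cauchy--Schwarz inequality of \cite{spectral} yield $|(\alpha\cdot\gamma)|\le 5(\gamma\cdot\om^{d-1})\normsigma{\alpha}$ (Theorem~\ref{thm:nums-bpfs}). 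So the resolution of the Minkowski problem is a load-bearing ingredient of this arrow, not an afterthought, and your sketch omits it.
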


The first and last injections were proved in \cite[Proposition 2.10]{spectral} whereas the equality $\num^1_\om(\X) =\numbpfs{1}(\X)$ and the two injections $\Vect(\Nef(\X)) \hookrightarrow \nums(\X) \hookrightarrow \numbpfs{1}(\X)$ are new. 
\begin{conj}
For any projective manifold $X$ of dimension at least $2$, all inclusions in~\eqref{eq:inclusion} are strict.
\end{conj}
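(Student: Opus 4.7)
The plan is to establish each of the four strict inclusions in~\eqref{eq:inclusion} by exhibiting explicit separating elements. I concentrate on the surface case $d=2$, where $\nums(\X)$ coincides with the Picard-Manin Hilbert space of Cantat and Boucksom-Favre-Jonsson and the geometry of infinitely many blow-ups is very concrete; strictness in dimension $d\ge 3$ should then follow by restricting to a general surface in some birational model, using the very definition of $\nums(\X)$ in terms of surface restrictions.

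For the two outer inclusions the strategy is direct. To separate $\numbpfs{1}(\X)=\num^1_\om(\X)$ from $\wnum^1(\X)$ it suffices to produce any Weil $b$-divisor class whose $\om$-norm on some sequence of models tends to infinity: take a tower of blow-ups at pairwise distinct points $\ldots\to X_n\to\ldots\to X$ and, using the natural exceptional $b$-classes $e_n$ attached to each $E_n$, form $\alpha=\sum_n n\,e_n$. For $\nums(\X)\subsetneq\num^1_\om(\X)$ the same exceptional classes work: on a surface they form an orthogonal family in the Picard-Manin structure of $\nums(\X)$ with self-intersection $-1$, and $\alpha=\sum\lambda_n e_n$ with $(\lambda_n)\in\ell^2\setminus\ell^1$ of bounded sup norm has finite $\om$-norm but infinite $\nums$-norm.

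The two inner inclusions are more delicate. For $\Vect(\Nef(\X))\subsetneq\nums(\X)$ the guiding principle is that any element of $\Vect(\Nef(\X))$ should admit a Jordan-type decomposition $\alpha=\beta^+-\beta^-$ with $\beta^\pm$ nef, which imposes a monotonicity (subharmonicity-like) constraint on the multiplicities along any blow-up tower. An $\ell^2$-combination of exceptional classes with alternating signs and infinite total variation would then lie in $\nums(\X)$ but not in the real span of the nef cone. For $\numbpf{1}(\X)\subsetneq\Vect(\Nef(\X))$ the natural candidate is a nef $b$-divisor of infinite BPF-norm: for instance the canonical invariant $b$-class associated to an algebraically stable rational self-map of small topological degree, or the invariant class of a loxodromic automorphism of a projective K3 surface. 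Such a class is nef by Theorem~\ref{thm:approx-nef}, but its Cartier approximations live on arbitrarily deep blow-ups and one expects the BPF-gauge to diverge while the $\om$-gauge remains bounded.

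The main obstacle is the step $\Vect(\Nef(\X))\subsetneq\nums(\X)$. Proving that a specific alternating $\ell^2$-combination admits no decomposition as a difference of two nef $b$-divisors requires a structural theorem for $\Nef(\X)$ of Jordan type on infinitely many infinitely near points, which is not presently in the literature. I would attempt it via the Hodge index theorem (Theorem~\ref{thm:Hodge-index}) and the Diskant-type inequalities already exploited in the proof of Theorem~\ref{thmint_injections}: a hypothetical decomposition $\alpha=\beta^+-\beta^-$, combined with the orthogonality of the $e_n$ in $\nums(\X)$, ought to force each $\beta^\pm$ to have absolutely summable coefficients on the $e_n$, contradicting the choice of $\alpha$. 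Making this quantitative, and in particular ruling out the possibility that $\beta^\pm$ acquire compensating continuous components supported on divisorial valuations not of the form $e_n$, is where I expect the real difficulty to lie.
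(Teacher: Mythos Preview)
The statement you are attempting to prove is labelled a \emph{Conjecture} in the paper, not a theorem. The paper does not supply a proof; immediately after stating Theorem~\ref{thmint_injections} the authors explicitly leave the strictness of all four inclusions open. There is therefore no ``paper's own proof'' to compare your proposal against.

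Your proposal is, as you yourself acknowledge, a sketch with substantial gaps rather than a proof. A few specific remarks. For the separation $\nums(\X)\subsetneq\num^1_\om(\X)$ your candidate is backwards: you write $(\lambda_n)\in\ell^2\setminus\ell^1$, but on a surface the Picard--Manin norm \emph{is} the $\ell^2$-norm on the exceptional components, so any $\ell^2$ sequence lies in $\nums(\X)$. You need a class with finite $\normom{\cdot}$-norm and \emph{infinite} $\ell^2$-norm, i.e.\ coefficients outside $\ell^2$; whether such a class can lie in $\num^1_\om(\X)$ requires understanding what the condition $-C\om\le\alpha\le C\om$ actually imposes on the exceptional coefficients, which is not obvious. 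For $\numbpf{1}(\X)\subsetneq\Vect(\Nef(\X))$, note that by Lemma~\ref{lem_straightforward} every nef $b$-class has finite $\normbpf{\cdot}$-norm equal to $(\alpha\cdot\om^{d-1})$, so your plan of finding a nef class of ``infinite BPF-norm'' cannot work as stated; the issue is whether the \emph{completion} $\numbpf{1}(\X)$ captures all of $\Vect(\Nef(\X))$, which is a subtler question about Cauchy sequences, not about the norm being finite. Finally, for $\Vect(\Nef(\X))\subsetneq\nums(\X)$ you correctly identify this as the hardest step and honestly flag the obstruction: ruling out decompositions $\alpha=\beta^+-\beta^-$ where $\beta^\pm$ carry mass on valuations other than your chosen $e_n$ is genuinely difficult and, as the paper's formulation as a conjecture indicates, currently open.
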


The injection $\nums(\X) \hookrightarrow  \numbpfs{1}(\X)$ can be viewed as an analog of the Sobolev injection of $W^{1,2}([0,1])$ into $\mathcal{C}^0([0,1])$.
This analogy takes its root in the context of toric geometry, and we plan to discuss these links in detail in a forthcoming work.

\begin{center} $\diamond$ \end{center}
Let us discuss some of the restrictions of our approach. As in our companion paper \cite{spectral}, we suppose $K$ to be of characteristic $0$.
We use the assumption on the characteristic for the existence of smooth models
and for the uniform generation of multiplier ideals which eventually relies on Kodaira vanishing theorem. We expect though that our results extend verbatim to positive
characteristics with multiplier ideals being replaced by test ideals as in~\cite{MR4018262}. A first step in that direction has been taken by S. D. Cutkosky who proved Diskant 
inequalities for envelopes for big movable divisors, see equation (30) in the proof of Proposition 5.5 in~\cite{cutkosky-preprint}. In the same article, a characterization of the equality case in Minkowski's inequalities is also obtained in arbitrary characteristic. 

\medskip

 It is very tempting to extend our results to compact K\"ahler manifolds. In this case, the natural objects to consider
are projective limits of Dolbeault cohomology spaces $H^{k,k}_\R(X')$ where $\mu\colon X'\to X$ ranges over all 
bimeromorphic proper holomorphic maps.
Observe that one obtains a space which is in general much bigger than $\wnum^k(\X)$ even when $X$ is projective as one needs to deal with potentially transcendental classes.

The notion of volume and the Zariski decomposition of big $(1,1)$-cohomological classes (which in the projective case play important roles in our approach) 
have been defined by Boucksom~\cite{MR2050205} and further characterized by DiNezza-Floris-Trapani~\cite{MR3752530} (see also \cite{nakayama,darvas_chinh_eleonora,tosatti}). 
Even so, we face a  serious obstacle in extending our results
since the differentiability of volume function is not known for arbitrary K\"ahler manifolds. This property is equivalent to the so-called
transcendental Morse inequalities (that we call Siu's inequalities here) by an argument of Xiao, see also~\cite[Appendix~A]{MR3981985}.
Much progress have been made recently towards the proof of these inequalities (see~\cite{MR3449182,MR3541848,MR3451400,MR3441529,MR3752527})
but they remain open in general. Note that the duality between the movable curves and pseudo-effective classes is also very much related to these problems,
and is not known except in the projective case (see~\cite{MR3981985}).

\medskip

It would be interesting to investigate analogs of Theorem~\ref{thm:approx-nef} for a class $\alpha\in\BPF^k(\X)$ with $k\ge2$.
The work of Dinh and Sibony~\cite{MR2119243} on the regularization of positive closed currents 
suggests that 
one might expect the existence of a sequence $\alpha_n^\pm \in \cBPF^k(\X)$ such that  $\alpha = \lim_n (\alpha_n^+ - \alpha_n^-)$, and 
$\sup_n (\alpha^\pm_n\cdot\om^{d-k})\le C\,  (\alpha \cdot\om^{d-k})$ for some universal $C>0$.

One might also want to generalize Theorem~\ref{thm:MA} and try to solve the equation
$(\alpha^k\cdot\om^{d-k-1}) =\theta$ where $\om\in\Nef(\X)$ and $\theta\in\BPF^{d-1}(\X)$ are fixed and $\alpha$ lie in an appropriate subspace of numerical divisor $b$-classes.
This would be analogous to solving mixed Monge-Ampere equations in the complex domain (see~\cite{MR3219505,MR3436233,subsolution,subsolution2} and the references therein
for recent developments on this problem).

\medskip

Finally one can look at relative situations where $X$ is a (non-necessarily Noetherian) scheme, $Z$ is a strict subscheme of $X$, and 
consider all proper morphisms $X' \to X$ that are isomorphisms over $X\setminus Z$. 
A case of interest arises when $X$ is a flat projective scheme over $\spec(K^{\circ})$ where
$K^\circ$ is the ring of integers of a complete non-Archimedean metrized field. Then Theorem~\ref{thm:approx-nef} follows
from the so-called continuity of envelopes, a key problem in the development of non-Archimedean pluripotential theory.
We refer to~\cite{MR3419957,trivial-valued,global-BJ,MR4018262} for a detailed discussion of this important problem.

\subsection*{Acknowledgements}
We would like to thank S\'ebastien Boucksom and  Mattias Jonsson
for their comments and for letting us include in  \S\ref{sec:not-Cartier} an example of a BPF Cartier curve class whose root is not Cartier.
We also thank Ana Maria Botero and Jian Xiao for their numerous remarks on a first version of this paper, and Brian Lehmann for sharing his insights on
the decomposition of numerical classes.
We express our gratitude to the anonymous referee for his/her careful reading and his/her thoughtful suggestions.


\section{Basics on $b$-classes}

In this section, we review briefly some notions on $b$-classes following the discussion in~\cite{spectral}, see also~\cite{boucksom_favre_jonsson_deggrowth,MR2426355,MR3010168}.
We fix a smooth projective variety $X$ of dimension $d$ defined over a field $\K$ of characteristic $0$.

\subsection{Numerical classes of cycles}\label{sec:numerical cycles}
Let $Z^k(X)$ be the $\R$-vector space freely generated by irreducible subvarieties of pure codimension $k$ in $X$. 
Given any two cycles $\alpha,\beta$ of complementary dimension in  $X$, 
we denote by $(\alpha \cdot \beta )\in \R$ their intersection number as defined in \cite{fulton}. 

The numerical space of cycles of codimension $k$, denoted $\num^k(X)$ is defined as the quotient of $Z^k(X)$ by the 
vector space of cycles $z$ such that $(\alpha \cdot z) = 0$ for all cycle $\alpha$ of dimension $k$. It is a finite dimensional $\R$-vector space, and the pairing 
$\num^k(X) \times \num^{d-k}(X) \to \mathbb{R}$ is perfect.

The space $\num^1(X)$ is the tensor product of the Neron-Severi group of $X$ with $\R$. 
Intersection products of $k$ divisors define numerical cycles of codimension $k$ but these classes do not span $\num^k(X)$ in general.

\medskip

A class in $\num^k(X)$  is called pseudo-effective, if it belongs to the closure of the convex cone spanned by effective cycles. 
When $\alpha\in \num^k(X)$ is pseudo-effective, we write $\alpha\ge0$.
The set of pseudo-effective classes forms a closed salient convex cone $\psef^k(X)$ inside $\num^k(X)$ with non-empty interior. 

\medskip

A class lying in the interior of the cone of pseudo-effective classes in $\num^1(X)$ is said to be big. An ample class $\om$ is big. 
A class $\alpha\in\num^1(X)$ is big iff one can find $\epsilon>0$ such that $\alpha \ge \epsilon \om$.

A class $\alpha \in \num^1(X)$ is nef if its intersection with any pseudo-effective curve class is non-negative.
The nef cone $\nef^1(X)$ is the interior of the cone or real ample classes. A nef class $\alpha$ is big iff $(\alpha^d)>0$.

The negativity lemma, see e.g~\cite[Lemma 4.16]{fujino} states that for any proper birational  morphism $\pi\colon X \to Y$ and any nef class $\alpha\in\nef^1(X)$, we have 
$\pi^* (\pi_*\alpha) \ge \alpha$.

\medskip

 A class $\alpha\in\num^k(X)$ with  $k\ge2$ is called strongly basepoint free if it is the pushforward under a flat proper morphism of relative dimension $e$ 
 of the intersection of $k+e$ ample divisors\footnote{This definition is equivalent of the original one given in~\cite{MR3592463}, see~\cite[Corollary 3.3.4]{dang_normal}}. 
The closure of the cone generated by strongly basepoint free classes is called the basepoint free cone of codimension $k$. We denote it by 
$\BPF^k(X)$. Any BPF class is both pseudo-effective and nef, and the pull-back of BPF classes by a proper morphism remains BPF.

\medskip

 A class $\alpha\in\num^k(X)$ is big iff  one can find $\epsilon>0$ such that $\alpha \ge \epsilon \om^k$ for some ample class $\om$.
 A class is bif iff it belongs to the interior of the pseudoeffective cone, see~\cite[Lemma~2.12]{MR3592463}.
  
\medskip

The image under a proper birational map of a BPF class is not BPF in general. This motivates the introduction of the notion of "movable" classes. 
We discuss here only the case of dimension and codimension $1$, leaving the general case to \S\ref{sec:movable}.

 We say that an integral class $\alpha = c_1(L) \in \num^{1}(X)$ is strongly movable if the base locus of a multiple of the line bundle $L\to X$ has codimension $\ge2$. 
 A class  $\alpha \in \num^{1}(X)$ is movable if it lies in the closure of the cone generated by strongly movable integral classes.

The movable cone in $ \num^{d-1}(X)$ is the smallest convex cone which contains
all classes $\alpha \in \num^{d-1}(X)$ for which
 there exists a birational proper morphism $\pi\colon X \to Y$, and real ample  classes $\beta_1, \cdots,  \beta_{d-1} \in \num^{d-1}(Y)$ such that $\alpha= \pi_*( \beta_1\cdot \ldots \cdot \beta_{d-1})$.
 By \cite[Corollary 2.5]{boucksom_demailly_paun_peternell},  a class $\alpha\in\num^{d-1}(X)$ is movable iff $(\alpha \cdot \gamma)\ge 0$ for all nef divisor classes $\gamma\in\num^{1}(X)$.

 \medskip

We now present a key estimate, called Siu's inequalities, which allows one to compare a BPF class with a complete intersection class. We refer to~\cite[Proposition~3.4.6]{dang_normal} for a proof.

\begin{prop} \label{prop_Siu}
There exists a constant $C_d>0$ such that for all classes $\alpha \in \BPF^k(X)$ and all big and nef divisors $\beta$ on $X$, one has: 
\begin{equation}
 \alpha \leq C_d \dfrac{(\alpha \cdot \beta^{d-k})}{(\beta^d)} \beta^k.
 \end{equation} 
\end{prop}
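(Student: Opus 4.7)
The plan is to reduce the codimension-$k$ estimate to the classical divisorial Siu inequality via Bertini cuts on a flat cover of $X$.

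Step 1 (Bertini reduction). By density of strongly basepoint-free classes in $\BPF^k(X)$ together with continuity of the intersection pairing and the pseudo-effective cone, it suffices to establish the inequality for $\alpha = f_*(A_1\cdots A_{k+e})$ where $f\colon Y\to X$ is flat proper of relative dimension $e$ and the $A_i$ are very ample divisors on $Y$. Choosing general sections $s_i\in|A_i|$ for $i=1,\ldots,e$ and setting $W := V(s_1)\cap\cdots\cap V(s_e)\subset Y$, iterated Bertini combined with the base-point-freeness of each $A_i$ on the fibers of $f$ shows that $W$ is smooth of dimension $d$ and $g := f|_W\colon W\to X$ is proper, surjective and generically finite of some degree $n\ge 1$. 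The projection formula then yields $\alpha = g_*(B_1\cdots B_k)$ with $B_j := A_{e+j}|_W$ ample on $W$, and $g^*\beta$ is big and nef on $W$ since $g$ is generically finite onto $X$.

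Step 2 (Codimension-one Siu and pushforward). The divisorial Siu inequality (applied to $\lambda g^*\beta - B_i$ on $W$) gives $B_i \leq c_i\, g^*\beta$ in $\psef^1(W)$, where $c_i := d\,(B_i\cdot(g^*\beta)^{d-1})/((g^*\beta)^d)$. Iterating these pseudo-effective inequalities telescopically against the remaining nef $B_j$'s yields $B_1\cdots B_k \leq (c_1\cdots c_k)(g^*\beta)^k$ in $\psef^k(W)$. Pushing forward through $g_*$ and using the projection formula identities $g_*((g^*\beta)^k)=n\beta^k$, $(g^*\beta)^d=n(\beta^d)$, and $g_*(B_1\cdots B_k\cdot(g^*\beta)^{d-k}) = \alpha\cdot\beta^{d-k}$, the degree $n$ cancels and we arrive at
\[
\alpha \;\leq\; d^k\cdot\frac{\prod_i(B_i\cdot(g^*\beta)^{d-1})}{((g^*\beta)^d)^{k-1}(B_1\cdots B_k\cdot(g^*\beta)^{d-k})}\cdot\frac{(\alpha\cdot\beta^{d-k})}{(\beta^d)}\,\beta^k
\]
in $\psef^k(X)$.

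The main obstacle is the final calibration of the constant: one must bound the dimensionless ratio on $W$ appearing in the displayed inequality by a universal constant $K_d$ depending only on $d$. The naive Khovanskii--Teissier inequality does not suffice here (as the example $B_1=B_2=h_1$, $g^*\beta = h_1+h_2$ on $(\mathbb{P}^m)^2$ shows, the ratio can exceed one), but iterated applications of the quadratic Alexandrov--Fenchel inequality for nef divisors on the $d$-dimensional smooth projective variety $W$ produce such a bound $K_d$. Combined with Step 2 this gives the estimate with $C_d := d^k\, K_d$, a constant depending only on $d$.
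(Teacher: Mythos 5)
First, a remark on the comparison: the paper does not actually prove this proposition — it simply cites \cite[Proposition~3.4.6]{dang_normal} — so there is no internal argument to match yours against. Your Steps 1 and 2 are essentially sound (the Bertini reduction to $\alpha=g_*(B_1\cdots B_k)$ with $g$ generically finite, and the telescoping of the divisorial Siu inequality against the nef classes $B_j$, both work, modulo the minor point that $Y$, hence $W$, need not be smooth), and the displayed inequality you reach is correct. The fatal gap is exactly where you flag ``the main obstacle'': the dimensionless ratio
\[
R\;=\;\frac{\prod_{i=1}^k\bigl(B_i\cdot H^{d-1}\bigr)}{(H^d)^{k-1}\,\bigl(B_1\cdots B_k\cdot H^{d-k}\bigr)},\qquad H:=g^*\beta,
\]
is \emph{not} bounded by any constant depending only on $d$, so no calibration is possible. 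Take $W=X=(\Pp^1)^3$, $g=\mathrm{id}$, $k=2$, $B_1=B_2=h_2+\delta(h_1+h_3)$ (ample) and $H=\beta=h_1+h_3+\epsilon h_2$ (ample). Then $(H^3)=6\epsilon$, $(B_i\cdot H^{2})=2+4\epsilon\delta$ and $(B_1\cdot B_2\cdot H)=4\delta+2\delta^{2}\epsilon$, so $R\sim 1/(6\epsilon\delta)\to\infty$ as $\epsilon,\delta\to0$, whereas the proposition itself holds on $(\Pp^1)^3$ with a small explicit constant (the psef cone of curves there is the simplicial cone on the $h_ih_j$, and one checks the inequality coefficientwise). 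Hence your chain of inequalities, though true, does not imply the statement.

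No Alexandrov--Fenchel input can repair this, because every Khovanskii--Teissier-type inequality points the wrong way. You need an \emph{upper} bound for the product of linear terms $\prod_i(B_i\cdot H^{d-1})$ by the mixed term $(B_1\cdots B_k\cdot H^{d-k})(H^d)^{k-1}$; but log-concavity of $j\mapsto(B_i^{\,j}\cdot H^{d-j})$ gives $(B_i\cdot H^{d-1})^k\ge (H^d)^{k-1}(B_i^{\,k}\cdot H^{d-k})$, and the genuine reverse Khovanskii--Teissier inequality (which does hold with a constant $O(d)$ and is essentially equivalent to the codimension-one Siu inequality) bounds the mixed term \emph{above} by the product of linear terms, not below. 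Already for $k=2$, $d=2$ the bound you would need reads $(B_1\cdot H)(B_2\cdot H)\le K\,(H^2)(B_1\cdot B_2)$, which visibly fails on $\Pp^1\times\Pp^1$ as $B_1=B_2\to h_2$ and $H\to h_1$. The conceptual reason is that estimating each $B_i$ against $H$ separately discards the mutual position of the $B_i$'s: when they all degenerate in a direction where $H$ is small, each $c_i$ blows up like $\epsilon^{-1}$ while the target coefficient $(\alpha\cdot\beta^{d-k})/(\beta^d)$ does not behave like $\prod_i c_i$. A correct argument must handle the class $B_1\cdots B_k$ as a whole; for instance when $k=d-1$ one can dualize against nef divisors (since $\psef^{d-1}$ is dual to $\nef^1(X)$, the statement reduces to $(N\cdot\Gamma)(H^d)\le d\,(N\cdot H^{d-1})(\Gamma\cdot H)$ for $\Gamma=B_1\cdots B_{d-1}$ movable, which \emph{is} a consequence of codimension-one Siu), but for $2\le k\le d-2$ this duality is unavailable and one must follow the more involved induction of the cited reference.
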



\subsection{Weil and Cartier numerical $b$-classes} \label{section_bclass}

A model  over $X$ is a projective birational morphism $\pi \colon X' \to X$  from a smooth projective variety $X'$. 
Any such model is obtained as the blow-up of some ideal sheaf on $X$ so that the category $\cM_X$ of all models over $X$ 
is naturally a poset for which $X' \ge X''$ iff the canonical map $X' \dashrightarrow X''$ is a morphism.
When $X'\ge X''$ we say that $X'$ dominates $X''$.
The poset $\cM_X$ is inductive in the sense that any two models are simultaneously dominated by a third one.
The projective limit over the inductive system defined by $\cM_X$ where each model is endowed with the Zariski topology is a quasi-compact topological space called the Riemann-Zariski space of $X$ that we denote by $\X$. We refer~\cite{vaquie} for an interpretation of this space in terms of valuations.

\medskip

A Weil numerical $b$-class $\alpha$ of codimension $k$ is a map  which assigns to any smooth model $X'\in \cM_X$ a numerical class $\alpha_{X'}\in \num^k(X')$,
such that $\pi_*(\alpha_{X'})=\alpha_{X''}$ for any pair of smooth models $X'\ge X''$ with $\pi= (\pi'')^{-1}\circ \pi'$. In the rest of the paper,  we will also refer to codimension one $b$-classes as  $b$-divisors. 
The class $\alpha_{X'}$ is called the incarnation of $\alpha$ in $X'$. 

A Cartier $b$-class is a numerical $b$-class $\alpha$ for which one can find a  model $X''$
such that $\alpha_{X'} = \pi^*(\alpha_{X''})$ for any  model $X'\ge X''$. When it is the case, we say that 
$\alpha$ is determined in $X''$. Conversely for any class $\alpha\in\num^k(X')$, we let $[\alpha]$ be the 
Cartier $b$-class determined by $\alpha$ in $X'$. 

\smallskip

The space of Weil numerical $b$-classes is an infinite dimensional real vector space, which we denote by $\wnum^k(\X)$.
It contains the set of Cartier numerical $b$-classes $\cnum^k(\X)$ as a subspace, and for each smooth model $X'$ the map
$\alpha \mapsto [\alpha]$ induces an injective linear map $\num^k(X')\to\cnum^k(\X)$.

\smallskip

The space  $\wnum^k(\X)$ is a locally convex topological vector space when endowed with its natural product topology
such that every bounded set is relatively compact. 
Note that when $K$ is countable then it is separable and metrizable, but these properties do not hold when $K$ is uncountable.

In this topology, a sequence $\alpha_n \in \wnum^k(\X)$ converges to $\beta \in \wnum^k(\X)$ iff
for any smooth model $X'$ we have $(\alpha_n)_{X'} \to \beta_{X'}$ in $\num^k(X')$.
Since for any $\alpha\in\wnum^k(\X)$ the sequence of Cartier $b$-classes $[\alpha_{X'}]$ converges to $\alpha$,
the space $\cnum^k(\X)$ is dense in $\wnum^k(\X)$.

Pick $\alpha\in \cnum^k(\X)$ and $\beta\in \wnum^l(\X)$. 
Suppose $\alpha$ is determined in a model $X_0$. For any other smooth model $X'\ge X_0$, we set 
$(\alpha \cdot \beta)_{X'} := \alpha_{X'} \cdot \beta_{X'} \in \num^{k+l}(X')$. 
By the projection formula, $\pi_* (\alpha \cdot \beta)_{X''} = (\alpha \cdot \beta)_{X'}$ for any smooth model $X''\ge X'$
so that we may define the class $\alpha \cdot \beta \in \wnum^{k+l}(\X)$ as the unique Weil numerical $b$-class
whose incarnation in any smooth model dominating $X_0$ is equal to $(\alpha \cdot \beta)_{X'}$. 

The ring $\cnum^\bullet(\X) = \oplus \cnum^k(\X)$ is a graded
ring over which $\wnum^\bullet(\X) = \oplus \wnum^k(\X)$ is a graded module, and 
the pairing $\cnum^\bullet(\X) \times \wnum^\bullet(\X)\to \R$ is perfect.

\subsection{Positive cones of $b$-numerical classes}

We discuss here some positivity notions on $b$-classes that are inherited from those defined on particular models.

A class $\alpha\in\wnum^k(\X)$ is said to be pseudo-effective (and we write $\alpha\ge0$) when $\alpha_{X'}\ge0$ for any model $X'$.
Note that the pseudo-effectivity is only preserved by push-forward, and not by pull-back, so that it may happen that a Cartier $b$-class
determined by a pseudo-effective class in a smooth model $X$ is not pseudo-effective in $\wnum^k(\X)$. 
However a Cartier $b$-divisor $\alpha \in \cnum^1(\X)$ is pseudo-effective iff its incarnation in one (or all) of its determination on a smooth model is pseudo-effective.

\smallskip

The notion of base-point free $b$-class is defined as follows. 
We let $\cBPF^k(\X)$ be the convex cone in $\cnum^k(\X)$ generated by Cartier numerical $b$-classes
$[\alpha]$ with $\alpha\in \BPF(X')$ for some  model $X'$.
Since BPF classes are stable by pull-back, a class $\alpha\in\cnum^k(\X)$ is BPF iff it is BPF in one (or any)
of its determination. 

\begin{defi} 
The cone $\BPF^k(\X)$ is the (weak) closure in $\wnum^k(\X)$ of the cone $\cBPF^k(\X)$.
\end{defi}
Since the cone $\BPF^1(X')$ coincides with the nef cone for all model $X'$, 
we write $\Nef(\X)=\BPF^1(\X)$, and $\cNef(\X)=\cBPF^1(\X)$.

\begin{lem}
A class $\alpha\in \wnum^k(\X)$ is BPF iff there exists a net of classes $\alpha_i \in \cBPF^k(\X)$ such that 
 $(\alpha_i)_{X'} \to \alpha_{X'}$ for all model $X'$.
 \end{lem}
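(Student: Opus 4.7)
This lemma is essentially a translation between the ambient weak topology on $\wnum^k(\X)$ and componentwise convergence, so the plan is mostly to unwind definitions.

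First, recall that the topology on $\wnum^k(\X)$ is the product (projective limit) topology inherited from $\prod_{X'} \num^k(X')$, where $X'$ ranges over $\cM_X$. By definition a basic open neighborhood of $\alpha$ is obtained by fixing finitely many models $X_1', \dots, X_r'$ and open neighborhoods $U_j$ of $\alpha_{X_j'}$ in $\num^k(X_j')$, and taking the preimage. Thus a net $\beta_i \to \beta$ in $\wnum^k(\X)$ if and only if $(\beta_i)_{X'} \to \beta_{X'}$ in $\num^k(X')$ for every model $X'$.

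For the $(\Leftarrow)$ direction I would simply observe that if a net $\alpha_i \in \cBPF^k(\X)$ satisfies $(\alpha_i)_{X'} \to \alpha_{X'}$ for every $X'$, then $\alpha_i \to \alpha$ in $\wnum^k(\X)$ by the remark above, so $\alpha$ lies in the closure $\BPF^k(\X) = \overline{\cBPF^k(\X)}$.

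For the $(\Rightarrow)$ direction I would apply the standard fact that in any topological space the closure of a set coincides with the set of limits of nets from that set. Concretely, given $\alpha \in \BPF^k(\X)$, I index by the directed set $I$ whose elements are pairs $i = (\mathcal F, U)$ where $\mathcal F \subset \cM_X$ is finite and $U$ is an open neighborhood of $(\alpha_{X'})_{X'\in \mathcal F}$ in $\prod_{X'\in \mathcal F}\num^k(X')$, ordered by refinement. For each such $i$ the preimage of $U$ under the projection is an open neighborhood of $\alpha$ in $\wnum^k(\X)$, which by definition of the closure meets $\cBPF^k(\X)$; I pick $\alpha_i$ in the intersection. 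Then $\alpha_i \to \alpha$ componentwise on every model, which is the required net. The only point to check is that $I$ is genuinely directed, which is clear since any two pairs $(\mathcal F_1, U_1), (\mathcal F_2, U_2)$ are refined by $(\mathcal F_1\cup \mathcal F_2, \tilde U_1 \cap \tilde U_2)$ after enlarging each $U_j$ by the identity factor on the missing models.

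There is no genuine obstacle here; the only mild subtlety is that the space is not metrizable when $K$ is uncountable, so one must really use nets rather than sequences, which forces the explicit construction of the directed set above.
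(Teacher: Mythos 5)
Your proof is correct and is essentially the paper's argument: both realize $\BPF^k(\X)$ as the net-closure of $\cBPF^k(\X)$ in the product topology and pick approximants from a directed family of basic neighborhoods of $\alpha$. The only cosmetic difference is the indexing set: the paper uses neighborhoods $[U,X']$ determined by a \emph{single} model (these already form a neighborhood basis because the poset of models is inductive and the incarnations are push-forward compatible), whereas you index by finite families of models, which works equally well.
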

\begin{proof}
Pick any class $\alpha\in\wnum^k(\X)$. For any model $X'$, and for any open subset $U$ of $\num^k(X')$, let
$[U,X'] = \{ \beta \in \wnum^k (\X), \beta_{X'} \in U\}$. Then the collection $I_\alpha$ of all sets containing $\alpha$ and of the form
$[U,X']$ forms a basis of neighborhood of $\alpha$. It is also an inductive set for the order relation
$[U_0,X_0] \le [U_1,X_1]$ iff $X_0 \le X_1$ and the preimage of $U_0$ by the pullback morphism $\num^k(X_0)\to\num^k(X_1)$ contains
$U_1$. 

Suppose that $\alpha$ is BPF. Then for each $i=[U,X']\in I_\alpha$ we may choose a Cartier BPF $b$-class $\alpha_i$ 
in $[U,X']$. By construction, we have  $(\alpha_i)_{X'} \to \alpha_{X'}$ for all model $X'$.
The converse statement is clear.
\end{proof}

\begin{lem}\label{lem:negativity}
For any class $\alpha\in \nef^1(\X)$, and for any model $X''\ge X'$ we have
\[\alpha \le [\alpha_{X''}]\le [\alpha_{X'}]~.\]
 \end{lem}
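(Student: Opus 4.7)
The plan is to reduce both inequalities to the negativity lemma, applied to nef classes living in single models. Denote by $\sigma \colon X'' \to X'$ the birational morphism coming from the domination $X'' \ge X'$. The central preliminary fact, which I expect to be the main obstacle, is that the incarnation $\alpha_Y$ is a nef class in $\num^1(Y)$ for every model $Y$; once this is in place, the rest is a two-case application of the negativity lemma.

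To prove this preliminary, I approximate $\alpha$ by a sequence $\alpha_n \in \cNef(\X)$ (available from the definition of $\Nef(\X)$) and pass to the limit in the closed cone $\nef^1(Y)$. Each $\alpha_n$ is a non-negative combination $\sum_i c_i [\beta_i]$ with $\beta_i$ nef on some model $Y_i$. On a smooth common model dominating both $Y$ and the $Y_i$, the incarnation of $\alpha_n$ is a non-negative combination of pull-backs of nef classes, hence itself nef. One then invokes the fact that push-forwards of nef classes under birational morphisms of smooth projective varieties remain nef (which follows from the projection formula together with the non-negativity of the coefficients appearing in the Gysin pullback of a curve) to deduce nefness of $(\alpha_n)_Y$, and hence of its limit $\alpha_Y$.

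Granting this, the second inequality $[\alpha_{X''}] \le [\alpha_{X'}]$ is immediate. Both sides are Cartier $b$-divisors determined in $X''$, so the pseudo-effectivity of the difference is tested there, where
\[
\sigma^*\alpha_{X'} - \alpha_{X''} = \sigma^*\sigma_*\alpha_{X''} - \alpha_{X''} \ge 0,
\]
using the $b$-class compatibility $\alpha_{X'} = \sigma_*\alpha_{X''}$ and the negativity lemma applied to the nef class $\alpha_{X''}$.

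For the first inequality $\alpha \le [\alpha_{X''}]$, I verify $([\alpha_{X''}])_Y \ge \alpha_Y$ model-by-model. If $Y$ is dominated by $X''$ via some $\mu \colon X'' \to Y$, then $([\alpha_{X''}])_Y = \mu_*\alpha_{X''} = \alpha_Y$ by the compatibility condition. If $Y$ dominates $X''$ via some $\nu \colon Y \to X''$, then $([\alpha_{X''}])_Y = \nu^*\alpha_{X''}$, and the negativity lemma applied to the nef class $\alpha_Y$ yields $\nu^*\alpha_{X''} = \nu^*\nu_*\alpha_Y \ge \alpha_Y$. For an arbitrary $Y$, I pass to a smooth common refinement $Z$ dominating both $Y$ and $X''$, apply the previous case there to obtain $([\alpha_{X''}])_Z \ge \alpha_Z$, and push forward to $Y$, which preserves pseudo-effectivity.
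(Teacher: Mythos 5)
Your argument rests on the preliminary claim that the incarnation $\alpha_Y$ of a nef $b$-divisor class is a nef class in $\num^1(Y)$ for \emph{every} model $Y$, which you justify by asserting that push-forwards of nef divisor classes under birational morphisms of smooth projective varieties remain nef. This is false, and it is the crux of the matter. The standard counterexample is a flop over the base: take two smooth models $Y,Y^+$ over $X$ that are isomorphic in codimension one but related by a flop (e.g.\ the two orders of blowing up two lines in $\Pp^3$ meeting at a point), let $W$ be a common resolution with $p\colon W\to Y$, $q\colon W\to Y^+$, and let $A^+$ be ample on $Y^+$. Then $q^*A^+$ is nef on $W$, but $p_*q^*A^+$ is the strict transform of $A^+$ on $Y$, which is \emph{negative} on the flopped curve. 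So the incarnation of the nef Cartier $b$-class $[A^+]$ in the model $Y$ is not nef; it is only \emph{movable} (this is exactly why the paper's Theorem on BPF classes characterizes incarnations as movable rather than nef). The parenthetical justification via ``non-negativity of the coefficients in the Gysin pullback of a curve'' is likewise not a valid fact: the same example shows the numerical pullback of the flopped curve is not pseudo-effective. This false preliminary is then used twice in an essential way: the negativity lemma is applied to ``the nef class $\alpha_{X''}$'' relative to $\sigma$ (second inequality) and to ``the nef class $\alpha_Y$'' relative to $\nu$ (first inequality), and in both places the class in question need not be nef, nor even $\sigma$- resp.\ $\nu$-nef, so the hypothesis of the negativity lemma as stated in the paper is not met.

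The statement itself survives, and your architecture is repairable, but a genuinely different ingredient is needed. Two standard fixes: (a) observe that every incarnation of a class in $\Nef(\X)$ is a \emph{movable} divisor class (push-forwards of nef classes are movable and the movable cone is closed), and prove the negativity inequality $\sigma^*\sigma_*\beta\ge\beta$ for movable $\beta$ directly --- for a strongly movable integral class a general member $D$ of the linear system has no $\sigma$-exceptional component, so it equals the strict transform of $\sigma_*D$ and $\sigma^*\sigma_*D-D$ is an effective exceptional divisor; then pass to limits. Or (b), for the first inequality only, approximate $\alpha$ by nef Cartier classes $\alpha_n$ determined in models $Y_n$, apply the negativity lemma only on models $Z$ dominating both $Y_n$ and $X''$ (where $(\alpha_n)_Z$ is an honest pullback of a nef class), push the resulting effective difference down, and pass to the limit in $n$ using that the pseudo-effective cone is closed. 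Note that even route (b) does not dispose of the second inequality $[\alpha_{X''}]\le[\alpha_{X'}]$ without something like the movable version of negativity, since that inequality reduces precisely to $\sigma^*\sigma_*\alpha_{X''}\ge\alpha_{X''}$ for the (generally non-nef) incarnation $\alpha_{X''}$.
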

\begin{proof}
This is a rephrasing of the negativity lemma alluded to above.
\end{proof}
\subsection{The Banach spaces $\numbpf{k}(\X) , \numbpfs{k}(\X)$ and $\nums(\X)$}\label{sec:3Banach}
Let $\om$ be any fixed big and nef Cartier $b$-divisor class.
For any class $\alpha \in \cnum^k(\X)$, we define:
\begin{align}
\normbpf{\alpha} &:= \inf_{\substack{\alpha = \alpha_+ - \alpha_- \\
\alpha_{\pm}\in \BPF^{k}(\X) }} (\alpha_+ \cdot \om^{d-k}) + (\alpha_- \cdot \om^{d-k}),
\\
\normbpfs{\alpha} &:= \sup_{\substack{\gamma \in \cBPF^{d-k}(\X)\\
(\omega^k \cdot \gamma) =1 
}} |(\alpha \cdot \gamma)|,
\end{align}
and when $\alpha \in \cnum^1(\X)$, we set:
\begin{equation} \label{def_norm_l2}
\normsigma{\alpha} :=\sup_{ \substack{\gamma \in \cBPF^{d-2}(\X) \\
(\gamma \cdot \om^2) = 1}}  \left( 2 (\alpha \cdot  \om \cdot \gamma )^2  -  ( \alpha^2 \cdot \gamma) \right)^{1/2}.
\end{equation}
We showed in~\cite[\S 2--3]{spectral} that these functions induce norms on $\cnum^k(\X)$ and $\cnum^1(\X)$ respectively.
\smallskip 

One defines the spaces $\numbpf{k}(\X)$, $\numbpfs{k}(\X)$ as the completions of $\cnum^k(\X)$ with respect to $\normbpf{\cdot}$ and  $\normbpfs{\cdot}$ respectively. 
The space $\nums(\X)$ is similarly obtained as the completion of $\cnum^1(\X)$ with respect to the norm $\normsigma{\cdot}$.

It was proved in~\cite[Theorem 3.3]{spectral}, that $\nums(\X)$  is a vector subspace of $\wnum^1(\X)$ which does not depend on $\om$. 
We also obtained the following statement (see \cite[Theorem 4.11]{spectral}). 

\begin{thm} \label{thm_intersection_nums} The intersection product $\cnum^1(X)\times \cnum^1(\X) \to \cnum^2(\X)$ extends continuously to a symmetric bilinear map: 
\begin{equation}
\nums(\X) \times \nums(\X) \to \numbpfs{2}(\X).
\end{equation}
\end{thm}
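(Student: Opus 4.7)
The strategy is to prove an explicit Cauchy--Schwarz type inequality
\[
\normbpfs{\alpha \cdot \beta} \;\le\; \normsigma{\alpha}\,\normsigma{\beta}
\qquad \text{for all } \alpha,\beta\in\cnum^1(\X),
\]
and then invoke the standard extension theorem for continuous bilinear maps between normed spaces with complete target: since $\cnum^1(\X)$ is dense in $\nums(\X)$ and $\numbpfs{2}(\X)$ is a Banach space, the intersection product admits a unique continuous bilinear extension $\nums(\X)\times\nums(\X)\to\numbpfs{2}(\X)$, and the symmetry is inherited from the Cartier case by continuity.

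Unwinding the definition of $\normbpfs{\cdot}$, the key inequality reduces to proving the pointwise estimate
\[
|(\alpha\cdot\beta\cdot\gamma)| \;\le\; \normsigma{\alpha}\,\normsigma{\beta}
\]
for every $\gamma\in\cBPF^{d-2}(\X)$ normalized by $(\om^2\cdot\gamma)=1$. Fix such a $\gamma$ and work in a common model determining $\alpha$, $\beta$, $\gamma$ and $\om$. I would decompose orthogonally with respect to the bilinear form $(\cdot,\cdot)_\gamma:=(\cdot\,\cdot\,\cdot\,\gamma)$ on $\num^1$: write $\alpha=a\om+\alpha_0$ and $\beta=b\om+\beta_0$ where $a=(\alpha\cdot\om\cdot\gamma)$, $b=(\beta\cdot\om\cdot\gamma)$ and $(\alpha_0\cdot\om\cdot\gamma)=(\beta_0\cdot\om\cdot\gamma)=0$. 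A direct expansion gives
\[
(\alpha\cdot\beta\cdot\gamma)=ab+(\alpha_0\cdot\beta_0\cdot\gamma),
\quad
2(\alpha\cdot\om\cdot\gamma)^2-(\alpha^2\cdot\gamma)=a^2-(\alpha_0^2\cdot\gamma),
\]
and symmetrically for $\beta$.

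The crucial ingredient is the Hodge index theorem for the form $(\cdot,\cdot)_\gamma$ attached to a BPF class $\gamma$ of codimension $d-2$, namely that this form has signature $(1,*)$ on $\num^1(X')$ with positive direction $\om$: reducing by linearity to a single complete intersection pushed forward by a flat morphism, one recovers the classical Hodge index theorem on a smooth surface. This guarantees that $p:=-(\alpha_0^2\cdot\gamma)\ge 0$, $q:=-(\beta_0^2\cdot\gamma)\ge 0$, and $(\alpha_0\cdot\beta_0\cdot\gamma)^2\le pq$. Setting $r=(\alpha_0\cdot\beta_0\cdot\gamma)$, one then gets
\[
(ab+r)^2 \;\le\; \bigl(|ab|+\sqrt{pq}\bigr)^2 \;\le\; (a^2+p)(b^2+q),
\]
where the last step is elementary Cauchy--Schwarz in $\R^2$, rewriting the difference as the perfect square $(|a|\sqrt{q}-|b|\sqrt{p})^2$. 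This is exactly $|(\alpha\cdot\beta\cdot\gamma)|^2\le[2(\alpha\cdot\om\cdot\gamma)^2-(\alpha^2\cdot\gamma)][2(\beta\cdot\om\cdot\gamma)^2-(\beta^2\cdot\gamma)]$; bounding each factor by $\normsigma{\alpha}^2$ and $\normsigma{\beta}^2$ respectively (just using the definition of the $\Sigma$-norm as a supremum) and taking supremum over $\gamma$ finishes the estimate.

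The main point is the Hodge index input in the third paragraph: it is precisely the reason that $\normsigma{\cdot}$ is a norm in the first place, and it should already be established in \S 3 of the paper (it is the same fact that underlies Theorem~3.3 of \cite{spectral}). Once this is available, the rest is bookkeeping: the extension step is the usual BLT theorem for bilinear maps into a Banach space, the symmetry of $(\alpha,\beta)\mapsto \alpha\cdot\beta$ on Cartier classes passes to the completion, and the continuity constant of the extended product equals $1$ by construction.
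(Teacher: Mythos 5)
The paper does not actually prove this statement: it is quoted verbatim from the companion paper \cite[Theorem~4.11]{spectral}. Your overall architecture (establish a Cauchy--Schwarz type bound $\normbpfs{\alpha\cdot\beta}\le C\,\normsigma{\alpha}\,\normsigma{\beta}$ on Cartier classes, then extend the bounded bilinear map to the completions and inherit symmetry by density) is indeed the route taken there, and your algebra in the second paragraph is correct: \emph{granting} that $q_\gamma(\theta)=(\theta^2\cdot\gamma)$ is semi-negative on the hyperplane $\{(\theta\cdot\om\cdot\gamma)=0\}$, the chain $(ab+r)^2\le(|ab|+\sqrt{pq})^2\le(a^2+p)(b^2+q)$ does produce the estimate, even with constant $1$.

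The gap is in your third paragraph. The claim that $q_\gamma$ has signature $(1,*)$ for \emph{every} $\gamma\in\cBPF^{d-2}(\X)$ cannot be obtained by ``reducing by linearity'' to a single flat push-forward of a complete intersection of ample classes: having at most one positive eigenvalue is not preserved under positive linear combinations of quadratic forms (for instance $2x^2-y^2$ and $2y^2-x^2$ both have signature $(1,1)$ and are both positive at $\om=(1,1)$, yet their sum is positive definite). A general element of $\cBPF^{d-2}(\X)$ is a positive combination of strongly BPF classes living in \emph{different} models, so $q_\gamma$ is a sum of Lorentzian-type forms, and the mixed Hodge--Riemann property for such a sum is precisely the nontrivial point — it is not formal, and the sharp constant $1$ is in fact not what the authors have: the inequality imported from \cite[Theorem~3.11]{spectral} (see the proof of Theorem~\ref{thm:nums-bpfs} in this paper) carries a universal factor $9$, and the factor $2$ in front of $(\alpha\cdot\om\cdot\gamma)^2$ in the very definition of $\normsigma{\cdot}$ exists exactly to absorb this loss. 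None of this affects your extension step — any universal constant suffices for the BLT argument, and bilinearity and symmetry pass to the completion as you say — but the key estimate needs the genuine argument handling positive combinations of BPF classes rather than an appeal to the Hodge index theorem on a single smooth surface.
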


In \cite[Proposition 2.10 and Theorem 3.16]{spectral}, we proved the following continuous injections:
\begin{equation} 
\label{eq_known_inclusions}
\begin{array}{cl}
\numbpf{k}(\X) & \hookrightarrow  \numbpfs{k}(\X)  \hookrightarrow \wnum^k(\X), 
\\
\numbpf{1}(\X) & \hookrightarrow \nums(\X) \hookrightarrow  \wnum^{1}(\X). 
\end{array}
\end{equation}

Let us record the following useful lemma.
\begin{lem} \label{lem_straightforward} For any $\alpha \in \Nef(\X)$, one has $(\alpha \cdot \om^{d-1}) = \normbpf{\alpha}$.
\end{lem}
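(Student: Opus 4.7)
The plan is to prove the two inequalities $\normbpf{\alpha}\le (\alpha\cdot\om^{d-1})$ and $\normbpf{\alpha}\ge (\alpha\cdot\om^{d-1})$ separately, the first being trivial and the second resting on the fact that the functional $\gamma\mapsto(\gamma\cdot\om^{d-1})$ is non-negative on $\BPF^1(\X)$.

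For the upper bound I would simply use the trivial decomposition $\alpha=\alpha-0$. Since by hypothesis $\alpha\in\Nef(\X)=\BPF^1(\X)$ and $0\in\BPF^1(\X)$, this is an admissible decomposition in the definition of $\normbpf{\cdot}$, so plugging in gives $\normbpf{\alpha}\le (\alpha\cdot\om^{d-1})$.

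For the lower bound, I pick any admissible decomposition $\alpha=\alpha_+-\alpha_-$ with $\alpha_\pm\in\BPF^1(\X)$ and exploit bilinearity of the intersection pairing $\wnum^1(\X)\times\cnum^{d-1}(\X)\to\R$ to write
\[
(\alpha\cdot\om^{d-1})=(\alpha_+\cdot\om^{d-1})-(\alpha_-\cdot\om^{d-1}).
\]
The key sub-claim is that $(\alpha_-\cdot\om^{d-1})\ge 0$. This I would prove by expressing $\alpha_-$ as a weak limit of Cartier BPF classes $\beta_n\in\cBPF^1(\X)$ (by definition of $\BPF^1(\X)$ as the weak closure of $\cBPF^1(\X)$). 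Since $\om^{d-1}\in\cBPF^{d-1}(\X)$ is Cartier, determined in some model $X_0$, the map $\gamma\mapsto(\gamma\cdot\om^{d-1})=\gamma_{X_0}\cdot(\om^{d-1})_{X_0}$ is continuous in the weak topology, so $(\alpha_-\cdot\om^{d-1})=\lim_n(\beta_n\cdot\om^{d-1})$. Each term is the intersection of two Cartier BPF classes, which can be computed in any smooth model simultaneously dominating the determinations of $\beta_n$ and $\om$; in such a model both incarnations are BPF, so by the positivity recalled in \S\ref{sec:numerical cycles} (namely $\cBPF^{d-1}\subset\psef^{d-1}$ and nef classes pair non-negatively with pseudo-effective $(d-1)$-cycles) the intersection is $\ge 0$.

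Combining these, $(\alpha_+\cdot\om^{d-1})+(\alpha_-\cdot\om^{d-1})=(\alpha\cdot\om^{d-1})+2(\alpha_-\cdot\om^{d-1})\ge (\alpha\cdot\om^{d-1})$, and taking the infimum over decompositions yields $\normbpf{\alpha}\ge (\alpha\cdot\om^{d-1})$. There is no real obstacle here: the only delicate point is justifying the sign of $(\alpha_-\cdot\om^{d-1})$ for a Weil (not necessarily Cartier) BPF class $\alpha_-$, but this is handled by the continuity of intersection with a Cartier class together with the standard positivity of the BPF cone on a fixed model.
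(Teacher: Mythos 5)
Your proof is correct and follows essentially the same route as the paper's: the upper bound via the trivial decomposition, and the lower bound by noting that any decomposition $\alpha=\alpha_+-\alpha_-$ satisfies $(\alpha\cdot\om^{d-1})\le(\alpha_+\cdot\om^{d-1})+(\alpha_-\cdot\om^{d-1})$ because $(\alpha_-\cdot\om^{d-1})\ge0$. The paper leaves that last sign implicit while you justify it by weak approximation of $\alpha_-$ by Cartier BPF classes (strictly, a net rather than a sequence), which is a fine, if slightly more detailed, rendering of the same argument.
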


\begin{proof}
Fix a class  $\alpha \in \Nef(\X)$, and note that it follows from the definition that
\begin{equation*}
\normbpf{\alpha} \leq (\alpha \cdot \om^{d-1}). 
\end{equation*}
Conversely, write $\alpha = \alpha_+ - \alpha_-$ where $\alpha_\pm \in \Nef(\X)$ are chosen so that $(\alpha_+ \cdot \om^{d-1}) + (\alpha_- \cdot \om^{d-1}) \leq \normbpf{\alpha} + \epsilon $ with $\epsilon >0$. Then $(\alpha \cdot \om^{d-1}) \leq (\alpha_+ \cdot \om^{d-1}) + (\alpha_- \cdot \om^{d-1})\leq \normbpf{\alpha} +\epsilon $, and this implies the reverse inequality.  
\end{proof}

\subsection{The Banach space $\num^k_\om(\X)$}
As in the previous section, $\om$ is a big and nef Cartier $b$-divisor class.
For any class $\alpha\in\cnum^k(\X)$, we set
\[
\normom{\alpha}:= \inf \left\{C\ge 0, \, 
 - C \omega^k \le \alpha \le C \omega^k\right\}~.
\]
Since the pseudo-effective cone is closed in any model, note that the infimum is actually attained, and
the function $\normom{\cdot}$ defines a norm on $\cnum^k(\X)$.
\begin{lem} \label{lem_estnorm}
There exists a positive constant $C_d>0$ depending only on $d$ such that the following hold:
for any class  $\alpha\in\cBPF^k(\X)$,
\begin{equation}\label{eq:estim-normom}
\frac1{(\om^d)} (\alpha\cdot\om^{d-k})
\le
\normom{\alpha}
\le
C_d (\alpha\cdot\om^{d-k})~;
\end{equation}
and for any class $\alpha\in\cnum^k(\X)$,
\[
\normbpfs{\alpha}
\le 
\normom{\alpha}
\le 
C_d\,\normbpf{\alpha}~.
\]
\end{lem}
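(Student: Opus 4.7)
The plan is to deduce the four inequalities from Siu's inequalities (Proposition~\ref{prop_Siu}) together with the elementary fact that pseudo-effective Cartier $b$-classes pair non-negatively with BPF classes. I would handle the two BPF estimates in \eqref{eq:estim-normom} first, then bootstrap them to obtain the general bounds.

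For a class $\alpha\in\cBPF^k(\X)$, the lower bound comes from intersecting with $\om^{d-k}$. Given any $C\ge0$ realizing $-C\om^k\le\alpha\le C\om^k$, the Cartier $b$-class $C\om^k-\alpha$ is psef, and since $\om^{d-k}$ is BPF (as a power of a big nef Cartier class) the pairing is non-negative, giving $C(\om^d)\ge(\alpha\cdot\om^{d-k})$; taking the infimum over admissible $C$ yields the bound. For the upper bound, I would apply Siu's inequality in a determining model $X_0$ of $\alpha$ with $\beta=\om_{X_0}$ to get $\alpha_{X_0}\le C_d\,\frac{(\alpha\cdot\om^{d-k})}{(\om^d)}\,\om^k_{X_0}$. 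Because $\om$ and $\alpha$ are both Cartier, their intersection numbers are invariant under birational pull-back, so the same inequality propagates with the same constant to every dominating model $X'$ (either by applying Siu independently in $X'$, or by pulling back the psef relation, both giving the same numerical constant by the projection formula). Combined with $\alpha\ge0$ (since BPF classes are psef), this yields $\normom{\alpha}\le C_d\frac{(\alpha\cdot\om^{d-k})}{(\om^d)}$, and the factor $(\om^d)^{-1}$ can be absorbed into the final constant.

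The second pair of inequalities then follow quickly. For $\normbpfs{\alpha}\le\normom{\alpha}$ I would dualize: with $C=\normom{\alpha}$, pairing the relation $-C\om^k\le\alpha\le C\om^k$ with any $\gamma\in\cBPF^{d-k}(\X)$ normalized by $(\om^k\cdot\gamma)=1$ gives $|(\alpha\cdot\gamma)|\le C$, and the supremum over such $\gamma$ is by definition $\normbpfs{\alpha}$. For $\normom{\alpha}\le C_d\normbpf{\alpha}$, I would pick any BPF decomposition $\alpha=\alpha_+-\alpha_-$ with $\alpha_\pm\in\BPF^k(\X)$, extending $\normom{\cdot}$ to $\BPF^k(\X)$ by the same defining formula (which is finite there by Siu applied incarnation by incarnation as above). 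The triangle inequality gives $\normom{\alpha}\le\normom{\alpha_+}+\normom{\alpha_-}$, and then the BPF upper bound just established yields $\normom{\alpha}\le C_d\bigl((\alpha_+\cdot\om^{d-k})+(\alpha_-\cdot\om^{d-k})\bigr)$; passing to the infimum over decompositions gives the claim.

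The only substantive step is the upper bound on BPF Cartier classes, which rests entirely on Siu's inequality; everything else is formal manipulation of definitions and of the intersection pairing. The mild subtlety is checking that Siu's estimate in one determining model promotes to an inequality of $b$-classes, but this is automatic because the constants appearing on the right are birational invariants, so the same psef relation holds simultaneously in every model.
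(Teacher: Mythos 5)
Your proposal is correct and follows essentially the same route as the paper: Siu's inequality gives $0\le\alpha\le C_d(\alpha\cdot\om^{d-k})\,\om^k$ for BPF classes, the lower bound and the dual-norm bound come from pairing the psef relations with $\om^{d-k}$ and with normalized BPF classes respectively, and the final inequality follows by decomposing $\alpha=\alpha_+-\alpha_-$ and applying the BPF bound to each piece. Your explicit remark that the Siu estimate in a determining model promotes to an inequality of $b$-classes (and extends to the non-Cartier classes $\alpha_\pm\in\BPF^k(\X)$) is a point the paper leaves implicit, but it is the same argument.
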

\begin{proof}
Suppose $\alpha$ is BPF. By Siu's inequalities (Proposition \ref{prop_Siu}), we get $0\le \alpha \le C_d (\alpha\cdot\om^{d-k})\, \om^k$
so that $\normom{\alpha} \le C_d (\alpha\cdot\om^{d-k})$. Conversely,  $\alpha \le C \om^k$
implies $(\alpha\cdot\om^{d-k})\le C (\om^d)$ hence~\eqref{eq:estim-normom} holds.

Pick any Cartier $b$-class $\alpha$, and write 
$\alpha= \alpha_+ - \alpha_-$ with $\alpha_\pm\in \BPF^k(\X)$.
The previous arguments show
\[
\alpha =  \alpha_+ - \alpha_- \le \alpha_+ \le C_d (\alpha_+\cdot\om^{d-k})
\le C_d \normbpf{\alpha}
\]
and similarly $\alpha \ge -  C_d \normbpf{\alpha}$ hence $\normom{\alpha} \le C_d \normbpf{\alpha}$.

Suppose that $- C \om \le \alpha \le C \om$ for some positive constant $C>0$, and pick any class $\gamma\in\BPF^{d-1}(\X)$.
We obtain $|(\alpha\cdot \gamma)|\le C (\om\cdot \gamma)$ hence $\normbpfs{\alpha}\le C$.
\end{proof}

Denote by $\num^k_\om(\X)$ the completion of $\cnum^k(\X)$ for the norm $\normom{\cdot}$.
The previous estimate proves that we have
the following continuous injections:
\begin{equation} 
\label{eq_known_inclusions2}
\begin{array}{cl}
\numbpf{k}(\X) & \hookrightarrow \num^k_\om(\X)  \hookrightarrow  \numbpfs{k}(\X).
\end{array}
\end{equation}

We observe that both in dimension and codimension $1$, the norm $\normom{\cdot}$ is identical to the dual BPF norm.
\begin{prop}\label{prop:BDPP}
For every class $\alpha \in \cnum^{1}(\X)\cup \cnum^{d-1}(\X)$, we have
\[
\normbpfs{\alpha}=\normom{\alpha}~.
\]
In particular, $\num^1_\om(\X)  =  \numbpfs{1}(\X)$ and $ \num^{d-1}_\om(\X)  =  \numbpfs{d-1}(\X)$.
\end{prop}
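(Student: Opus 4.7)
The plan is to reduce the statement to a duality characterization of pseudo-effectivity in terms of intersections with Cartier BPF $b$-classes, which is precisely where the restriction to $k\in\{1,d-1\}$ enters.

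First, the inequality $\normbpfs{\alpha}\le\normom{\alpha}$ is already provided by Lemma~\ref{lem_estnorm}, so it suffices to establish the reverse bound for $\alpha\in\cnum^1(\X)\cup\cnum^{d-1}(\X)$. Setting $C:=\normbpfs{\alpha}$ and unwinding the definition of the dual norm, we will observe that for every $\beta\in\cBPF^{d-k}(\X)$ (writing $k$ for the codimension of $\alpha$) one has
\[
|(\alpha\cdot\beta)|\le C\,(\om^k\cdot\beta),
\]
the case $(\om^k\cdot\beta)=0$ being harmless since bigness of $\om$ together with $\beta\in\cBPF^{d-k}(\X)\subset\psef^{d-k}(\X)$ forces $\beta=0$. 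Consequently $\bigl((C\om^k\pm\alpha)\cdot\beta\bigr)\ge0$ for every Cartier BPF class $\beta$ in the complementary codimension.

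The central step is then to invoke the duality
\[
\delta\in\wnum^k(\X)\text{ is psef}\ \Longleftrightarrow\ (\delta\cdot\beta)\ge0\text{ for every }\beta\in\cBPF^{d-k}(\X).
\]
This is exactly the first item of Conjecture~\ref{conj:dual-BPF}, which is known in the two codimensions we need: for $k=1$ it is Lehmann's theorem \cite[Theorem~1.5]{movable-class} (using the BDPP identification of the dual of the nef cone on each smooth model, together with the characterization of $\cBPF^{d-1}(\X)$ alluded to in \S\ref{sec:movable}), while for $k=d-1$ it is the content of Proposition~\ref{prop:dual-BPF} proved in this paper. Applying this to $\delta=C\om^k\pm\alpha$, both classes are seen to be pseudo-effective, so $-C\om^k\le\alpha\le C\om^k$ in $\wnum^k(\X)$, giving $\normom{\alpha}\le C=\normbpfs{\alpha}$.

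Finally, since the two norms agree on the dense subspace $\cnum^k(\X)$, their completions coincide, yielding $\num^1_\om(\X)=\numbpfs{1}(\X)$ and $\num^{d-1}_\om(\X)=\numbpfs{d-1}(\X)$. The only genuinely nontrivial ingredient is the duality result in the complementary codimension, and this is precisely why the argument does \emph{not} extend to $2\le k\le d-2$: the corresponding case of Conjecture~\ref{conj:dual-BPF} is not known, so the present approach breaks down there.
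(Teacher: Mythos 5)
Your proof is correct and follows essentially the same route as the paper's: the paper likewise reduces to showing that $C\om^k\pm\alpha$ is pseudo-effective for $C=\normbpfs{\alpha}$, checking this model by model via the duality between nef divisor classes and pseudo-effective curve classes when $k=d-1$, and via the BDPP/movable-curve duality (together with the fact that movable curve classes on a model are incarnations of Cartier BPF $b$-classes) when $k=1$ --- which is exactly the content of the two cases of Proposition~\ref{prop:dual-BPF} that you invoke. The only difference is organizational: you cite Proposition~\ref{prop:dual-BPF}, which is stated and proved later in the paper, but since its proof does not rely on the present proposition there is no circularity.
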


\begin{proof}
Suppose $\alpha$ is a Cartier class determined in some smooth model $X'$.
We only need to show that $\normom{\alpha}\le \normbpfs{\alpha}$. 

Let us first treat the case $\alpha \in \cnum^{d-1}(\X)$. For any 
nef class $\gamma\in \nef(X')$ such that $(\gamma \cdot \om^{d-1})=1$ we have 
\[
\left((\alpha  + \normbpfs{\alpha}\, \om^{d-1})\cdot[\gamma]\right) 
\ge 0
\]
hence the class $(\alpha  + \normbpfs{\alpha}\, \om^{d-1})_{X'}$ is pseudo-effective by duality.
Similarly  we obtain $(-\alpha +  \normbpfs{\alpha}\, \om^{d-1})_{X'}\ge0$ hence
$\normom{\alpha}\le \normbpfs{\alpha}$ as required.

\medskip

Let us now treat the case where $\alpha $ is a $b$-divisor.
Recall from~\cite{boucksom_demailly_paun_peternell} that a class $\alpha_0\in\num^1(X)$ is pseudo-effective if and only if $(\alpha_0\cdot \gamma_0)\ge0$ for any movable curve class $\gamma_0\in\num^{d-1}(X)$.

Suppose first that $\alpha$ is a Cartier $b$-divisor determined in some smooth model $X'$. For any movable class $\gamma_0\in\num^{d-1}(X')$, there exists a class $\gamma\in \cBPF^{d-1}(\X)$ such that $\gamma_{X'} = \gamma_0$.
It follows that
\[
\left((\alpha  + \normbpfs{\alpha}\, \om)\cdot\gamma_0\right) 
= 
\left((\alpha + \normbpfs{\alpha}\, \om)\cdot\gamma\right) \ge 0
\]
hence $\alpha + \normbpfs{\alpha} \om\ge0$ as required.
\end{proof}


\section{Approximation of nef $b$-divisor classes}

In this section, we prove a version of our approximation result from the introduction in terms of nets. We shall see later in \S\ref{sec:Diskant} 
that we can replace nets by sequences and obtain Theorem~\ref{thm:approx-nef}.

\begin{thm}\label{thm:approx2}
Let $\alpha \in \Nef(\X)$ be any nef class. 
There exists an inductive set $I$ and a non-increasing net of 
nef Cartier $b$-divisor classes $(\alpha_i)_{i\in I} \in \cNef(\X)$ 
 that is converging to $\alpha$.
\end{thm}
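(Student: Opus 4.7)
The plan is to construct the net explicitly: take $I = \cM_X$ (the directed poset of smooth models over $X$, ordered by domination) and set
\[
\alpha_i := [\alpha_{X_i}] \in \cnum^1(\X)
\]
for each $X_i \in I$. Granting that each $\alpha_i$ lies in $\cNef(\X)$, the rest is immediate from Lemma~\ref{lem:negativity}: applied to $\alpha$, it yields $\alpha \leq [\alpha_{X_j}] \leq [\alpha_{X_i}]$ whenever $X_j \geq X_i$, i.e.\ $\alpha \leq \alpha_j \leq \alpha_i$, which simultaneously gives the non-increasing property and the uniform lower bound. For convergence in $\wnum^1(\X)$, the very definition of a $b$-divisor gives $(\alpha_i)_{X'} = (\pi_{X_i \to X'})_* \alpha_{X_i} = \alpha_{X'}$ whenever $X_i \geq X'$, so the net is eventually constant on each model, which trivially implies $\alpha_i \to \alpha$ in the product topology.

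The hard (and only non-formal) part is to show that, for any $\alpha \in \Nef(\X)$ and any model $X'$, the incarnation $\alpha_{X'}$ is nef in $\num^1(X')$. I would prove this by approximation: by definition of $\Nef(\X)$, write $\alpha = \lim_n \beta_n$ with $\beta_n \in \cNef(\X)$, and after replacing the determination model of $\beta_n$ by a common smooth refinement, assume each $\beta_n$ is determined in some $Y_n$ dominating $X'$ via a birational morphism $\pi_n \colon Y_n \to X'$. Then $(\beta_n)_{X'} = (\pi_n)_* (\beta_n)_{Y_n}$ is the pushforward of a nef divisor class under a birational map between smooth projective varieties. Invoking the classical fact that such pushforwards remain nef---which rests on the projection formula combined with the effectivity of the cycle-theoretic pullback of an irreducible curve $C \subset X'$ (valid because both $X'$ and $Y_n$ are smooth, so that $\pi_n^*[C]$ is represented by the strict transform plus an effective exceptional contribution)---each $(\beta_n)_{X'}$ is nef. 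Since the nef cone of the finite-dimensional space $\num^1(X')$ is closed, passing to the limit yields $\alpha_{X'}$ nef, as required.

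Once this nefness statement is in hand, the construction of $(\alpha_i)_{i \in I}$ above goes through and completes the proof. The main obstacle is really just this stability of nefness under birational pushforward; everything else is a formal consequence of Lemma~\ref{lem:negativity} and the $b$-divisor compatibility. I would emphasize that this argument yields only a net, since $\cM_X$ is in general uncountable; the upgrade to a decreasing \emph{sequence} as promised by Theorem~\ref{thm:approx-nef} is postponed to \S\ref{sec:Diskant} and genuinely requires the more delicate tools (Diskant-type inequalities and asymptotic multiplier ideals) developed there.
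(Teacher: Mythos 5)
Your reduction to the claim that each incarnation $\alpha_{X'}$ is nef is the right way to isolate the difficulty, but that claim is false, and the ``classical fact'' you invoke to prove it --- that the pushforward of a nef divisor class under a birational morphism of smooth projective varieties is again nef --- fails in dimension $\ge 3$. The computation behind it is $(\pi_*\beta\cdot C)=((\beta+F)\cdot \tilde C)$ where $\pi^*\pi_*\beta=\beta+F$ with $F$ effective and $\pi$-exceptional; the term $(F\cdot\tilde C)$ is nonnegative only when $\tilde C$ is not contained in $\mathrm{Supp}(F)$, i.e.\ when $C$ is not contained in the image of the exceptional locus. On surfaces that image is a finite set of points, so the statement holds there, but not in general. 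Concretely, let $X'=\mathrm{Bl}_{p_1,p_2}\mathbb{P}^3$ with hyperplane pullback $H$ and exceptional divisors $E_1,E_2$, and let $D=H-E_1-E_2$. The pencil of planes through the line $\ell=\overline{p_1p_2}$ shows that $|D|$ has base locus the strict transform $\tilde\ell$, so after blowing up $\tilde\ell$ via $\mu\colon Y\to X'$ the class $\mu^*D-F$ is basepoint free, hence nef, and $\mu_*(\mu^*D-F)=D$; yet $(D\cdot\tilde\ell)=1-1-1=-1$, so $D$ is not nef. Thus the nef \emph{Cartier} $b$-divisor class $\alpha$ determined by $\mu^*D-F$ in $Y$ already has a non-nef incarnation $\alpha_{X'}=D$, and $[\alpha_{X'}]\notin\cNef(\X)$ (consistently with Theorem~\ref{thm:cartier_nef}). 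In general the incarnations of a class in $\Nef(\X)$ are only \emph{movable} (this is Theorem~\ref{thm:interpret-BPF} for $k=1$; for divisors the movable cone is exactly the closure of the pushforwards of nef cones from higher models, and it strictly contains the nef cone in dimension $\ge 3$), so your net $\alpha_i=[\alpha_{X_i}]$ simply does not consist of nef Cartier classes.

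This is not a repairable detail but the entire content of the theorem: there is no known way to produce, from the incarnations alone, nef Cartier classes dominating $\alpha$. The paper instead proves Proposition~\ref{pro:good-approx} --- for every model $X'$ and every big $\gamma\in\num^1(X')$ there is a nef Cartier class $\beta$ with $\beta\ge\alpha$ and $\beta_{X'}\le\alpha_{X'}+\gamma$ --- and assembles the net from these approximants. Producing such a $\beta$ requires genuinely new positivity input: one takes a big line bundle $L$ squeezed between $\alpha_{X'}$ and $\alpha_{X'}+\gamma/2$, and approximates the nef envelope $P([c_1(L)])$ from above by the nef Cartier classes coming from the globally generated sheaves $\mathcal{O}(K_X+(d+1)A+mD)\otimes\cJ(\mathfrak{b}_\bullet^m)$ of Theorem~\ref{thm:global-gen}, together with subadditivity of asymptotic multiplier ideals to get monotonicity. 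The formal parts of your argument (monotonicity and the lower bound via Lemma~\ref{lem:negativity}, and the eventual-constancy giving weak convergence, as well as the remark that the upgrade from nets to sequences is deferred to the Diskant inequalities) are fine, but they all rest on the false nefness claim.
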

 In \S\ref{section_ideal} and~\ref{section_nef_envelope} we first collect some general facts on ideal sheaves, and nef envelopes.
 The proof of Theorem~\ref{thm:approx2} is then given in \S\ref{section_pf_approx}. 
In \S\ref{sec:first-app}, we give a first application of this theorem and prove that a nef divisor $b$-class which is Cartier is determined by a nef class in some model.

\subsection{Coherent ideal sheaves} \label{section_ideal}

To any coherent ideal sheaf $\fA$ on $X$, we attach a Cartier $b$-divisor $[\fA]$ 
as follows. Choose a log-resolution $X'$ of $\fA$ so that one can write
$\mathcal{O}_{X'}(-D) =\fA\cdot \mathcal{O}_{X'}$
for some divisor $D$. Then  we let $[\fA]$ be the Cartier $b$-divisor determined by $[D]$ 
in $X'$.
Note that this definition does not depend on the choice of resolution, and 
since $D$ is effective that $[\cI] \ge0$.

\smallskip

For any pair of ideal sheaves $\fA, \fB$ on $X$, we have
\begin{enumerate}
\item[(i)]  $[\fA]\ge [\fB]$ when $\fA \subset \fB$, and
\item[(ii)] $[\fA\cdot \fB]=[\fA]+[\fB]$.
\end{enumerate}

The notion of multiplier ideal sheaf will play an essential role in the paper. We briefly discuss the basic definitions and properties
that are necessary for our purposes, refereeing the interested reader to~\cite{MR2095472} for an extensive treatment of this notion.

Let $\fA$ be any coherent ideal sheaf on $X$, and let $c>0$
be any positive constant.
If $X' \overset{\pi}{\to} X$ is a resolution of $\fA$ as above, then we let  $\mathcal{J}(\fA^c)=\pi_* (K_{X'/X} -  \left \lfloor{c\cdot D}\right \rfloor
)$
where $K_{X'/X}= \pi^* K_X- K_{X'}$ is the relative canonical sheaf, and $ \left \lfloor{c\cdot D}\right \rfloor$ is the round-down of the divisor $c\cdot D$.
This definition does not depend on the resolution.

A graded sequence of coherent ideal sheaves $\fA_\bullet= (\fA_m)$ is by definition  a sequence 
of coherent ideal sheaves such that $\fA_0 = \cO_X$ and 
$\fA_n\cdot \fA_m\subset \fA_{n+m}$ for all $n,m\ge0$. For any $c>0$,
one can show that $\cJ(\fA_m^{c/m}) \subset \cJ(\fA_n^{c/n})$ whenever $m$ divides $n$, and one defines
the multiplier ideal sheaf $\cJ(\fA_\bullet^c)$ as the unique maximal ideal sheaf containing all $\cJ(\fA_m^{c/m})$. 
This definition makes sense by Noetherianity. Moreover, we have
\begin{equation}\label{eq:4567}
\fA_m \subset \mathcal{J}(\fA_\bullet^m)
\end{equation} 
for all $m>0$ by~\cite[Theorem~11.1.19]{MR2095472},
and  the following fundamental
sub-additivity property holds (see~\cite[Theorem~11.2.3]{MR2095472}):
\begin{equation}\label{eq:sub-additivity}
\mathcal{J}(\fA_\bullet^{c(m+n)}) \subset \mathcal{J}(\fA_\bullet^{cn})\cdot\mathcal{J}(\fA_\bullet^{cm})
\end{equation} 
for any graded sequence of coherent ideal sheaves $\fA_\bullet$.

 Suppose now that $L\to X$ is a big line bundle. For $n$ large enough, the space of global sections $H^0(X,L^{\otimes n})$ is non zero, and
 we may consider the base locus $\mathfrak{b}_n(L)$  which is the coherent ideal sheaf locally defined by the vanishing of local sections of $L^{\otimes n}$.
 The sequence $\mathfrak{b}_\bullet(L):=\{\mathfrak{b}_n(L)\}$ forms a graded sequence of ideal sheaves and Nadel's vanishing theorem implies the
 following theorem, see~\cite[Corollary 11.2.13]{MR2095472}.

\begin{thm}\label{thm:global-gen}
Fix any very ample line bundle $A$ on $X$ such that $K_X + (d+1)A$ is effective.
For any big line bundle $L\to X$, and for any $m \geq 1$, the sheaf 
\begin{equation}
\mathcal{O}_X(K_X + (d+1) A)\otimes L^m \otimes \cJ(\mathfrak{b}^m_\bullet)
\end{equation}
is globally generated.
\end{thm}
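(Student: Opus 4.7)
The plan is to deduce global generation from Castelnuovo-Mumford regularity applied to the coherent sheaf
\[
\mathcal{F} := \mathcal{O}_X(K_X + (d+1)A) \otimes L^m \otimes \cJ(\mathfrak{b}_\bullet^m).
\]
Since $A$ is very ample, Mumford's criterion asserts that $\mathcal{F}$ is globally generated provided that $H^i(X, \mathcal{F} \otimes \mathcal{O}_X(-iA)) = 0$ for every $i \geq 1$. After cancellation, this amounts to the vanishings
\[
H^i\bigl(X, \mathcal{O}_X(K_X + (d+1-i)A + mL) \otimes \cJ(\mathfrak{b}_\bullet^m)\bigr) = 0, \quad i \geq 1.
\]
The range $i > d$ is automatic by dimension, so the task reduces to $1 \leq i \leq d$, where $P_i := (d+1-i)A$ is ample.

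I would then pass to a log resolution and apply Kawamata-Viehweg vanishing. By Noetherianity, choose $p \geq 1$ divisible enough so that $\cJ(\mathfrak{b}_\bullet^m) = \cJ(\mathfrak{b}_{pm}^{1/p})$, and let $\mu\colon X' \to X$ be a log resolution of $\mathfrak{b}_{pm}$ with $\mathfrak{b}_{pm}\cdot \mathcal{O}_{X'} = \mathcal{O}_{X'}(-F_{pm})$ and $F_{pm}$ supported on an SNC divisor. By construction of a base ideal, the mobile part $\mu^*(pmL) - F_{pm}$ is base-point free, hence nef on $X'$. Dividing by $p$ and adding $\mu^*P_i$, the $\mathbb{Q}$-divisor
\[
\mu^*(P_i + mL) - F_{pm}/p = \bigl(\mu^*(mL) - F_{pm}/p\bigr) + \mu^*P_i
\]
is big and nef, since both summands are nef and the second is big and nef for $1 \leq i \leq d$.

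Writing $-\lfloor F_{pm}/p\rfloor = -F_{pm}/p + \{F_{pm}/p\}$, the $\mathbb{Q}$-divisor $\mu^*(P_i + mL) - \lfloor F_{pm}/p\rfloor$ decomposes as a big and nef $\mathbb{Q}$-divisor plus an SNC divisor with fractional coefficients in $[0,1)$. Kawamata-Viehweg vanishing on $X'$ then gives
\[
H^i\bigl(X', \mathcal{O}_{X'}(K_{X'} + \mu^*(P_i + mL) - \lfloor F_{pm}/p\rfloor)\bigr) = 0
\]
for $i \geq 1$. Combined with the local vanishing $R^j\mu_*\mathcal{O}_{X'}(K_{X'/X} - \lfloor F_{pm}/p\rfloor) = 0$ for $j \geq 1$ and the identity $\mu_*\mathcal{O}_{X'}(K_{X'/X} - \lfloor F_{pm}/p\rfloor) = \cJ(\mathfrak{b}_\bullet^m)$, the Leray spectral sequence collapses to yield the desired vanishing on $X$. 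The delicate point is the bookkeeping between integer-part truncations and SNC support required for Kawamata-Viehweg to apply in this form; this is where the characteristic-zero hypothesis on $K$ enters crucially, through the existence of log resolutions and Kodaira-type vanishing.
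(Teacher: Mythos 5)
Your argument is correct and is precisely the standard proof of the result that the paper invokes by citation (Lazarsfeld, \emph{Positivity in Algebraic Geometry II}, Corollary 11.2.13): Castelnuovo--Mumford regularity with respect to the very ample $A$ reduces global generation to a Nadel-type vanishing for the asymptotic multiplier ideal, which you then obtain from Kawamata--Viehweg vanishing on a log resolution of $\mathfrak{b}_{pm}$ together with local vanishing and the Leray spectral sequence. The paper gives no proof of its own, so there is nothing further to compare; your write-up simply supplies the cited argument in full and correctly.
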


\subsection{Nef envelopes} \label{section_nef_envelope}

In this section, we gather some facts on the notion of nef envelopes following~\cite{boucksom_favre_jonsson_volume,MR2931273,MR3010168}. 
These notions were extended to arbitrary fields in~\cite{MR3368254}.

\smallskip

Let us fix any big Cartier $b$-divisor class $\beta\in\cnum^1(\X)$. 
Consider the set $\mathcal{D}(\beta)$ of classes $\gamma\in\cNef(\X)_\Q$ such that $\gamma\le \beta$.
By~\cite[Lemma~2.6]{boucksom_favre_jonsson_volume} or~\cite[Lemma~3.4]{MR3010168}, 
for any $\gamma_1, \gamma_2\in\mathcal{D}(\beta)$ one can find a third element $\gamma_3\in\mathcal{D}(\beta)$
 such that $\gamma_3 \ge \gamma_1$ and $\gamma_3\ge\gamma_2$. 
 For any model $X'$ over $X$, we may thus define  $P(\beta)_{X'}$ as the least upper bound of 
 the set $\{ \gamma_{X'} \in\num^1(X'), \, \gamma\in \mathcal{D}(\beta)\}$
 (see~\cite[Lemma~2.7]{boucksom_favre_jonsson_volume}). The collection of classes $\{P(\beta)_{X'}\}$ defines a nef $b$-divisor class (called the nef envelope of $\beta$)
 such that for any $\gamma\in\cNef(\X)$ satisfying $\gamma\le \beta$, we have $\gamma\le P(\beta)$.

\begin{lem}\label{lem:approx}
For any $\gamma\in\Nef(\X)$ such that  $\gamma\le \beta$, we have $\gamma\le P(\beta)$;
and the nef envelope is the least nef class satisfying this property.
\end{lem}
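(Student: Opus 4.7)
The lemma comprises two assertions: the inequality $\gamma\le P(\beta)$ for every $\gamma\in\Nef(\X)$ with $\gamma\le\beta$, and the characterization of $P(\beta)$ as the least such nef class. The latter follows formally from the former, once we verify that $P(\beta)$ itself belongs to $\Nef(\X)\cap\{\alpha\le\beta\}$. Both properties are visible from the construction: $P(\beta)$ is the weak limit of the directed family $\mathcal{D}(\beta)\subset\cNef(\X)\subset\Nef(\X)$, while $\Nef(\X)$ (by definition the weak closure of $\cNef(\X)$) and $\{\alpha\le\beta\}=\bigcap_{X'}\{\alpha_{X'}\le\beta_{X'}\}$ are both weakly closed in $\wnum^1(\X)$. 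Specializing the main inequality to $\gamma=P(\beta)$ then yields $P(\beta)\le\eta$ for any other nef class $\eta$ satisfying the stated property.

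For the main inequality my plan is to show that $\mathcal{D}(\beta)$ is weakly dense in $\Nef(\X)\cap\{\alpha\le\beta\}$. Given such a density, for $\gamma$ in this set one picks a net $\delta_n\in\mathcal{D}(\beta)$ converging weakly to $\gamma$; by definition of $P(\beta)$ each $\delta_n\le P(\beta)$, and the weak closedness of the psef cone passes this inequality to the limit, giving $\gamma\le P(\beta)$.

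To establish the density I would exploit the bigness of $\beta$ via a convex scaling. Choose $c>0$ with $\beta\ge c\,\om$; then for every rational $\epsilon\in(0,1)$,
\[
\beta-(1-\epsilon)\gamma \;=\; \epsilon\beta+(1-\epsilon)(\beta-\gamma) \;\ge\; \epsilon c\,\om,
\]
so $(1-\epsilon)\gamma$ stays strictly below $\beta$ with a uniform slack $\epsilon c\,\om$. Writing $\gamma$ as the weak limit of a net $\gamma_\lambda\in\cNef(\X)_\Q$ (such a net exists since $\gamma\in\Nef(\X)$ and rational classes are dense in each finite-dimensional Cartier nef cone), the candidate approximations are $\delta_\lambda:=(1-\epsilon)\gamma_\lambda\in\cNef(\X)_\Q$. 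One then has to confirm that $\delta_\lambda\le\beta$ holds as $b$-classes for $\lambda$ sufficiently far along the net: choosing the determining model $W_\lambda$ of $\delta_\lambda$ to dominate the determining model of $\beta$, the $b$-class inequality reduces to its verification in $W_\lambda$ alone, since pseudo-effectivity propagates by pushforward to smaller models and by pullback to larger ones (invoking Lemma~\ref{lem:negativity}), and the slack $\epsilon c\,\om$ dominates the approximation error. Letting first $\lambda$ and then $\epsilon\to 0$ yields a net in $\mathcal{D}(\beta)$ converging weakly to $\gamma$.

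The main obstacle is precisely the technical step of passing from weak convergence (pointwise in each model) to the $b$-class inequality $\delta_\lambda\le\beta$ (a statement in every model simultaneously). This requires the interplay between the Cartier structure of $\delta_\lambda$, the pullback/pushforward compatibility from Lemma~\ref{lem:negativity}, and the uniform bigness slack provided by $\beta\ge c\,\om$.
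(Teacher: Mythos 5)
Your argument is correct and follows essentially the same route as the paper: approximate $\gamma$ by nef Cartier classes, use the bigness of $\beta$ to absorb the approximation error via a multiplicative $\epsilon$-scaling (you shrink $\gamma$ by $(1-\epsilon)$ where the paper inflates $\beta$ by $(1+\epsilon)$), invoke Lemma~\ref{lem:negativity} to promote the inequality from the determination model of $\beta$ to an inequality of $b$-classes, and pass to the limit using the closedness of the psef cones. The minimality assertion is handled by a slightly different but equally valid formal step (specializing the first assertion to $P(\beta)$ itself, rather than applying the hypothesis to the Cartier classes in $\mathcal{D}(\beta)$ and using the least-upper-bound definition of $P(\beta)_{X'}$).
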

\begin{rem}
The nef envelope  coincides with the positive product intersection $\langle \beta \rangle$ of~\cite[\S 2.2]{boucksom_favre_jonsson_volume}.
\label{p:nef-envelope}
It is also the nef part of the Zariski decomposition defined in~\cite{MR3010168}.
\end{rem}

\begin{proof}
Suppose $\beta\ge \gamma\in\Nef(\X)$, and $\beta$ is determined in $X'$. Then $\beta_{X'}\ge \gamma_{X'}$ and there exists a sequence $\gamma_n\in\cNef(\X)$
such that $\gamma_{n,X'} \to \gamma_{X'}$. Since $\beta_{X'}$ is big, we have $\gamma_{n,X'}\le (1+\epsilon)\beta_{X'}$ for any $\epsilon>0$ and for $n \geqslant N_\epsilon$ large enough.
We infer from Lemma~\ref{lem:negativity} $\gamma_n\le [\gamma_{n,X'}] \le (1+\epsilon)\beta$ hence $\gamma_n\le (1+\epsilon) P(\beta)$ for $n \geqslant N_\epsilon$.
We conclude that   $\gamma\le P(\beta)$ by letting first $n\to\infty$ and then $\epsilon\to 0$.

Suppose that $\beta'$ is a nef $b$-divisor class such that $\gamma\le \beta'$ for any 
$\gamma\in\Nef(\X)$ satisfying $\gamma\le \beta$. Applying these inequalities to
Cartier $b$-divisor classes, we get $P(\beta)_{X'} \le \beta'_{X'}$ in each model, which concludes the proof.
\end{proof}

We shall use the following characterization of the nef envelope.
\begin{prop}\label{prop:movable-part}
Let $L\to X$ be any big line bundle. Then
\[
P([c_1(L)]) = \lim_{n\to\infty}\, [c_1(L)] - \frac1n [\mathfrak{b}_n(L)]
\]
where $[\mathfrak{b}_n(L)]$ denotes the base locus ideal of sheaves of the line bundle $L^{\otimes n}$.
\end{prop}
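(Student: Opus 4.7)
Setting $\alpha_n := [c_1(L)] - \frac{1}{n}[\mathfrak{b}_n(L)]$, I would verify: (i) each $\alpha_n$ is a nef Cartier $b$-class with $\alpha_n \le [c_1(L)]$, so that Lemma~\ref{lem:approx} already gives $\alpha_n \le P([c_1(L)])$; (ii) the sequence converges in $\wnum^1(\X)$ to some $\alpha_\infty$; and (iii) $\alpha_\infty = P([c_1(L)])$. The first two points use only elementary properties of base ideals, while (iii) is the key technical step and relies on the uniform global generation of asymptotic multiplier ideals.

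\textbf{Nefness and convergence.} For (i), I would pick a log-resolution $\mu \colon X' \to X$ of $\mathfrak{b}_n(L)$ with $\mathfrak{b}_n(L) \cdot \cO_{X'} = \cO_{X'}(-E_n)$. The line bundle $\mu^* L^{\otimes n}(-E_n)$ is globally generated by the pullbacks of sections of $L^{\otimes n}$, hence nef, so $\alpha_{n,X'} = \mu^* c_1(L) - \frac{1}{n} E_n$ lies in $\Nef(X')$ and therefore $\alpha_n \in \cNef(\X)_\Q$. The bound $\alpha_n \le [c_1(L)]$ is immediate. For (ii), the inclusion $\mathfrak{b}_n(L) \cdot \mathfrak{b}_m(L) \subset \mathfrak{b}_{n+m}(L)$ yields the pseudo-effective inequality $[\mathfrak{b}_{n+m}(L)] \le [\mathfrak{b}_n(L)] + [\mathfrak{b}_m(L)]$. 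Fix a model $X'$ and a movable class $v \in \num^{d-1}(X')$; then $n \mapsto [\mathfrak{b}_n(L)]_{X'} \cdot v$ is a non-negative subadditive sequence, so Fekete's lemma ensures that $\lim_n \frac{1}{n}[\mathfrak{b}_n(L)]_{X'} \cdot v$ exists. Since the movable cone spans $\num^{d-1}(X')$ and pairs non-degenerately with $\num^1(X')$, the bounded sequence $\frac{1}{n}[\mathfrak{b}_n(L)]_{X'}$ converges in $\num^1(X')$, whence $\alpha_n \to \alpha_\infty$ in $\wnum^1(\X)$. Moreover, the inclusion $\mathfrak{b}_n^k \subset \mathfrak{b}_{nk}$ gives the monotonicity $\alpha_{nk} \ge \alpha_n$, so $\alpha_n \le \alpha_\infty \le P([c_1(L)])$.

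\textbf{Equality, and the main obstacle.} The hard point (iii) is the reverse inequality $\alpha_\infty \ge P([c_1(L)])$. By Lemma~\ref{lem:approx}, it is enough to check that every rational nef Cartier $b$-class $\gamma \le [c_1(L)]$ satisfies $\gamma_{X'} \le \alpha_{\infty,X'}$ in each model $X'$, or equivalently that $\pi^* c_1(L) - \gamma_{X'} \ge \lim_n \frac{1}{n}[\mathfrak{b}_n(L)]_{X'}$ as pseudo-effective classes. The main tool is Theorem~\ref{thm:global-gen}: choosing a very ample $A$ with $K_X + (d+1)A$ effective, the globally generated sheaves $\cO_X(K_X + (d+1)A + mL) \otimes \cJ(\mathfrak{b}_\bullet^m)$ produce nef Cartier $b$-classes
\[
\beta_m := \tfrac{1}{m}[c_1(K_X + (d+1)A)] + [c_1(L)] - \tfrac{1}{m}[\cJ(\mathfrak{b}_\bullet^m)].
\]
Combining the subadditivity~\eqref{eq:sub-additivity}, the containment~\eqref{eq:4567}, and the fact that sections of $L^{\otimes m}$ multiplied by a fixed non-zero section of $K_X+(d+1)A$ produce a controlled sub-system of sections of $L^{\otimes m}\otimes\cO(K_X+(d+1)A)$, I would then show that $\lim_m \frac{1}{m}[\cJ(\mathfrak{b}_\bullet^m)]_{X'} = \lim_m \frac{1}{m}[\mathfrak{b}_m(L)]_{X'}$ in every model, so that $\beta_m \to \alpha_\infty$. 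This identifies $\alpha_\infty$ as the $b$-divisor incarnation of Nakayama--Boucksom's positive part $P_\sigma(\pi^* c_1(L))$, which is the largest pseudo-effective class dominating all nef classes below $\pi^* c_1(L)$, and hence yields $\gamma \le \alpha_\infty$. The main obstacle is precisely this equality of the two limits and the ensuing comparison with an arbitrary rational nef subclass $\gamma$; this step is the quantitative core of the proof and is where the full strength of the uniform generation of asymptotic multiplier ideals is used.
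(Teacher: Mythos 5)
First, note that the paper does not actually prove this proposition: it simply cites \cite[Proposition~3.7]{MR3010168}, so there is no internal argument to compare yours with. Your steps (i) and (ii) are correct and complete: nefness of $\alpha_n = [c_1(L)]-\frac1n[\mathfrak{b}_n(L)]$ on a log-resolution, the bound $\alpha_n\le P([c_1(L)])$ via Lemma~\ref{lem:approx}, and convergence via subadditivity, Fekete's lemma tested against movable curve classes, and boundedness. This already gives $\alpha_\infty\le P([c_1(L)])$, and your reduction of the converse to showing $\gamma\le\alpha_\infty$ for every rational $\gamma\in\cNef(\X)_\Q$ with $\gamma\le[c_1(L)]$ is the right one.

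The genuine gap is in how you close step (iii). Even granting your (fillable, by multiplying with fixed sections of $K_X+(d+1)A$ and of $k_0L-(K_X+(d+1)A)$) claim that $\lim_m\frac1m[\cJ(\mathfrak{b}_\bullet^m)]=\lim_m\frac1m[\mathfrak{b}_m(L)]$, the conclusion you draw from it is only that $\alpha_\infty=\lim_m\beta_m$ is a \emph{nef} $b$-divisor class. That does not yet show that $\alpha_\infty$ dominates an arbitrary nef Cartier $\gamma\le[c_1(L)]$, which is the entire content of the hard direction. You bridge this by asserting that $\alpha_\infty$ is the $b$-incarnation of $P_\sigma(\pi^*c_1(L))$ and that $P_\sigma$ is ``the largest pseudo-effective class dominating all nef classes below $\pi^*c_1(L)$.'' Neither claim is established: the identification of $\lim_m\frac1m[\mathfrak{b}_m(L)]_{X'}$ with $N_\sigma(\pi^*c_1(L))$ as a \emph{numerical} class is a theorem of Nakayama, and the quoted characterization is not the definition of $P_\sigma$ — it is essentially equivalent to the statement being proved, so invoking it is circular. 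What is actually needed at this point is a concrete comparison, e.g. $\sigma_\Gamma(\pi^*L)\le\sigma_\Gamma(N)$ for $N=\pi^*L-\gamma_{X'}$ psef and $\gamma_{X'}$ nef, together with $N_\sigma(N)\le N$ in the pseudo-effective order, or alternatively a direct estimate showing that for each nef Cartier $\gamma\le[c_1(L)]$ one has $m\gamma\le m[c_1(L)]-[\cJ(\mathfrak{b}_\bullet^m)]+G$ for a fixed class $G$ independent of $m$ and $\gamma$ (this is where the uniform generation of Theorem~\ref{thm:global-gen} is really used). As written, the quantitative core of the proof is deferred rather than carried out, and the proposed route for it does not go through. (A minor additional point: $\mathfrak{b}_n(L)$ is only defined for $n$ large enough that $H^0(X,L^{\otimes n})\neq0$; this should be said, or $L$ replaced by a multiple.)
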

See~\cite[Proposition~3.7]{MR3010168} for the proof.

\subsection{Proof of Theorem~\ref{thm:approx2}} \label{section_pf_approx}
We fix any  Cartier $b$-divisor class $\om$ determined by an ample class in $X$.
We rely on the following proposition whose proof is given below.

\begin{prop} \label{pro:good-approx}
For any nef $b$-divisor class $\alpha$, and any big class $\gamma \in \num^1(X')$ in a model $X'$, 
there exists a nef Cartier $b$-divisor class $\beta$ such that
\[
\alpha \le \beta \text{ and } \beta_{X'} \le \alpha_{X'} + \gamma\]
\end{prop}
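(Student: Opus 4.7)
The idea is to realise $\beta$ as a nef Cartier $b$-class dominating $\alpha$ by passing through the nef envelope of an auxiliary big Cartier $b$-class, and then to extract a Cartier approximation using the asymptotic multiplier ideals introduced in \S\ref{section_ideal}.

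\medskip

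\emph{Step 1 (dominating Cartier class).} After a small perturbation inside the open big cone of $\num^1(X')$, we may assume $L := \alpha_{X'}+\gamma$ is a $\mathbb{Q}$-Cartier big class on $X'$. I first show that the Cartier $b$-divisor $[L]\in\cnum^1(\X)$ determined in $X'$ satisfies $[L]\ge\alpha$. On $X'$ this is immediate since $L-\alpha_{X'}=\gamma$ is pseudo-effective. On a higher model $Y\ge X'$, choose a net $\alpha_i\to\alpha$ with $\alpha_i\in\cNef(\X)$; because $\gamma$ lies in the interior of $\psef^1(X')$, one has $(\alpha_i)_{X'}\le L$ for $i$ large, and applying the negativity lemma to the nef Cartier class $\alpha_i$ one deduces $(\alpha_i)_Y\le\pi^{\ast}(\alpha_i)_{X'}\le\pi^{\ast}L=[L]_Y$. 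Taking $i\to\infty$ and using that the pseudo-effective cone is closed yields $[L]\ge\alpha$. By Lemma~\ref{lem:approx} this gives $\alpha\le P([L])$.

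\medskip

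\emph{Step 2 (Cartier approximation via multiplier ideals).} Fix a very ample divisor $A$ on $X'$ with $K_{X'}+(d+1)A$ effective. For each integer $m\ge1$ consider the asymptotic multiplier ideal $\cJ_m:=\cJ(\mathfrak{b}_\bullet(L)^m)$ and a log-resolution $\mu_m\colon Y_m\to X'$ with $\mu_m^{-1}\cJ_m=\mathcal{O}_{Y_m}(-G_m)$. Theorem~\ref{thm:global-gen} applied to $L^m$ tells us that $\mu_m^{\ast}(mL+K_{X'}+(d+1)A)-G_m$ is basepoint free, hence nef, on $Y_m$. Dividing by $m$ we obtain the nef Cartier $b$-class
\[
\beta_m \;:=\; \Bigl[\mu_m^{\ast}L+\tfrac{1}{m}\mu_m^{\ast}(K_{X'}+(d+1)A)-\tfrac{1}{m}G_m\Bigr]\in\cNef(\X),
\]
determined in $Y_m$.

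\medskip

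\emph{Step 3 (verification of the two inequalities).} The incarnation on $X'$ reads
\[
(\beta_m)_{X'}\;=\;L+\tfrac{1}{m}(K_{X'}+(d+1)A)-\tfrac{1}{m}\mu_{m\ast}G_m,
\]
so replacing $L$ at the outset by $L-\tfrac{1}{m}(K_{X'}+(d+1)A)$ (still big for $m$ large) we absorb the auxiliary ample cushion and obtain $(\beta_m)_{X'}\le\alpha_{X'}+\gamma$. For the inequality $\beta_m\ge\alpha$, I compare $\beta_m$ with the nef envelope: the sub-additivity \eqref{eq:sub-additivity} of the sequence $\{\cJ_m\}$ and the inclusion $\mathfrak{b}_m\subset\cJ_m$ (cf.~\eqref{eq:4567}) combined with Proposition~\ref{prop:movable-part} force $\beta_m\ge P([L])$ for $m$ sufficiently large, whence $\beta_m\ge\alpha$ by Step 1. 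Setting $\beta:=\beta_m$ for any such $m$ concludes the proof.

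\medskip

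\emph{Main obstacle.} The most delicate point is the inequality $\beta_m\ge P([L])$ in Step 3: the naive Cartier approximation $[L]-\tfrac{1}{m}[\mathfrak{b}_m(L)]$ provided by Proposition~\ref{prop:movable-part} only approaches $P([L])$ \emph{from below}, so it does not by itself dominate $\alpha$. Passing to the multiplier ideal refinement $\cJ_m\supseteq\mathfrak{b}_m$ and exploiting Nadel-type vanishing packaged in Theorem~\ref{thm:global-gen} together with the sub-additivity of $\{\cJ_m\}$ is exactly what is needed to bridge this gap and produce a nef Cartier class that is simultaneously above $\alpha$ and close to $\alpha_{X'}$ on the prescribed model $X'$.
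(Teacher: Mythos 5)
Your proof is correct and follows essentially the same route as the paper: squeeze a big $\mathbb{Q}$-line bundle $L$ between $\alpha_{X'}$ and $\alpha_{X'}+\gamma$, deduce $\alpha\le P([L])$ from the negativity lemma and Lemma~\ref{lem:approx}, and approximate $P([L])$ from above by nef Cartier classes built from asymptotic multiplier ideals and Theorem~\ref{thm:global-gen}. The only difference is organizational — the paper outsources your Steps 2--3 to Proposition~\ref{prop:approx-big envelope} (working along the subsequence $m=2^n$ to get the decreasing property from sub-additivity), whereas you inline that argument.
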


Pick any class $\alpha \in \Nef(\X)$. Denote by $I$ the set of all nef Cartier $b$-divisor classes $\beta$ such that 
$\beta \ge \alpha + \epsilon \om$ for some $\epsilon >0$. We endow the set $I$ with a partial order
by declaring that $\beta$ is lower or equal to $\beta'$ iff $\beta' \leqslant \beta$. 

Let us first justify that $I$ is an inductive set. Pick any two nef Cartier $b$-divisor classes $\beta, \beta' \in I$. 
We need to exhibit a class $\beta'' \in I$ such that $\beta''\le\beta$ and $\beta''\le \beta'$. 

We may suppose that both classes are determined by a class in the same model $X'$ and 
that $\beta \ge \alpha + \epsilon \om$ and  $\beta' \ge \alpha + \epsilon \om$ for the same constant $\epsilon>0$. 
By Proposition~\ref{pro:good-approx}, there exists a nef Cartier $b$-divisor class $\beta''$
such that $\alpha + \frac{\epsilon}2 \om \le \beta''$ and $\beta''_{X'} \le \alpha_{X'} + \epsilon \om_{X'}$. 
By Lemma~\ref{lem:negativity}, this implies $\beta'' \le \beta$ and $\beta''\le \beta'$ proving that $I$ is an inductive set. 

In order to make a distinction between nef classes and the poset $I$ parameterizing the approximants of $\alpha$,
let us denote by $\beta_i$ the class associated to the element $i\in I$. 
Pick any model $X'$, and choose
an open neighborhood $U$ of $\alpha_{X'}$ in  $\num^1(X')$. 
One can find 
$\epsilon>0$ such that any class $\gamma\in\num^1(X')$ satisfying 
$\alpha_{X'} \le \gamma \le \alpha_{X'} + 3 \epsilon \om_{X'}$
belongs to $U$. By  Proposition~\ref{pro:good-approx}, there exists a nef Cartier $b$-divisor class
$\beta_i$ such that $\alpha + \epsilon \om \le \beta_i$ and $\beta_{i,X'} \le \alpha_{X'} + 3 \epsilon \om_{X'}$
For any index $j \ge i$, we have
\[
\alpha_{X'}
\le (\beta_j)_{X'} \le(\beta_i)_{X'} \le
 \alpha_{X'} + 3\epsilon \om_{X'}
 \]
 hence $(\beta_j)_{X'}$ belongs to $U$. This proves the net $\beta_i$ is converging to $\alpha$. 
 We conclude by observing that the net is decreasing by construction.

\begin{proof}[Proof of Proposition~\ref{pro:good-approx}]
Since  the big cone is open in $\num^1(X')$, 
one can find a big line bundle $L\to X'$ and an integer $l\in \N$ such that 
$\alpha_{X'} \le \frac1l c_1(L) \le \alpha_{X'} + \frac{\gamma}2$. 
By Proposition~\ref{prop:approx-big envelope} below, there exists a sequence of nef Cartier $b$-divisor classes
$\beta_n$ decreasing to $P(\frac1l c_1(L))$. Set $\beta := \beta_N$ for a sufficiently large integer $N$
so that
$\beta_{X'} \le  \alpha_{X'} + \gamma$. We infer from Lemma~\ref{lem:approx} that 
$\beta \ge P(\frac1l c_1(L)) \ge\alpha$ and the proof is complete.
\end{proof}

\begin{prop} \label{prop:approx-big envelope}
Let $L\to X$ be any big line bundle and set $\alpha := [c_1(L)] \in\cnum^1(\X)$. 
Then there exists a non-increasing sequence of Cartier $b$-divisor classes
$\beta_n\in \cNef(\X)$ converging to the envelope $P(\alpha)$. Moreover, for any $\epsilon>0$ we have $P(\alpha) \le \beta_n\le (1+\epsilon) P(\alpha)$
for all $n$ large enough.
\end{prop}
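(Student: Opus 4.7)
The plan is to construct the approximants $\beta_n$ explicitly using multiplier ideals of the base-ideal sequence $\mathfrak{b}_\bullet(L) = \{\mathfrak{b}_n(L)\}$. Fix a very ample line bundle $A$ on $X$ such that $E := [K_X + (d+1)A]$ is effective (possible by replacing $A$ by a sufficiently positive multiple) and set
\[
\beta_n := \alpha - \tfrac{1}{n}[\cJ(\mathfrak{b}_\bullet^n)] + \tfrac{1}{n}\, E~;
\]
this is a Cartier $b$-divisor class determined on any log-resolution $\pi_n\colon X_n' \to X$ of $\cJ(\mathfrak{b}_\bullet^n)$. Nefness of $\beta_n$ follows from Theorem~\ref{thm:global-gen}: global generation of $\cO_X(K_X + (d+1)A) \otimes L^n \otimes \cJ(\mathfrak{b}_\bullet^n)$ pulls back to global generation of the line bundle attached to the Cartier divisor $n (\beta_n)_{X_n'}$ on $X_n'$, so $(\beta_n)_{X_n'}$ is base-point free and hence $\beta_n \in \cNef(\X)$.

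I would then pass to the multiplicative subsequence $n = 2^k$. Applying the sub-additivity~\eqref{eq:sub-additivity} with $m = n$ yields $\cJ(\mathfrak{b}_\bullet^{2n}) \subset \cJ(\mathfrak{b}_\bullet^n)^2$, hence $[\cJ(\mathfrak{b}_\bullet^{2n})] \ge 2[\cJ(\mathfrak{b}_\bullet^n)]$, and so $\tfrac{1}{2^k}[\cJ(\mathfrak{b}_\bullet^{2^k})]$ is non-decreasing in $k$. Combined with the fact that $E\ge 0$ (so $\tfrac{1}{2^k} E$ is non-increasing), the sequence $\beta_{2^k}$ is non-increasing in $k$. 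For the convergence to the envelope, the inclusion $\mathfrak{b}_n \subset \cJ(\mathfrak{b}_\bullet^n)$ from~\eqref{eq:4567} gives $[\cJ(\mathfrak{b}_\bullet^n)] \le [\mathfrak{b}_n]$, so $\beta_n \ge \alpha - \tfrac{1}{n}[\mathfrak{b}_n] + \tfrac{1}{n} E$, and Proposition~\ref{prop:movable-part} forces $\beta_\infty := \lim_k \beta_{2^k} \ge P(\alpha)$. Conversely, $[\cJ(\mathfrak{b}_\bullet^n)] \ge 0$ gives $\beta_n \le \alpha + \tfrac{1}{n} E$, whence $\beta_\infty \le \alpha$; since $\beta_\infty \in \Nef(\X)$ (being a weak limit of $\cNef(\X)$ classes), Lemma~\ref{lem:approx} yields $\beta_\infty \le P(\alpha)$. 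Thus $\beta_{2^k} \searrow P(\alpha)$.

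For the multiplicative refinement $\beta_n \le (1+\epsilon) P(\alpha)$ for $n$ sufficiently large, I would first observe that $P(\alpha)$ is big at the $b$-divisor level: since $L$ is big, pick $c > 0$ such that $c_1(L) - c\,\om_X$ is pseff on $X$; then $c\,\om \le \alpha$ as nef Cartier $b$-divisors and Lemma~\ref{lem:approx} gives $c\,\om \le P(\alpha)$. The hard part will be upgrading the weak convergence $\beta_{2^k} - P(\alpha) \to 0$ to a uniform $b$-divisor bound $\beta_n - P(\alpha) \le \epsilon c\,\om$. The obstacle is that $\beta_n$ is Cartier (and thus rigid under pullback to higher models) whereas $P(\alpha)$ is only nef (so by Lemma~\ref{lem:negativity} the incarnations $P(\alpha)_{X''}$ may strictly decrease under refinement), so verifying the inequality in the determining model $X_n'$ does not propagate to higher models. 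I expect to resolve this by combining Siu's inequality (Proposition~\ref{prop_Siu}) applied to the nef Cartier $b$-divisor $\beta_n$ with the convergence $(\beta_n \cdot \om^{d-1}) \searrow (P(\alpha) \cdot \om^{d-1})$ and the bigness of $P(\alpha)$, yielding the desired multiplicative $b$-divisor inequality.
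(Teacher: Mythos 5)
Your construction is the paper's: the class $\beta_n=\alpha-\tfrac1n[\cJ(\mathfrak{b}_\bullet^n)]+\tfrac1nE$ is exactly $\tfrac1n\beta'_n$ in the paper's notation, and the nefness via Theorem~\ref{thm:global-gen}, the monotonicity along $n=2^k$ via sub-additivity~\eqref{eq:sub-additivity}, and the identification of the limit with $P(\alpha)$ via~\eqref{eq:4567} and Proposition~\ref{prop:movable-part} all match. The genuine gap is the ``Moreover'' clause, which you leave as a sketch, and the route you propose does not work. Siu's inequality (Proposition~\ref{prop_Siu}) compares a nef class to a big and nef class only up to a fixed dimensional constant $C_d>1$; even granting that you could apply it with $P(\alpha)$ in the role of the reference class (which is problematic since $P(\alpha)$ need not be Cartier), the convergence $(\beta_n\cdot\om^{d-1})\searrow(P(\alpha)\cdot\om^{d-1})$ would only yield $\beta_n\le C_d(1+o(1))P(\alpha)$, never $(1+\epsilon)P(\alpha)$. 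The sharp multiplicative comparison you are after is precisely a Diskant-type inequality, and in this paper Diskant's inequality for nef $b$-classes is \emph{derived from} the approximation theorem (\S\ref{sec:Diskant}), so invoking it here would be circular.

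The correct argument is much shorter, and you already wrote down its key inequality without exploiting it. You used $\beta_n\le\alpha+\tfrac1nE$ only to conclude $\beta_\infty\le\alpha$ in the limit; use it at finite $n$ instead. Since $\alpha$ is big and $E$ is a fixed Cartier class, $\tfrac1nE\le\epsilon\alpha$ for all $n$ large enough, hence $\beta_n\le(1+\epsilon)\alpha$. As $\beta_n$ is a \emph{nef} class dominated by the big Cartier class $(1+\epsilon)\alpha$, Lemma~\ref{lem:approx} (the universal property of the envelope) gives directly
\[
\beta_n\;\le\;P\Bigl(\alpha+\tfrac1nE\Bigr)\;\le\;P\bigl((1+\epsilon)\alpha\bigr)\;=\;(1+\epsilon)\,P(\alpha),
\]
which is inequality~\eqref{eq:001} of the paper. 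The lower bound $P(\alpha)\le\beta_n$ then costs nothing: the sequence is non-increasing and converges to $P(\alpha)$, so $\beta_{2^k}-\beta_{2^l}\ge0$ for $l\ge k$ and closedness of the psef cone in each model gives $\beta_{2^k}\ge P(\alpha)$ in the limit $l\to\infty$. No Siu-type or Diskant-type estimate is needed anywhere in this proposition.
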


\begin{proof}[Proof of Proposition \ref{prop:approx-big envelope}]
Replacing $L$ by one of its multiple, we may (and shall) find an effective divisor $D$ such that $L = \cO_X(D)$.
Let $\mathfrak{b}_\bullet:=\{\mathfrak{b}_n(L)\}$ be the graded sequence of ideal sheaves defined by the base loci of $L^{\otimes n}$.
Fix any very ample line bundle $A$ on $X$ such that $K_X + (d+1)A$ is effective. 
By Theorem~\ref{thm:global-gen}, the sheaf 
\[\mathcal{O}_X( K_X+ (d+1) A+ mD) \otimes \cJ(\mathfrak{b}_\bullet^m)\] is globally generated
so that we may find a nef divisor $D_m$ on a log-resolution $X_m$ of the sheaf of ideals $J(\mathfrak{b}_\bullet^m)$
such that 
\[\mathcal{O}_{X_m}( K_X+ (d+1) A+ mD) \otimes  \cJ(\mathfrak{b}_\bullet^m) = \cO_{X_m}(D_m)~.\]
If $\beta'_m$ denotes the nef Cartier $b$-divisor class determined by $D_m$ in $X_m$, then the incarnation of $\beta'_m$ in $X$ is less effective than $ K_X+ (d+1) A+ mD$, hence
\begin{equation}\label{eq:001}
\frac1m \beta'_m \le P\left(\alpha+ \frac{[K_X+ (d+1)A]}m\right) \le P ((1+\epsilon) \alpha)
\end{equation}
for any fixed $\epsilon>0$, and any $m$ large enough.
\smallskip

Now by the sub-additivity of multiplier ideals~\eqref{eq:sub-additivity}, we have $\cJ(\mathfrak{b}_\bullet^{n+m})\subset \cJ(\mathfrak{b}_\bullet^m)\cdot \cJ(\mathfrak{b}_\bullet^n)$, 
hence $-[\cJ(\mathfrak{b}_\bullet^{n+m})]\le -[\cJ(\mathfrak{b}_\bullet^m)]- [\cJ(\mathfrak{b}_\bullet^n)]$ and the sequence 
$-\frac1{2^m} [\cJ(\mathfrak{b}_\bullet^{2^m})]$ is decreasing.
It follows that $\beta_n:= \frac1{2^n} \beta'_{2^n}$ is also decreasing.
By~\eqref{eq:4567}, we have $\mathfrak{b}_m \subset \mathcal{J}(\mathfrak{b}_\bullet^m)$, and we infer
\[
\beta_n
= 
\frac1{2^n}[K_X+ (d+1) A]+ \alpha - \frac1{2^n} [\mathcal{J}(\mathfrak{b}_\bullet^{2^n})]
\ge 
\alpha - \frac1{2^n}[\mathfrak{b}_{2^n}]
~.\]
By Proposition~\ref{prop:movable-part}, we have $\alpha - \frac1{2^m}[\mathfrak{b}_{2^m}]\to P(\alpha)$ and combining this information with~\eqref{eq:001} we conclude that 
$\beta_n \to P(\alpha)$.
\end{proof}

\subsection{Application of the approximation theorem} \label{sec:first-app}
Our next result is an analog of~\cite[Proposition~2.2]{MR2426355},~\cite[Theorem~5.11]{MR3419957},~\cite[Theorem~5.19]{trivial-valued},~\cite[Theorem~1.3]{MR3960125}, and~\cite[Lemma~4.24]{botero-gil}.

\begin{thm} \label{thm:cartier_nef} 
We have \[\cnum^{1}(\X) \cap \Nef(\X) = \cNef(\X).\] 
In other words any Cartier numerical $b$-divisor class lying in the weak closure of nef classes is determined by a nef class in some model.
 \end{thm}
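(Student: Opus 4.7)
The inclusion $\cNef(\X) \subseteq \cnum^1(\X) \cap \Nef(\X)$ is immediate from the definitions, so all the content lies in the reverse inclusion. Fix $\alpha \in \cnum^1(\X) \cap \Nef(\X)$, determined in some smooth model $X_0$ by a class $\alpha_{X_0} \in \num^1(X_0)$. The plan is to show that $\alpha_{X_0}$ is nef on $X_0$, from which $\alpha = [\alpha_{X_0}] \in \cNef(\X)$ follows tautologically.

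First I would apply Theorem~\ref{thm:approx2} to produce a non-increasing net $(\beta_i)_{i\in I}$ of nef Cartier $b$-divisor classes converging to $\alpha$. By definition of the topology on $\wnum^1(\X)$, the incarnations satisfy $(\beta_i)_{X_0} \to \alpha_{X_0}$ in the finite-dimensional vector space $\num^1(X_0)$. Since $\nef(X_0)$ is a closed convex cone, it suffices to verify that each $(\beta_i)_{X_0}$ already lies in $\nef(X_0)$.

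For each $i$, choose a common smooth model $X'_i \ge X_0$ on which $\beta_i$ is determined by a nef class $\gamma_i \in \nef(X'_i)$, and let $\pi_i \colon X'_i \to X_0$ be the induced birational morphism, so that $(\beta_i)_{X_0} = (\pi_i)_* \gamma_i$. The heart of the argument is the classical fact that the push-forward of a nef divisor class under a proper birational morphism of smooth projective varieties is again nef. Given an irreducible curve $C \subset X_0$ with strict transform $\tilde C \subset X'_i$, the projection formula together with $(\pi_i)_* \tilde C = C$ yields
\[
(\pi_i)_* \gamma_i \cdot C \;=\; \pi_i^*(\pi_i)_* \gamma_i \cdot \tilde C.
\]
The $\pi_i$-exceptional class $F := \pi_i^*(\pi_i)_* \gamma_i - \gamma_i$ is pseudo-effective by the negativity lemma recalled in \S\ref{sec:numerical cycles}. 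The main obstacle is to upgrade this to the effective statement that $F$ is represented by an honestly effective exceptional divisor, a standard strengthening in the style of Shokurov's negativity lemma. Granting this, $F \cdot \tilde C \ge 0$ since $\tilde C$ is irreducible and not contained in the exceptional locus, while $\gamma_i \cdot \tilde C \ge 0$ by nefness of $\gamma_i$; adding these gives $(\pi_i)_* \gamma_i \cdot C \ge 0$. Thus $(\beta_i)_{X_0}$ is nef for every $i$, and passing to the limit yields $\alpha_{X_0} \in \nef(X_0)$, completing the proof.
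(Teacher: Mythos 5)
The reduction to showing that $\alpha_{X_0}$ is nef is correct, and so is the observation that $\nef^1(X_0)$ is closed, but the key lemma you invoke --- that the push-forward of a nef class under a proper birational morphism of smooth projective varieties is again nef --- is \emph{false} once $d\ge 3$, and this breaks the proof. Your own argument shows where: the inequality $F\cdot\tilde C\ge 0$ requires $\tilde C$ not to be contained in the support of the effective exceptional divisor $F$, which fails exactly when $C$ lies inside $\pi_i(\mathrm{Exc}(\pi_i))$, a closed set of codimension $\ge 2$ that may well contain curves in dimension $\ge 3$ (and then no curve of $X_i'$ dominating $C$ avoids the exceptional locus). Concretely, if $X_0$ admits a flop $X_0\dashrightarrow X_0^+$ of a curve $Z$, with common resolution $p\colon X'\to X_0$, $q\colon X'\to X_0^+$ and $A^+$ ample on $X_0^+$, then $q^*A^+$ is nef on $X'$ while $p_*q^*A^+$ is negative on $Z$. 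Push-forwards of nef classes only generate the movable cone of divisors, which strictly contains the nef cone in general.

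This is not a repairable local slip but a flaw in the strategy: by looking only at incarnations in the fixed model $X_0$, the most one can conclude from $(\beta_i)_{X_0}\to\alpha_{X_0}$ with each $(\beta_i)_{X_0}$ a push-forward of a nef class is that $\alpha_{X_0}$ is \emph{movable} (this is precisely the content of Theorem~\ref{thm:interpret-BPF} in codimension $1$), which is strictly weaker than nef. To obtain nefness one must exploit the Cartier hypothesis, i.e., that the incarnations of $\alpha$ in \emph{higher} models are pull-backs of $\alpha_{X_0}$; pull-backs of movable classes are generally not movable, and this is exactly where the obstruction to $\alpha\in\Nef(\X)$ sits when $\alpha_{X_0}$ is movable but not nef. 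The paper accordingly does not use Theorem~\ref{thm:approx2} as a black box: it reruns the construction of Proposition~\ref{prop:approx-big envelope} with $\alpha_X=c_1(L)$, producing approximants $\beta_m=\frac1{2^m}[K_X+(d+1)A]+\alpha-\frac1{2^m}[\mathcal J(\mathfrak b_\bullet^{2^m})]$; since the non-positive terms $-\frac1{2^m}[\mathcal J(\mathfrak b_\bullet^{2^m})]$ form a decreasing sequence converging weakly to $0$, they all vanish, so each $\beta_m$ is determined in $X$ itself by a nef class and $\alpha_X$ is nef in the limit. Some argument of this kind, using the specific shape of the approximants rather than their mere existence, appears unavoidable.
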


\begin{proof}
Pick any $\alpha \in \cnum^{1}(\X) \cap \Nef(\X)$. Without loss of generality, we may suppose that $\alpha$ is determined in $X$. 
By adding a small real ample class to $X$, we may suppose that $\alpha_X = c_1(L)$ for some big line bundle $L\to X$. 
Since $\alpha$ is nef,  we have $P(\alpha_X)=\alpha$.
Redoing the proof of Proposition~\ref{prop:approx-big envelope}, we find a sequence of elements 
\[\beta_m=\frac1{2^m} \beta'_{2^m} =  \frac1{2^m}[K_X+ (d+1) A]+ \alpha - \frac1{2^m} [\mathcal{J}(\mathfrak{b}_\bullet^{2^m})]\in \cNef(\X)\] which converges weakly to $\alpha$.
This implies $- \frac1{2^m} [\mathcal{J}(\mathfrak{b}_\bullet^{2^m})]$ to tend to $0$ weakly. Since the sequence $- \frac1{2^m} [\mathcal{J}(\mathfrak{b}_\bullet^{2^m})]\le 0$ is decreasing, we get $\cJ(\mathfrak{b}_\bullet^{2^m})=0$ for all $m$, 
and $\beta_{m}$ is thus a nef class determined in $X$. This implies $\alpha_X$ to be nef as required.
\end{proof}

\begin{rem} 
We shall prove an analog  statement  for BPF curve $b$-numerical classes, see Proposition~\ref{pro:cartier_nef_curve} below.
The equality $\cnum^{k}(\X) \cap \BPF^k(\X) = \cBPF^k(\X)$
remains however unclear when $2\le k \le d-2$.
\end{rem}

\section{Intersection of nef $b$-divisor classes}

\subsection{Definition}

\begin{prop-def}\label{prop:exists-key}
Let $\alpha_1, \cdots, \alpha_k$ be any family of nef $b$-divisor classes, and pick $\gamma\in\BPF^l(\X)$.
Then there exists a unique class $\Delta \in\BPF^{k+l}(\X)$ satisfying the following properties.
\begin{enumerate}
\item
For any nef Cartier $b$-divisor classes $\beta_i\ge \alpha_i$, we have
 \begin{equation}\label{eq:inf-nef}
\Delta \le (\beta_1\cdot \beta_2 \cdot \ldots \cdot \beta_k\cdot\gamma)~. 
\end{equation}
\item
For any net of nef Cartier $b$-divisor classes  $ \beta^{(i)}_1, \cdots,  \beta^{(i)}_k$
decreasing to  $\alpha_1, \cdots, \alpha_k$, 
we have 
 \begin{equation}\label{eq:inf-nef2}
( \beta^{(i)}_1\cdot  \beta^{(i)}_2 \cdot \ldots \cdot  \beta^{(i)}_k\cdot\gamma)\downarrow \Delta.
\end{equation}
\end{enumerate}
\end{prop-def}

This intersection product was defined in~\cite[\S 2]{boucksom_favre_jonsson_volume} following the construction in the complex setting of positive intersection products introduced by Boucksom in~\cite[\S 3.2]{boucksom2002cones}, see also \cite{boucksom_demailly_paun_peternell}. It bears a strong analogy with the definition of wedge products of currents $dd^c u_i$ in complex geometry.

\begin{proof}[Proof of Proposition-Definition~\ref{prop:exists-key}]
By Theorem~\ref{thm:approx2} one can find a net of nef Cartier $b$-divisor classes $\alpha^{(i)}_l\downarrow \alpha_l$ for all $l=1, \cdots, k$. 
Since $\gamma$ is BPF, for any $j\ge i$ we have 
\[
(\alpha^{(i)}_1\cdot \alpha^{(i)}_2 \cdot \ldots \cdot \alpha^{(i)}_k \cdot \gamma) \ge (\alpha^{(j)}_1\cdot \alpha^{(i)}_1 \cdot \ldots \cdot \alpha^{(i)}_k \cdot \gamma)
\ge (\alpha^{(j)}_1\cdot \alpha^{(j)}_1 \cdot \ldots \cdot \alpha^{(j)}_k \cdot \gamma)\]
hence the net 
$(\alpha^{(i)}_1\cdot \alpha^{(i)}_2 \cdot \ldots \cdot \alpha^{(i)}_k \cdot \gamma)$ is decreasing.
Let $\Delta$ be its (weak) limit. 

To prove~\eqref{eq:inf-nef}, take nef Cartier $b$-divisor classes $\beta_l\ge \alpha_l$, $l=1, \cdots, k$. Choose a model $X'$ in which all classes $\beta_1, \cdots, \beta_k$ are determined. 
Fix any Cartier $b$-divisor class $\om$ determined in $X$ by an ample class and a positive constant $C>0$ such that $\beta_l \le C \om$ for all $l =1, \cdots,  k$.
Pick any $\epsilon>0$. 
We have $(\alpha_l^{(i)})_{X'} \to \alpha_{l,X'}$ in $\num^1(X')$ hence
 $(\alpha_l^{(i)})_{X'} \le  \alpha_{l,X'} + \epsilon \om_{X'}\le (\beta_l)_{X'}+ \epsilon \om_{X'}$ for all $i$ large enough, hence
$\alpha_l^{(i)} \le  \beta_l+\epsilon \om_{X'}$ by Lemma~\ref{lem:negativity}. 
This proves
\[\Delta\le  (\alpha^{(i)}_1\cdot \alpha^{(i)}_2 \cdot \ldots \cdot \alpha^{(i)}_k\cdot\gamma)\le (\beta_1\cdot \beta_2 \cdot \ldots \cdot \beta_k\cdot\gamma)  + \epsilon kC^{k-1} (\om^k\cdot \gamma)\]
and we conclude by letting $\epsilon\to0$.

Finally, pick any net of nef Cartier $b$-divisor classes  $\beta^{(i)}_1, \cdots,  \beta^{(i)}_k$
decreasing to  $\alpha_1, \cdots, \alpha_k$, and let $\Delta'$ be the limit of the net of decreasing classes
$(\beta^{(i)}_1\cdot \beta^{(i)}_2 \cdot \ldots \cdot \beta^{(i)}_k \cdot \gamma)$. By~\eqref{eq:inf-nef}
we have $\Delta \le \Delta'$. But the same argument applies reversing the role of the nets  $\alpha^{(i)}_l$ and  $\beta^{(i)}_l$
which implies $\Delta = \Delta'$, and proves~\eqref{eq:inf-nef2}.

We conclude by observing that since any nef $b$-divisor class is the limit of a decreasing net of nef Cartier $b$-divisor classes,
~\eqref{eq:inf-nef} and~\eqref{eq:inf-nef2} characterize a unique class. 
\end{proof}

The following result is the main statement of the section. It can be viewed as a vague "algebraic" analog of Bedford-Taylor's intersection theory. 

\begin{thm}\label{thm:intersection}
\begin{enumerate}
\item
The intersection product of nef divisor classes coincides with the usual intersection when all classes are Cartier. 
It is non-negative, symmetric, multilinear in the variables $\alpha_1, \cdots, \alpha_k\in\nef(\X)$, linear in the variable $\gamma$ and increasing in each variable.
\item
The intersection product of nef classes is upper-semicontinuous: for any net of divisor classes such that $\alpha^{(j)}_i\to \alpha_i$ in the weak topology, then 
we have
\[
\limsup_j \left(\alpha_1^{(j)}\cdot \ldots \cdot \alpha_k^{(j)}\cdot\gamma \right) \le 
(\alpha_1\cdot \ldots \cdot \alpha_k\cdot\gamma)~.
\]
for any class $\gamma \in \BPF^{d-k}(\X)$.
\item
The intersection product of nef divisor classes is continuous along decreasing sequences: if $\alpha^{(j)}_i$ is a net of divisor classes  decreasing  to $\alpha_i$, then 
\[
\lim_j \left(\alpha_1^{(j)}\cdot \ldots \cdot \alpha_k^{(j)} \cdot\gamma\right) = 
(\alpha_1\cdot \ldots \cdot \alpha_k \cdot\gamma)
\]
in $\num_\om^{k+l}(\X)$ where $\gamma \in \BPF^{l}(\X)$.
\item
The intersection product of nef classes is continuous in the $\normom{\cdot}$-norm. In other words, if $\normom{\alpha^{(j)}_i-\alpha_i}\to0$, then we have
\[
\lim_j \left(\alpha_1^{(j)}\cdot \ldots \cdot \alpha_k^{(j)} \cdot\gamma\right) = 
(\alpha_1\cdot \ldots \cdot \alpha_k \cdot\gamma)
\]
in $\num_\om^{k+l}(\X)$.
\end{enumerate}
\end{thm}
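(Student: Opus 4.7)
The entire theorem is deduced from the characterization of the intersection product furnished by Proposition-Definition~\ref{prop:exists-key}.

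For Part~(1), coincidence with the Cartier intersection product on Cartier nef classes follows by feeding the constant net $\beta_i^{(m)}=\alpha_i$ into property~(ii). Non-negativity, symmetry, and linearity in $\gamma$ transfer from the Cartier case by passage to the limit. Multilinearity in each $\alpha_i$ is obtained by combining the decreasing Cartier approximants provided by Theorem~\ref{thm:approx2} linearly: if $\alpha_1 = a\alpha_1'+b\alpha_1''$ with $a,b\geq 0$ and $\alpha_1'^{(m)}\downarrow \alpha_1'$, $\alpha_1''^{(m)}\downarrow \alpha_1''$, then $a\alpha_1'^{(m)}+b\alpha_1''^{(m)}$ is a decreasing nef Cartier net converging to $\alpha_1$, and (ii) together with Cartier bilinearity yields $(\alpha_1\cdots\gamma) = a(\alpha_1'\cdots\gamma) + b(\alpha_1''\cdots\gamma)$. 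Monotonicity in each variable is immediate from property~(i).

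For Part~(2), I fix $\epsilon>0$ and any Cartier nef classes $\beta_i\geq \alpha_i+\epsilon\om$ determined in a common model $X'$. Weak convergence $(\alpha_i^{(j)})_{X'}\to (\alpha_i)_{X'}$ in the finite-dimensional space $\num^1(X')$ provides $(\alpha_i^{(j)})_{X'} \leq (\beta_i)_{X'}$ for $j$ large enough, and the negativity lemma (Lemma~\ref{lem:negativity}) promotes this to $\alpha_i^{(j)} \leq [(\alpha_i^{(j)})_{X'}] \leq \beta_i$ as $b$-divisors. Monotonicity from Part~(1) then gives $(\alpha_1^{(j)}\cdots\alpha_k^{(j)}\cdot\gamma) \leq (\beta_1\cdots\beta_k\cdot\gamma)$; taking the limsup in $j$, then the infimum over all such $\beta_i$ via property~(i), and finally sending $\epsilon\to 0$ by multilinearity completes the argument.

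Part~(4) combines multilinearity with Siu's inequality (Proposition~\ref{prop_Siu}). Set $\eta_i^{(j)}:=\alpha_i^{(j)}-\alpha_i$, so that $-\epsilon^{(j)}\om\leq \eta_i^{(j)}\leq \epsilon^{(j)}\om$ with $\epsilon^{(j)}\to 0$. Extending the intersection product bilinearly to $\Vect(\Nef(\X))$, one expands
\[
A^{(j)}-A = \sum_{\emptyset\neq S\subset\{1,\ldots,k\}} \prod_{i\in S}\eta_i^{(j)}\cdot \prod_{i\notin S}\alpha_i\cdot \gamma.
\]
A term with $|S|=s$ is sandwiched between $\pm (\epsilon^{(j)})^s$ times $\om^{s}$ intersected with $\prod_{i\notin S}\alpha_i \cdot \gamma \in \BPF^{k+l-s}(\X)$, which by Siu is bounded above by a fixed multiple of $\om^{k+l}$; summing yields $\normom{A^{(j)}-A}=O(\epsilon^{(j)})\to 0$.

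Part~(3) is the main obstacle. By Parts~(1) and~(2), $A^{(j)}$ is a decreasing net in $\BPF^{k+l}(\X)$ weakly converging to $A$, so the residual $R^{(j)}:=A^{(j)}-A \geq 0$ is pseudo-effective and decreases weakly to $0$. The hard point is that $R^{(j)}$ is typically \emph{not} BPF, so Siu cannot be applied directly to bound $\normom{R^{(j)}}$ by the (vanishing) scalar $(R^{(j)}\cdot\om^{d-k-l})$. My plan is to exploit the telescoping identity
\[
R^{(j)} = \sum_{i=1}^{k} \alpha_1\cdots\alpha_{i-1}\cdot(\alpha_i^{(j)}-\alpha_i)\cdot \alpha_{i+1}^{(j)}\cdots\alpha_k^{(j)}\cdot\gamma,
\]
in which every summand is a difference of two BPF classes, and treat each summand separately. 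In codimensions $k+l\in\{1,d-1\}$, Proposition~\ref{prop:BDPP} gives $\normom{\cdot}=\normbpfs{\cdot}$, and a Dini-type argument on the compact cross-section $\{\gamma'\in\BPF^{d-k-l}(\X):(\gamma'\cdot\om^{k+l})=1\}$ (relatively compact by Siu combined with the bounded-implies-relatively-compact property of the weak topology on $\wnum$) promotes the pointwise decrease of $\gamma'\mapsto (R^{(j)}\cdot\gamma')$ to uniform convergence, giving $\normbpfs{R^{(j)}}\to 0$. In the intermediate codimensions, a genuine uniform upper bound $R^{(j)}\leq \rho^{(j)}\om^{k+l}$ valid in every model must be produced; the plan is to apply Siu separately to the BPF factors of each telescoping term, reducing to scalar intersection numbers in the base model that tend to $0$.
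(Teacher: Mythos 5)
Parts (1) and (2) of your proposal are correct and essentially coincide with the paper's argument: you majorize each $\alpha_i^{(j)}$ by a Cartier nef class using weak convergence in a single model of determination together with the negativity lemma (Lemma~\ref{lem:negativity}), then conclude by monotonicity and Proposition-Definition~\ref{prop:exists-key}. Part (4) is correct in substance but follows a genuinely different route: the paper deduces (4) from (3) in one line, whereas you expand multilinearly and control each cross term by Siu's inequality. Your route has the advantage of not depending on (3); be aware, though, that the bound $\pm(\epsilon^{(j)})^{s}\,\om^{s}\cdot\prod_{i\notin S}\alpha_i\cdot\gamma$ for a term with $|S|=s$ is not what the expansion yields directly. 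Since the $\eta_i^{(j)}$ are not nef, monotonicity cannot be iterated inside a product of several $\eta$'s; writing each $\eta_i^{(j)}$ as a difference of nef classes and expanding gives cross terms controlled only by $O(\epsilon^{(j)})\,\om^{k+l}$, with constants coming from uniform bounds $\alpha_i^{(j)},\alpha_i\le C\om$. This weaker estimate still suffices.

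Part (3) is a genuine gap: what you give is a plan, and the plan does not close. The obstacle you identify --- that $R^{(j)}$ is pseudo-effective but not BPF, so that Siu's inequality cannot be applied to it --- is real (a pseudo-effective class of small degree against $\om^{d-k-l}$ need not be dominated by $\delta\,\om^{k+l}$), but the telescoping identity does not remove it, since each summand is again only a difference of BPF classes. Moreover your Dini argument in codimensions $1$ and $d-1$ presupposes that the pairing of the Weil classes $R^{(j)}$ against non-Cartier elements of $\BPF^{d-k-l}(\X)$ is defined and weakly continuous on the cross-section; at this stage of the paper that continuity is not available (it is Corollary~\ref{cor:weak-limits}, whose proof relies on Theorem~\ref{thm:intersection} and the Diskant inequalities). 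The paper's resolution is to majorize the \emph{inputs} rather than the output: exactly as in the proof of (2), for any $\epsilon>0$ and any Cartier nef classes $\beta_i\ge\alpha_i$ determined in a common model, one has $\alpha_i^{(j)}\le\beta_i+\epsilon\om$ for $j$ large, whence the two-sided inequality of classes
\begin{equation*}
(\alpha_1\cdot\ldots\cdot\alpha_k\cdot\gamma)\ \le\ (\alpha_1^{(j)}\cdot\ldots\cdot\alpha_k^{(j)}\cdot\gamma)\ \le\ (\beta_1\cdot\ldots\cdot\beta_k\cdot\gamma)+\epsilon\, k\,C^{k-1}(\om^{k}\cdot\gamma),
\end{equation*}
in which the error term is a small multiple of the \emph{fixed} BPF class $\om^{k}\cdot\gamma\le C'\om^{k+l}$ (Siu applied to $\om^{k}\cdot\gamma$, not to $R^{(j)}$). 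Letting the $\beta_i$ run through a decreasing Cartier net converging to $\alpha_i$ and invoking Proposition-Definition~\ref{prop:exists-key}(ii) then produces exactly the uniform upper bound, valid in every model simultaneously, that you say ``must be produced'' for the intermediate codimensions. You should replace your sketch of (3) by this argument.
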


\begin{ex}\label{ex:not-continuous} The above intersection product is not continuous with respect to the weak topology. Take $X = \mathbb{P}^2$ and a general line $L$ and choose a sequence of point $p_n\in \mathbb{P}^2$
converging to the generic point (i.e. for any curve $C$, the set $\{n, p_n\in C\}$ is finite).
For each $n$, consider the blow-up $\pi_n\colon X_n \to \mathbb{P}^2$ at $p_n$, and let $\alpha_n$ be 
the nef Cartier class attached to $\pi^*_n (c_1(\cO(1))) - [E_n]$ where $E_n$ is the exceptional divisor of $\pi_n$.
Then the sequence $\alpha_n$  converges weakly to the Cartier class attached to $c_1(\cO(1))$ as $n\rightarrow +\infty$, but $\alpha_n^2=0$ whereas $c_1(\cO(1))^2=1$.
\end{ex}

\begin{proof}
Compatibility with the intersection of Cartier $b$-divisor classes is obvious from the definition.
The symmetry and multilinearity follows from the corresponding properties for the intersection product of divisors. 
The fact that is it is increasing is a consequence of the observation that 
$(\alpha'_1\cdot \alpha_2 \cdot \ldots \cdot \alpha_k \cdot \gamma) \ge (\alpha_1\cdot \alpha_2 \cdot \ldots \cdot \alpha_k \cdot \gamma)$
as soon as $\alpha'_1\ge \alpha_1$. These arguments prove (1).

We fix a class $\om\in\Nef(\X)$ determined
by an ample class in $X$, 
and prove (2). Suppose  $\alpha^{(i)}_l\to \alpha_l$ weakly. 
Pick any class $\Delta$ in the accumulation locus of the net $\left(\alpha_1^{(j)}\cdot \ldots \cdot \alpha_k^{(j)} \cdot\gamma\right)$. 
We may replace the net $I$ by one indexed by the family of open neighborhoods of $\Delta$ (endowed with the partial ordering 
given by the inclusion) and assume that $\left(\alpha_1^{(j)}\cdot \ldots \cdot \alpha_k^{(j)} \cdot\gamma\right) \to \Delta$.

Let $\beta^{(i)}_l$ be nets of nef Cartier $b$-divisor classes
decreasing to $\alpha_l$.
Fix any large index $i$, and choose a model $X'$ in which $\beta^{(i)}_l$, $l=1, \cdots, k$ are all determined. 
Since $(\alpha_l^{(j)})_{X'} \to (\alpha_l)_{X'}$ in $\num^1(X')$ we have
 $(\alpha_l^{(j)})_{X'}\le  (\beta^{(i)}_l)_{X'}  + \epsilon \om$ for an arbitrary small $\epsilon>0$ and all $j$ large enough, hence
$\alpha_l^{(j)} \le  \beta^{(i)}_l + \epsilon \om$ by Lemma~\ref{lem:negativity}. 
We infer
\[
\left(\alpha_1^{(j)}\cdot \ldots \cdot \alpha_k^{(j)} \cdot \gamma \right)
\le
\left(\beta_1^{(l)}\cdot \ldots \cdot \beta_k^{(l)} \cdot \gamma \right)
 + \epsilon kC^{k-1} (\om^k\cdot \gamma)
\]
where $C>0$ is a any constant such that $\beta_l^{(i_0)}\le C\om$, $l=1,\cdots,k$, and $i_0$ is a fixed index.
We now let successively $j$ tend to infinity in the net $I$, then $\epsilon\to0$, and finally let $l$ tend to infinity in $I$.
We conclude that $\Delta \le(\alpha_1\cdot \ldots \cdot \alpha_k \cdot\gamma)$ as required.

Now suppose that $\alpha^{(j)}_i$ is a net of nef $b$-divisor classes (not necessarily Cartier) decreasing to $\alpha_i$. 
We claim that there exists a constant $C>0$ such that for any $\epsilon>0$ 
one has
\[
\left(\alpha_1\cdot \ldots \cdot \alpha_k \cdot \gamma \right)
\le
\left(\alpha_1^{(j)}\cdot \ldots \cdot \alpha_k^{(j)} \cdot \gamma \right)
\le
\left(\alpha_1\cdot \ldots \cdot \alpha_k \cdot \gamma \right)
 + \epsilon kC^{k-1} (\om^k\cdot \gamma)
\]
for all $j$ large enough.
Indeed, the first inequality is a consequence of the 
fact that the intersection product is increasing in each variable, 
and the second inequality follows from the previous proof. 
These estimates imply the convergence $\left(\alpha_1^{(j)}\cdot \ldots \cdot \alpha_k^{(j)} \cdot \gamma \right)
\to
\left(\alpha_1\cdot \ldots \cdot \alpha_k \cdot \gamma \right)$ to hold in $\num_\om^{k+l}(\X)$ which proves (3).

Finally we observe that if $\alpha$ and $\alpha'$ are two nef classes
such that $\normom{\alpha - \alpha'}\le \epsilon$, then
$ - \epsilon \om \le (\alpha - \alpha')\le  \epsilon \om$ 
so that (4) follows from (3).
\end{proof}

\smallskip 

Proposition \ref{prop_Siu}  together with Theorem~\ref{thm:approx2} yield the next two results:
\begin{cor}\label{cor:Siu}
There exists a constant $C_d>0$ depending only on $d$ such that 
for any set of classes $\alpha_1, \cdots, \alpha_k, \beta \in \Nef(\X)$ with $(\beta^d)>0$ and any $\gamma\in\BPF^l(\X)$ we have
\[ 
(\alpha_1\cdot \ldots \cdot \alpha_k\cdot\gamma) 
\le 
C_d\,
\frac
{(\alpha_1\cdot \ldots \cdot \alpha_k\cdot\gamma\cdot\beta^{d-k-l})}{(\beta^d)} \beta^{k+l} ~.
\]
\end{cor}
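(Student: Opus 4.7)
The plan is to lift Siu's inequality (Proposition \ref{prop_Siu}) from finite-dimensional numerical spaces to the $b$-class setting by an approximation argument. First I would apply Theorem \ref{thm:approx2} to obtain decreasing nets of nef Cartier $b$-divisor classes $\alpha_i^{(j)} \downarrow \alpha_i$ for $i=1,\ldots,k$ and $\beta^{(j)} \downarrow \beta$, indexed by a single inductive set obtained from the product order on the individual nets. Passing to a cofinal subnet, for each $j$ I fix a smooth model $X'_j$ dominating all the models of determination of $\alpha_1^{(j)},\ldots,\alpha_k^{(j)}$ and $\beta^{(j)}$.

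Next I would verify that in $X'_j$ the incarnation $\Delta^{(j)}_{X'_j} := (\alpha_1^{(j)}\cdots\alpha_k^{(j)}\cdot\gamma)_{X'_j}$ is BPF of codimension $k+l$: $\gamma_{X'_j}\in\BPF^l(X'_j)$ since BPF is preserved under pullback, and the $\alpha_i^{(j)}$ are nef Cartier classes determined on $X'_j$, so the product is BPF in $X'_j$. The class $\beta^{(j)}_{X'_j}$ is nef, and by Theorem \ref{thm:intersection}(3) the top intersection numbers $((\beta^{(j)})^d)$ decrease to $(\beta^d)>0$, so $((\beta^{(j)}_{X'_j})^d) > 0$ for $j$ large enough; hence $\beta^{(j)}_{X'_j}$ is big and nef. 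Applying Proposition \ref{prop_Siu} in $X'_j$ gives
\[
\Delta^{(j)}_{X'_j} \le C_d\, \frac{(\Delta^{(j)}_{X'_j}\cdot(\beta^{(j)}_{X'_j})^{d-k-l})}{((\beta^{(j)}_{X'_j})^d)} (\beta^{(j)}_{X'_j})^{k+l}.
\]
Pulling back preserves this inequality on any model dominating $X'_j$, and pushing forward preserves pseudo-effectivity, so it lifts to an inequality of $b$-classes:
\[
(\alpha_1^{(j)}\cdots\alpha_k^{(j)}\cdot\gamma) \le C_d\, \frac{(\alpha_1^{(j)}\cdots\alpha_k^{(j)}\cdot\gamma\cdot(\beta^{(j)})^{d-k-l})}{((\beta^{(j)})^d)} (\beta^{(j)})^{k+l}.
\]

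Finally I would pass to the limit along the net. By Theorem \ref{thm:intersection}(3) applied in $\num^{k+l}_\om(\X)$, the left-hand side converges to $(\alpha_1\cdots\alpha_k\cdot\gamma)$, the scalar coefficient tends to $C_d(\alpha_1\cdots\alpha_k\cdot\gamma\cdot\beta^{d-k-l})/(\beta^d)$ (the denominator is bounded away from zero), and $(\beta^{(j)})^{k+l}\downarrow\beta^{k+l}$. Since the pseudo-effective cone is closed in every numerical space, the inequality persists in the limit, yielding the desired bound. The main subtlety is merely book-keeping with nets and the choice of a common model of determination; all of the analytic content is packaged into Siu's inequality on finite-dimensional models together with the continuity of the intersection product supplied by Theorem \ref{thm:intersection}.
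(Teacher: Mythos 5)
Your overall strategy---approximate by decreasing nets of nef Cartier classes, apply Proposition~\ref{prop_Siu} in a common model, and pass to the limit using Theorem~\ref{thm:intersection}~(3) and the closedness of the pseudo-effective cones---is exactly what the paper intends (it offers no written proof beyond citing Proposition~\ref{prop_Siu} and Theorem~\ref{thm:approx2}), and your final limiting step is handled correctly. However, there is a genuine gap in the way you treat the class $\gamma$. You assert that $\gamma_{X'_j}\in\BPF^l(X'_j)$ ``since BPF is preserved under pullback'', but $\gamma$ is an arbitrary Weil class in $\BPF^l(\X)$, not a Cartier one: its incarnation in $X'_j$ is not a pullback of anything; it is a weak limit of push-forwards of BPF classes from higher models. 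The paper stresses that the image of a BPF class under a birational morphism is not BPF in general, and Theorem~\ref{thm:interpret-BPF} shows that the incarnations of a BPF $b$-class are exactly the \emph{movable} classes. Consequently $\Delta^{(j)}_{X'_j}$ is only known to be movable, and Proposition~\ref{prop_Siu} as stated does not apply to it. The same confusion undermines the next step: since $\gamma_{X''}\neq\pi^*\gamma_{X'_j}$ for $\pi\colon X''\to X'_j$, pulling back the inequality from $X'_j$ controls $\pi^*\Delta^{(j)}_{X'_j}$ rather than the incarnation $\Delta^{(j)}_{X''}$, so you have not actually produced an inequality of $b$-classes.

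Both problems disappear if you first reduce to Cartier $\gamma$: choose a net $\gamma_i\in\cBPF^l(\X)$ with $(\gamma_i)_{X'}\to\gamma_{X'}$ in every model. For fixed $i$ and $j$, all the data $\alpha_1^{(j)},\dots,\alpha_k^{(j)},\beta^{(j)},\gamma_i$ are determined in one model, where the product of the (pulled-back, genuinely BPF) incarnations is BPF; Proposition~\ref{prop_Siu} then applies in that model and, by pullback compatibility of Cartier classes, in every model above it, and push-forward gives the inequality of $b$-classes for $\gamma_i$. Letting $i\to\infty$ (multiplication by Cartier classes is weakly continuous and the pseudo-effective cones are closed in each model) yields the inequality for $\gamma$ with $\alpha^{(j)}_1,\dots,\alpha^{(j)}_k,\beta^{(j)}$ Cartier, after which your limiting argument in $j$ goes through unchanged. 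Alternatively, one can first extend Proposition~\ref{prop_Siu} to movable classes (write such a class as a limit of push-forwards $\pi_*\delta'$ of BPF classes, apply Siu upstairs to $\delta'$ and $\pi^*\beta$, and push forward via the projection formula) and then apply it model by model rather than in a single $X'_j$.
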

We obtain a "numerical" or cohomological version of the Chern-Levine-Nirenberg inequalities (\cite{chern_levine_nirenberg} and \cite[(3.3)]{demailly_agbook}). 
\begin{cor} \label{cor:CLN}
There exists a constant $C_d>0$ depending only on $d$  such that for any $\alpha_1, \ldots , \alpha_k \in \Nef(\X)$ and for any $\gamma \in \BPF^{d-k}(\X)$, we have:
\begin{equation*}
0 \leqslant (\alpha_1 \cdot \ldots \cdot \alpha_k \cdot \gamma) \leqslant C_d \normbpf{\alpha_1} \cdot \ldots\cdot \normbpf{\alpha_k} \normbpf{\gamma}.
\end{equation*}
\end{cor}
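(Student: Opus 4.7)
The plan is to deduce the inequality from an iterated application of Siu's inequality on a common model, after approximating each nef $b$-divisor $\alpha_i$ from above by Cartier nef classes via Theorem~\ref{thm:approx2}. First I would dispatch non-negativity: choose decreasing nets $\alpha_i^{(j)}\downarrow\alpha_i$ of nef Cartier $b$-divisor classes; for any fixed $j$ they can all be determined on a common model $X'$ by nef classes, and their intersection with $\gamma$ (whose incarnation is BPF in $X'$) is non-negative. Letting $j\to\infty$ and invoking the continuity of the intersection along decreasing nets (Theorem~\ref{thm:intersection}(3)) then gives $(\alpha_1\cdots\alpha_k\cdot\gamma)\ge 0$.

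For the upper bound, monotonicity (Theorem~\ref{thm:intersection}(1)) reduces me to bounding $(\alpha_1^{(j)}\cdots\alpha_k^{(j)}\cdot\gamma)$ for the Cartier approximants. On a common model $X'$ where all the $\alpha_i^{(j)}$ are determined, I would apply Proposition~\ref{prop_Siu} to each nef class $(\alpha_i^{(j)})_{X'}$ with $\beta=\om_{X'}$ and combine with Lemma~\ref{lem_straightforward} to obtain the pointwise inequality
\[(\alpha_i^{(j)})_{X'}\;\le\;\frac{C_d}{(\om^d)}\,\normbpf{\alpha_i^{(j)}}\,\om_{X'}.\]
Multiplying these $k$ inequalities together in $\num^{\bullet}(X')$ is legitimate since intersection with a nef class preserves the pseudoeffective order; after $k$ iterations and a final intersection with the pseudoeffective class $\gamma_{X'}$, I would obtain
\[(\alpha_1^{(j)}\cdots\alpha_k^{(j)}\cdot\gamma)\;\le\;\frac{C_d^k}{(\om^d)^k}\,\Bigl(\prod_{i=1}^k\normbpf{\alpha_i^{(j)}}\Bigr)\,(\om^k\cdot\gamma).\]

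To conclude, I would pass to the limit $j\to\infty$: weak continuity of the intersection against the fixed Cartier class $\om^{d-1}$ ensures $\normbpf{\alpha_i^{(j)}}=(\alpha_i^{(j)}\cdot\om^{d-1})\to(\alpha_i\cdot\om^{d-1})=\normbpf{\alpha_i}$. It remains to bound $(\om^k\cdot\gamma)$ by $\normbpf{\gamma}$: I would extend the proof of Lemma~\ref{lem_straightforward} mutatis mutandis to BPF classes of codimension $d-k$, obtaining $(\om^k\cdot\gamma)=\normbpf{\gamma}$ for $\gamma\in\BPF^{d-k}(\X)$. Absorbing $(\om^d)^{-k}$ into a new constant depending only on $d$ (after fixing a normalization of $\om$) completes the proof. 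The main technical point, I expect, is the iteration of Siu's inequality: it requires a routine but error-prone chain of checks that each successive partial product remains pseudoeffective, so that multiplying the pointwise bounds is order-preserving.
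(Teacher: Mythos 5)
Your strategy is the one the paper intends (the paper gives no details beyond citing Proposition~\ref{prop_Siu} and Theorem~\ref{thm:approx2}), and most of it goes through: the non-negativity, the application of Siu's inequality to each factor on a common model, the identity $(\om^k\cdot\gamma)=\normbpf{\gamma}$ for $\gamma\in\BPF^{d-k}(\X)$, and the convergence of the norms are all fine. The gap is the ``final intersection with the pseudoeffective class $\gamma_{X'}$''. At that point you have $A:=\prod_i(\alpha^{(j)}_i)_{X'}\le B:=\bigl(\prod_i c_i\bigr)\om^k_{X'}$ in $\num^k(X')$ and you need $\bigl((B-A)\cdot\gamma_{X'}\bigr)\ge0$; the pseudo-effectivity of $\gamma_{X'}$ does not give this, since the pairing of two psef classes in complementary codimensions can be negative (already on a surface: a curve of negative self-intersection). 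The relevant property of $\gamma_{X'}$ is that it is \emph{movable} (Theorem~\ref{thm:interpret-BPF}), but the non-negativity of $(\mathrm{psef})\cdot(\mathrm{movable})$ in complementary codimensions is precisely the first half of Conjecture~\ref{conj:dual-BPF}, which is open for $2\le k\le d-2$. So, as justified, this step rests on an unproved duality.

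The step is salvageable because $B-A$ is not an arbitrary psef class: your telescoping exhibits it as a sum of terms $D\cdot N_1\cdots N_{k-1}$ with $D$ a psef divisor class and $N_1,\dots,N_{k-1}$ nef divisor classes. Regrouping each term as $D\cdot(N_1\cdots N_{k-1}\cdot\gamma_{X'})$ and noting that $N_1\cdots N_{k-1}\cdot\gamma_{X'}$ is the incarnation of a class of $\BPF^{d-1}(\X)$, hence a movable curve class, the duality between psef divisor classes and movable curve classes of~\cite{boucksom_demailly_paun_peternell} gives the required non-negativity. A cleaner route, which avoids descending to $X'$ altogether, is to stay at the level of $b$-classes: Corollary~\ref{cor:Siu} with $k=1$ together with Lemma~\ref{lem_straightforward} gives $\alpha_i\le c_i\,\om$ in $\Nef(\X)$ with $c_i=C_d\,\normbpf{\alpha_i}/(\om^d)$, and since $c_i\om$ is nef \emph{Cartier}, property~\eqref{eq:inf-nef} of Proposition-Definition~\ref{prop:exists-key} immediately yields $(\alpha_1\cdots\alpha_k\cdot\gamma)\le\prod_i c_i\,(\om^k\cdot\gamma)= C_d^k(\om^d)^{-k}\prod_i\normbpf{\alpha_i}\,\normbpf{\gamma}$, with no pointwise multiplication of inequalities needed.
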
   

\subsection{Diskant inequalities and applications}\label{sec:Diskant}

\begin{thm}
Pick any two classes $\alpha,\beta\in \Nef(\X)$ such that $(\alpha^d)>0$ and $(\beta^d)>0$.
Let $s> 0$ be the largest positive number such that 
$\alpha - s \beta\ge0$.
Then we have:
\begin{equation}\label{eq:Diskant}
(\alpha^{d-1}\cdot\beta)^{\frac{d}{d-1}} - (\alpha^d)(\beta^d)^{\frac{1}{d-1}} \ge 
\left( 
(\alpha^{d-1}\cdot\beta)^{\frac{1}{d-1}}
- s (\beta^d)^{\frac1{d-1}}
\right)^d.
\end{equation} 
\end{thm}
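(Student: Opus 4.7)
The plan is to deduce the inequality from its classical counterpart for nef and big divisor classes on a smooth projective variety (see Theorem~F of~\cite{boucksom_favre_jonsson_volume}, or~\cite{MR3368254}), after approximating $\alpha$ and $\beta$ by decreasing nets of nef Cartier $b$-divisor classes and passing to the limit.

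\emph{Step 1 (Cartier approximation).} Applying Theorem~\ref{thm:approx2} to $\alpha$ and $\beta$ and taking the product of the two resulting inductive sets of indices, we obtain decreasing nets $(\alpha^{(i)})_{i\in I},(\beta^{(i)})_{i\in I}\subset\cNef(\X)$ with $\alpha^{(i)}\downarrow\alpha$ and $\beta^{(i)}\downarrow\beta$. We may assume $\alpha^{(i)}$ and $\beta^{(i)}$ are jointly determined in a common smooth model $X_i$, and by Theorem~\ref{thm:cartier_nef} their incarnations $\alpha^{(i)}_{X_i},\beta^{(i)}_{X_i}\in\num^1(X_i)$ are nef. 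By Theorem~\ref{thm:intersection}(3),
\[
((\alpha^{(i)})^d)\downarrow(\alpha^d),\qquad ((\beta^{(i)})^d)\downarrow(\beta^d),\qquad ((\alpha^{(i)})^{d-1}\cdot\beta^{(i)})\downarrow(\alpha^{d-1}\cdot\beta),
\]
and since $(\alpha^d),(\beta^d)>0$ the approximants $\alpha^{(i)}_{X_i},\beta^{(i)}_{X_i}$ are nef and big on $X_i$ for all sufficiently large~$i$.

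\emph{Step 2 (Diskant on each model).} Set $s_i:=\sup\{t\ge 0:\alpha^{(i)}-t\beta^{(i)}\ge 0\}$, which is positive, finite, and characterized by the condition $\alpha^{(i)}_{X_i}-s_i\beta^{(i)}_{X_i}\in\psef^1(X_i)$. The classical Diskant inequality applied on the smooth projective variety $X_i$ yields
\begin{equation}\label{eq:classical-diskant}
X_i^{\,d}-A_iY_i\;\ge\;(X_i-s_iY_i)^d,
\end{equation}
where we denote $X_i:=((\alpha^{(i)})^{d-1}\cdot\beta^{(i)})^{1/(d-1)}$, $Y_i:=((\beta^{(i)})^d)^{1/(d-1)}$ and $A_i:=((\alpha^{(i)})^d)$; note that $X_i\ge s_iY_i\ge 0$.

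\emph{Step 3 (Passage to the limit).} Let $X:=(\alpha^{d-1}\cdot\beta)^{1/(d-1)}$, $Y:=(\beta^d)^{1/(d-1)}$, $A:=(\alpha^d)$, so $X_i\downarrow X$, $Y_i\downarrow Y$, $A_i\downarrow A$. We first observe that $X\ge sY\ge 0$: intersecting $\alpha\ge s\beta$ successively with the BPF classes $\alpha^{d-2}\cdot\beta,\alpha^{d-3}\cdot\beta^2,\ldots,\beta^{d-1}$, using the multilinearity and the monotonicity in each nef variable (Theorem~\ref{thm:intersection}(1)), we get
\[
(\alpha^{d-1}\cdot\beta)\ge s(\alpha^{d-2}\cdot\beta^2)\ge\cdots\ge s^{d-1}(\beta^d),
\]
and taking $(d-1)$-th roots gives $X\ge sY$. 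Next, we claim that $\limsup_i s_i\le s$: along any subnet converging to some $s^*\in[0,\infty)$, the inequality $\alpha^{(i)}-s_i\beta^{(i)}\ge 0$ passes to the weak limit because the pseudo-effective cone of each model is closed for the product topology, yielding $\alpha-s^*\beta\ge 0$ and hence $s^*\le s$ by maximality of~$s$. Finally, extracting a subnet with $s_i\to s^*\le s$ and passing to the limit in~\eqref{eq:classical-diskant},
\[
X^d-AY\ge(X-s^*Y)^d;
\]
since $0\le X-sY\le X-s^*Y$ and $u\mapsto u^d$ preserves the order on $[0,\infty)$, we conclude $(X-s^*Y)^d\ge(X-sY)^d$, which is precisely~\eqref{eq:Diskant} after rewriting.

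\emph{Main obstacle.} The delicate point is that the natural approximation of nef $b$-divisors is from above (both $\alpha^{(i)}\ge\alpha$ and $\beta^{(i)}\ge\beta$), so the optimal ratios $s_i$ for the pairs $(\alpha^{(i)},\beta^{(i)})$ may fail to be bounded below by~$s$. What saves the argument is the asymmetry of the Diskant bound: because $t\mapsto(X-tY)^d$ is decreasing on $[0,X/Y]$, only the one-sided control $\limsup s_i\le s$, which follows from the weak closedness of $\psef$, is needed to transfer the classical inequality to the $b$-divisor setting.
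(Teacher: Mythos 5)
Your proposal is correct and follows essentially the same route as the paper: approximate $\alpha,\beta$ by decreasing nets of nef Cartier classes, invoke the classical Diskant inequality of \cite[Theorem~F]{boucksom_favre_jonsson_volume} on each model, and pass to the limit using continuity of the intersection product along decreasing nets together with the closedness of the pseudo-effective cones. The only (cosmetic) difference is the bookkeeping at the limit: the paper rewrites Diskant as a lower bound $\tau_j\le s_j$ and deduces $\alpha-\tau\beta\ge0$, hence $s\ge\tau$, whereas you keep the inequality as is and use $\limsup_i s_i\le s$ together with the monotonicity of $t\mapsto(X-tY)^d$ — two equivalent ways of exploiting the same one-sided control.
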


\begin{rem}
The previous theorem was proved by S. D. Cutkosky for nef and big Cartier $b$-divisors  in arbitrary characteristic,~\cite[Theorem~6.9]{MR3368254}.
It was proved for big and movable divisors and X is a projective variety over an algebraically closed field of characteristic zero in~\cite[Proposition 3.3, Remark 3.4]{lehmann_xiao_positivity}.
S.D. Cutkosky extended this result recently to envelopes of big divisors in characteristic zero,~\cite[Theorem~1.4]{cutkosky-preprint}.
\end{rem}

\begin{proof}
Pick any two nets $\alpha_j, \beta_j\in\cNef(\X)$ parametrized by an inductive set $J$ and decreasing
to $\alpha$ and $\beta$ respectively.
Fix any $\epsilon >0$.

Observe that $(\alpha_j^d) \ge (\alpha^d)>0$ and 
 $(\beta_j^d) \ge (\beta^d)>0$ so that $\alpha_j$ and $\beta_j$ are big.
Tthe largest non-negative number $s_j$ such that 
$\alpha_j - s_j \beta_j\ge0$ is thus positive.
By~\cite[Theorem~F]{boucksom_favre_jonsson_volume}, we have
\begin{equation*}
(\alpha_j^{d-1}\cdot\beta_j)^{\frac{d}{d-1}} - (\alpha_j^d)(\beta_j^d)^{\frac{1}{d-1}} \ge 
\left( 
(\alpha_j^{d-1}\cdot\beta_j)^{\frac{1}{d-1}}
- s_j (\beta_j^d)^{\frac1{d-1}}
\right)^d.
\end{equation*} 
In other words, $s_j\ge \tau_j$ where
\[
\tau_j
=
\frac1{(\beta_j^d)^{\frac1{d-1}}}
(\alpha_j^{d-1}\cdot\beta_j)^{\frac{1}{d-1}}
- 
\frac1{(\beta_j^d)^{\frac1{d-1}}}
\left((\alpha_j^{d-1}\cdot\beta_j)^{\frac{d}{d-1}} - (\alpha_j^d)(\beta_j^d)^{\frac{1}{d-1}}\right)^{\frac1d}.
\]
By Theorem~\ref{thm:intersection} (3), we have
\[
\lim_J \tau_j = \tau :=
\frac1{(\beta^d)^{\frac1{d-1}}}
(\alpha^{d-1}\cdot\beta)^{\frac{1}{d-1}}
- 
\frac1{(\beta^d)^{\frac1{d-1}}}
\left((\alpha^{d-1}\cdot\beta)^{\frac{d}{d-1}} - (\alpha^d)(\beta^d)^{\frac{1}{d-1}}\right)^{\frac1d},
\]
and we obtain $\alpha - \tau \beta \ge 0$ as required.
\end{proof}

We now complete the proof of Theorem~\ref{thm:approx-nef}.

\begin{proof}[Proof of Theorem~\ref{thm:approx-nef}]
Let $\alpha$ be any nef $b$-divisor class. By adding $\frac1n \om$ with $\om$ a fixed Cartier $b$-divisor class determined in $X$
by an ample divisor, we may suppose that $\alpha$ is big so that $(\alpha^d)>0$.

Let $\alpha_j$ be a net of nef Cartier $b$-divisor classes decreasing to $\alpha$.
Diskant inequalities yield $\alpha \ge s_j \alpha_j$ with 
\[
s_j:= 
\frac1{(\alpha_j^d)^{\frac1{d-1}}}
(\alpha^{d-1}\cdot\alpha_j)^{\frac{1}{d-1}}
- 
\frac1{(\alpha_j^d)^{\frac1{d-1}}}
\left((\alpha^{d-1}\cdot\alpha_j)^{\frac{d}{d-1}} - (\alpha^d)(\alpha_j^d)^{\frac{1}{d-1}}\right)^{\frac1d}.
\]
Since $(\alpha_j^d)\to (\alpha^d)$, and $(\alpha^{d-1}\cdot\alpha_j)\to (\alpha^d)$, we
may extract a subsequence $i_n$ such that 
$\alpha_{i_n}$ is still decreasing, and $s_{i_n} \to 1$
which implies $\alpha_{i_n} \to \alpha$.
\end{proof}
The same arguments yield:

\begin{cor} \label{cor_best_approx}
For any class $\alpha\in \Nef(\X)$ such that $(\alpha^d)>0$ and for any $\epsilon>0$, 
there exists a class $\alpha'\in\cNef(\X)$ such that 
$(1-\epsilon) \alpha' \le \alpha \le \alpha'$.
\end{cor}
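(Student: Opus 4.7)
The plan is to reuse the mechanism of the proof of Theorem~\ref{thm:approx-nef} verbatim: the Diskant inequality already establishes a quantitative approximation of the form $\alpha \ge s_j \alpha_j$ with an explicit expression for $s_j$ in terms of intersection numbers, and the only additional observation needed is that $s_j \to 1$ forces $\alpha_j$ to eventually satisfy the required two-sided inequality.

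First I would invoke Theorem~\ref{thm:approx-nef} to obtain a decreasing sequence $(\alpha_n)_{n \ge 0} \in \cNef(\X)$ converging to $\alpha$. Since $(\alpha^d) > 0$ and $\alpha_n \ge \alpha$, the Cartier class $\alpha_n$ is automatically big for every $n$, so the Diskant inequality of the previous theorem applies to the pair $(\alpha, \alpha_n)$. This yields $\alpha \ge s_n \alpha_n$ where
\[
s_n = \frac{1}{(\alpha_n^d)^{\frac{1}{d-1}}}(\alpha^{d-1}\cdot\alpha_n)^{\frac{1}{d-1}} - \frac{1}{(\alpha_n^d)^{\frac{1}{d-1}}}\bigl((\alpha^{d-1}\cdot\alpha_n)^{\frac{d}{d-1}} - (\alpha^d)(\alpha_n^d)^{\frac{1}{d-1}}\bigr)^{\frac{1}{d}}.
\]

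Next I would use the continuity of the intersection product along decreasing sequences (Theorem~\ref{thm:intersection}(3)) to conclude that $(\alpha_n^d) \downarrow (\alpha^d)$ and $(\alpha^{d-1}\cdot\alpha_n) \downarrow (\alpha^d)$. Substituting these limits into the expression above shows that $s_n \to 1$ as $n \to \infty$. Given $\epsilon > 0$, pick $n$ large enough that $s_n \ge 1 - \epsilon$, and set $\alpha' := \alpha_n$. Then $\alpha \le \alpha'$ because the original sequence decreases to $\alpha$, and $(1-\epsilon)\alpha' \le s_n \alpha' \le \alpha$ by the Diskant estimate, completing the argument.

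There is no serious obstacle here: the Diskant inequality has already been proved, and the required continuity of the intersection product on decreasing sequences was established in Theorem~\ref{thm:intersection}. The only subtlety is to notice that the convergence $(\alpha^{d-1}\cdot\alpha_n) \to (\alpha^d)$ must be extracted from property~(3) applied to the mixed product, which is legitimate since $\alpha^{d-1} \in \BPF^{d-1}(\X)$ and $\alpha_n \downarrow \alpha$ in $\Nef(\X)$.
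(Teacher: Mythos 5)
Your proposal is correct and follows exactly the paper's route: the paper derives this corollary by "the same arguments" as its proof of Theorem~\ref{thm:approx-nef}, namely taking a decreasing family of nef Cartier approximants, applying the $b$-divisor Diskant inequality to get $\alpha \ge s_n\alpha_n$ with the explicit $s_n$, and using Theorem~\ref{thm:intersection}(3) to see $s_n\to1$. Nothing to add.
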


This corollary allows one to obtain  better continuity properties of the intersection product. 

\begin{cor}\label{cor:weak-limits}
For any $\alpha_1, \cdots, \alpha_k\in\Nef(\X)$, the map
\[
\gamma \in \BPF^l(\X) \mapsto (\alpha_1\cdot \alpha_2 \cdot \ldots \cdot \alpha_k\cdot\gamma)\in\BPF^{k+l}(\X)
\]
is weakly continuous. 
\end{cor}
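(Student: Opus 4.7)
The plan is to sandwich the intersection $(\alpha_1\cdot\ldots\cdot\alpha_k\cdot\gamma)$ between intersections in which all nef divisor factors are Cartier, where weak continuity in $\gamma$ is essentially trivial. Indeed, if $\beta_1,\ldots,\beta_k\in\cNef(\X)$ are all determined in some common model $X_0$, then in any $X'\ge X_0$ the incarnation
\[
(\beta_1\cdot\ldots\cdot\beta_k\cdot\gamma)_{X'}=(\beta_1)_{X'}\cdot\ldots\cdot(\beta_k)_{X'}\cdot\gamma_{X'}
\]
depends linearly on $\gamma_{X'}\in\num^l(X')$; since weak convergence in $\wnum^l(\X)$ amounts to convergence in every model, the map $\gamma\mapsto(\beta_1\cdot\ldots\cdot\beta_k\cdot\gamma)$ is automatically weakly continuous in the Cartier case.

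Suppose first that $(\alpha_i^d)>0$ for every $i$, and let $\gamma_n\to\gamma$ be a net in $\BPF^l(\X)$. Fix a model $X'$; it suffices to show that the bounded net $\delta_n:=(\alpha_1\cdot\ldots\cdot\alpha_k\cdot\gamma_n)_{X'}\in\num^{k+l}(X')$ admits $\delta:=(\alpha_1\cdot\ldots\cdot\alpha_k\cdot\gamma)_{X'}$ as its unique accumulation point. For an upper bound, Theorem~\ref{thm:approx-nef} furnishes Cartier $\beta_i^{(t)}\downarrow\alpha_i$ with $\beta_i^{(t)}\in\cNef(\X)$, so monotonicity yields $\delta_n\le(\beta_1^{(t)}\cdot\ldots\cdot\beta_k^{(t)}\cdot\gamma_n)_{X'}$. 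Taking any accumulation point $X_\infty$ of $\delta_n$, letting $n\to\infty$ by the Cartier case and closedness of $\psef^{k+l}(X')$, and then $t\to\infty$ via Proposition-Definition~\ref{prop:exists-key}(2), we obtain $\delta-X_\infty\in\psef^{k+l}(X')$. For the lower bound, Corollary~\ref{cor_best_approx} provides, for each $\epsilon>0$, Cartier $\alpha_i'\in\cNef(\X)$ with $(1-\epsilon)\alpha'_i\le\alpha_i\le\alpha'_i$; monotonicity gives $(1-\epsilon)^k(\alpha'_1\cdot\ldots\cdot\alpha'_k\cdot\gamma_n)_{X'}\le\delta_n$, and the Cartier case together with the closedness of $\psef^{k+l}(X')$ yields $X_\infty-(1-\epsilon)^k(\alpha'_1\cdot\ldots\cdot\alpha'_k\cdot\gamma)_{X'}\in\psef^{k+l}(X')$. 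Since $\delta\le(\alpha'_1\cdot\ldots\cdot\alpha'_k\cdot\gamma)_{X'}$, this forces $X_\infty-(1-\epsilon)^k\delta\in\psef^{k+l}(X')$; sending $\epsilon\to 0$ and using once more the closedness of the cone gives $X_\infty-\delta\in\psef^{k+l}(X')$. Combined with the upper bound and the salience of $\psef^{k+l}(X')$, we conclude $X_\infty=\delta$.

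The bigness assumption is removed by replacing $\alpha_i$ with $\tilde\alpha_i:=\alpha_i+\eta\om$ and expanding $(\tilde\alpha_1\cdot\ldots\cdot\tilde\alpha_k\cdot\gamma_n)$ multilinearly; the extra terms all involve at least one factor of the Cartier class $\om$ and are uniformly bounded in $n$ thanks to the Chern-Levine-Nirenberg estimates of Corollary~\ref{cor:CLN} together with the boundedness of the pairings $(\gamma_n\cdot\om^{d-l})$ inherited from weak convergence. A double limit $n\to\infty$ then $\eta\to 0$ recovers the general case. The main technical obstacle is to orchestrate all these limits ($n$, $t$, $\epsilon$, and $\eta$) while working only with the partial order induced by the pseudo-effective cone in each model, rather than with a norm; the conceptual linchpin is Corollary~\ref{cor_best_approx}, without which no Cartier lower sandwich on a general nef $b$-divisor class is available and the upper and lower bounds could not be matched.
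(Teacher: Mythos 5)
Your proof is correct and follows essentially the same route as the paper: reduce to big nef factors by adding $\om$ and using multilinearity, then sandwich $(\alpha_1\cdots\alpha_k\cdot\gamma_n)$ between $(1-\epsilon)^k(\alpha'_1\cdots\alpha'_k\cdot\gamma_n)$ and $(\alpha'_1\cdots\alpha'_k\cdot\gamma_n)$ using the multiplicative Cartier approximation of Corollary~\ref{cor_best_approx}, and conclude from the trivial Cartier case. The only real difference is cosmetic: your upper bound via Theorem~\ref{thm:approx-nef} and Proposition-Definition~\ref{prop:exists-key}(2) is redundant, since monotonicity already gives $\delta_n\le(\alpha'_1\cdots\alpha'_k\cdot\gamma_n)$ from $\alpha_i\le\alpha'_i$, and the paper's bigness reduction writes $\alpha_i=(\alpha_i+\om)-\om$ directly, avoiding your extra limit $\eta\to0$.
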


\begin{proof}
When $\alpha_i$ are Cartier, the result is clear. By multilinearity and by replacing $\alpha_i$ by $\alpha_i + \om$, we may suppose that 
$(\alpha_i^d)>0$ for all $i$. Pick any $\epsilon>0$. 
By the previous corollary, one can find  $\alpha'_i\in\cNef(\X)$ such that 
$(1-\epsilon) \alpha_i' \le \alpha_i \le \alpha_i'$. It follows that 
\begin{align*}
0\le
(\alpha'_1 \cdot \ldots \cdot \alpha'_k\cdot\gamma)
-(\alpha_1\cdot \ldots \cdot \alpha_k\cdot\gamma)
&\le 
((1-\epsilon)^{-k} -1) 
(\alpha_1\cdot \ldots \cdot \alpha_k\cdot\gamma)
\\
&\le C \epsilon (\om^k\cdot\gamma)~.
\end{align*}
Suppose $\gamma_n \to \gamma$ weakly in $\BPF^l(\X)$.
Then 
\begin{align*}
\liminf_n
(\alpha_1\cdot \ldots \cdot \alpha_k\cdot\gamma_n)
&\ge 
\liminf_n\, 
(\alpha'_1 \cdot \ldots \cdot \alpha'_k\cdot\gamma_n)
- C \epsilon (\om^k\cdot\gamma_n)
\\
&=
(\alpha'_1 \cdot \ldots \cdot \alpha'_k\cdot\gamma)
- C \epsilon (\om^k\cdot\gamma)
\ge
(\alpha_1 \cdot \ldots \cdot \alpha_k\cdot\gamma)
- C \epsilon (\om^k\cdot\gamma),
\end{align*}
which implies  $\liminf_n
(\alpha_1\cdot \ldots \cdot \alpha_k\cdot\gamma_n)\ge
(\alpha_1\cdot \ldots \cdot \alpha_k\cdot\gamma)$.

We conclude using Theorem~\ref{thm:intersection} (1).
\end{proof}

Recall the definition of $\nums(\X)$ from \S\ref{sec:3Banach}.

\begin{cor} \label{cor:nef_in_nums} 
The following inclusion holds:
\[\Nef(\X)\subset \nums(\X).\] 
More precisely, for any nef class $\alpha\in\Nef(\X)$ there exists a sequence of nef Cartier $b$-divisor classes
$\alpha_n\in\cNef(\X)$ such that 
$\normsigma{\alpha-\alpha_n} \to 0$.

In particular, we have  following continuous injection
\begin{equation*}
(\Vect(\Nef(\X)), \normbpf{\cdot} ) \hookrightarrow (\nums(\X),\normsigma{\cdot}).
\end{equation*}
 \end{cor}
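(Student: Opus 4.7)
The plan is to establish the first assertion by showing that the decreasing approximation of $\alpha$ provided by Theorem~\ref{thm:approx-nef} is already Cauchy in $\normsigma{\cdot}$, and then to derive the continuous injection from Siu's inequality. The central technical input will be the $\normom{\cdot}$-continuity of the intersection product recorded in Theorem~\ref{thm:intersection}(3), which is precisely what will let us pass from pointwise control of the Hodge-type form to uniform control in the auxiliary class $\gamma$.

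Fix $\alpha\in\Nef(\X)$ and pick a decreasing sequence $\alpha_n\in\cNef(\X)$ with $\alpha_n\downarrow\alpha$ using Theorem~\ref{thm:approx-nef}. Writing $\mu_{mn}:=\alpha_m-\alpha_n\in\cnum^1(\X)$, I would first observe the elementary bound
\[
\normsigma{\mu_{mn}}^2 \;\le\; 2\,\normom{\mu_{mn}\cdot\om}^2 \;+\; \normom{\mu_{mn}^2},
\]
which follows by expanding the definition of $\normsigma{\cdot}^2$ and using the fact that $|(\xi\cdot\gamma)|\le\normom{\xi}$ for any $\xi\in\cnum^2(\X)$ and any $\gamma\in\cBPF^{d-2}(\X)$ normalized by $(\om^2\cdot\gamma)=1$ (this is immediate from $-\normom{\xi}\om^2\le\xi\le\normom{\xi}\om^2$ by pairing with $\gamma$). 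The second step is to apply Theorem~\ref{thm:intersection}(3) to the decreasing sequence $\alpha_n\cdot\om \downarrow \alpha\cdot\om$ and to the decreasing net $(m,n)\mapsto \alpha_m\cdot\alpha_n$ (convergent in $\num^2_\om(\X)$ to $\alpha^2$), and then to use the identity $\mu_{mn}^2=(\alpha_m^2-\alpha^2)-2(\alpha_m\alpha_n-\alpha^2)+(\alpha_n^2-\alpha^2)$ together with the triangle inequality to conclude $\normom{\mu_{mn}\cdot\om}\to 0$ and $\normom{\mu_{mn}^2}\to 0$. This shows that $(\alpha_n)$ is Cauchy in $\normsigma{\cdot}$; its limit $\tilde\alpha\in\nums(\X)$ is forced to coincide with $\alpha$ by the continuous injection $\nums(\X)\hookrightarrow\wnum^1(\X)$ from~\eqref{eq_known_inclusions} together with the Hausdorffness of the weak topology.

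For the continuity of the inclusion $(\Vect(\Nef(\X)),\normbpf{\cdot})\hookrightarrow(\nums(\X),\normsigma{\cdot})$, I would first bound $\normsigma{\beta}$ for a general $\beta\in\Nef(\X)$. By Corollary~\ref{cor:Siu} combined with Lemma~\ref{lem_straightforward}, one has $\beta\le (C_d/(\om^d))\normbpf{\beta}\,\om$, hence $(\beta\cdot\om\cdot\gamma)\le C\normbpf{\beta}$ for any normalized $\gamma$; since $(\beta^2\cdot\gamma)\ge 0$ for $\beta^2\in\BPF^2(\X)$ paired with a BPF class, this yields $\normsigma{\beta}\le C'\normbpf{\beta}$. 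Writing an arbitrary $\alpha\in\Vect(\Nef(\X))$ as $\alpha_+-\alpha_-$ with $\alpha_\pm\in\Nef(\X)$ (each of which lies in $\nums(\X)$ by the first part), the triangle inequality and the definition of $\normbpf{\cdot}$ as an infimum over such decompositions then give $\normsigma{\alpha}\le C'\normbpf{\alpha}$.

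The main obstacle I expect is obtaining \emph{uniform} (in $\gamma$) control of the two terms appearing in $\normsigma{\cdot}^2$: pointwise convergence of intersection numbers against a fixed $\gamma$ would not suffice. This is resolved precisely by the fact that Theorem~\ref{thm:intersection}(3) asserts convergence of intersection products in $\num^2_\om(\X)$ rather than merely in the weak topology, which translates the required uniformity into the clean inequality $|(\xi\cdot\gamma)|\le\normom{\xi}$.
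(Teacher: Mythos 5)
Your proof is correct, and its skeleton matches the paper's: approximate $\alpha$ by nef Cartier classes, show the approximants are $\normsigma{\cdot}$-Cauchy, identify the limit via the weak topology, and deduce boundedness of the inclusion from the identity $\normbpf{\beta}=(\beta\cdot\om^{d-1})$ for nef $\beta$. The difference is in how the Cauchy estimate is produced. The paper does not work with the merely decreasing approximation of Theorem~\ref{thm:approx-nef}; it invokes Corollary~\ref{cor_best_approx} to get a \emph{multiplicative} sandwich $(1+\tfrac1n)^{-1}\alpha_n\le\alpha\le\alpha_n$, so that for $m\ge n$ the difference $\alpha_n-\alpha_m$ is pinched between $\pm\tfrac{C}{n}\alpha_m$, and then the Chern--Levine--Nirenberg inequalities (Corollary~\ref{cor:CLN}) bound $\sup_\gamma(\alpha_m\cdot\om\cdot\gamma)$ and $\sup_\gamma(\alpha_m^2\cdot\gamma)$ uniformly, giving the explicit rate $\normsigma{\alpha_n-\alpha_m}^2\le C/n$. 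You instead keep the additive decreasing approximation and route the uniformity in $\gamma$ through the inequality $\normsigma{\mu}^2\le 2\normom{\mu\cdot\om}^2+\normom{\mu^2}$ combined with the $\num^2_\om(\X)$-convergence of $\alpha_n\cdot\om$ and of the double net $\alpha_m\cdot\alpha_n$ furnished by Theorem~\ref{thm:intersection}(3). Both mechanisms are legitimate and both ultimately rest on Siu-type estimates; the paper's buys an explicit rate, yours is more modular since the hard work is delegated to the continuity theorem. Two points to tighten: (i) the bound $|(\xi\cdot\gamma)|\le\normom{\xi}$ for normalized $\gamma\in\cBPF^{d-2}(\X)$ uses that the infimum defining $\normom{\cdot}$ is attained together with Proposition~\ref{prop:dual-easy}, which deserves a citation; (ii) in the final paragraph you apply the supremum formula \eqref{def_norm_l2} to a non-Cartier nef class $\beta$, whereas that formula is a priori only the definition on $\cnum^1(\X)$ --- either prove $\normsigma{\beta}\le C'\normbpf{\beta}$ for Cartier nef classes and pass to the limit along your approximating sequence (using $\normbpf{\beta_n}=(\beta_n\cdot\om^{d-1})\to(\beta\cdot\om^{d-1})=\normbpf{\beta}$), or justify that the completed norm is still computed by that supremum.
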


\begin{proof}
Let $\alpha$ be any nef class.
Since any Cartier class belongs to $\nums(\X)$, by adding $\om$ if necessary, we may assume that $(\alpha^d)>0$.
By Corollary \ref{cor_best_approx}, for each $n$, one can take a nef Cartier class $\alpha_n$ such that 
$(1+\frac1n)^{-1} \alpha_n\le \alpha \le \alpha_n$.
Using Corollary~\ref{cor:CLN}, we infer for all integers $m\ge n$:  
\begin{align*}
\normsigma{\alpha_n - \alpha_m}^2
&=
\sup_{ \substack{\gamma \in \cBPF^{d-2}(\X) \\
(\gamma \cdot \om^2) = 1}}   2 ((\alpha_n - \alpha_m)\cdot  \om \cdot \gamma )^2  -  ( (\alpha_n - \alpha_m)^2 \cdot \gamma)
\\
&\le
\sup_{ \substack{\gamma \in \cBPF^{d-2}(\X) \\
(\gamma \cdot \om^2) = 1}}
\frac2{n^2}  (\alpha_m \cdot  \om \cdot \gamma )^2 + \frac2{n} (\alpha_m^2\cdot \gamma)
\le \frac{C}n
\end{align*}
for some constant $C>0$. 
This shows that $(\alpha_n)$ is a Cauchy sequence in $\nums(\X)$. But since it converges weakly to $\alpha$
we conclude that $\alpha$ belongs to $\nums(\X)$ and $\normsigma{\alpha_n-\alpha} \to0$.

By linearity, we get $\Vect(\Nef(\X)) \subset \nums(\X)$ and  the continuity follows from the fact that $\normbpf{\alpha} = (\alpha \cdot \om^{d-1})$ for all $\alpha \in \Nef(\X)$ by Lemma \ref{lem_straightforward}. 
\end{proof}

We now relate the intersection product of nef divisors with the intersection product $\nums(\X) \times \nums(\X) \to \numbpfs{2}(\X)$ recalled in Theorem \ref{thm_intersection_nums}. 
Denote by  $\imath\colon \Nef(X)\to \nums(\X)$ the natural inclusion map given be the previous corollary.

\begin{cor}\label{cor:compat-nums} 
For any two nef $b$-divisor classes $\alpha,\beta$, 
the class $\alpha\cdot \beta\in\BPF^2(\X)$ defined by Proposition-Definition~\ref{prop:exists-key}
is identical to the intersection product  $\imath(\alpha)\cdot\imath(\beta)\in \numbpfs{2}(\X)$
given by Theorem \ref{thm_intersection_nums}. 
\end{cor}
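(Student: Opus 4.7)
The plan is to produce a single sequence of nef Cartier $b$-divisor approximants of $\alpha$ and $\beta$ that is simultaneously suited to both intersection products: decreasing in the sense required by Proposition-Definition~\ref{prop:exists-key}, and converging in the $\normsigma{\cdot}$-norm required by Theorem~\ref{thm_intersection_nums}. Since the two intersection products agree on Cartier classes, this will force the two limits to coincide.

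First, I would reduce to the case where $(\alpha^d) > 0$ and $(\beta^d) > 0$. Using the bilinearity of both products and the fact that they coincide on Cartier $b$-classes, adding a multiple of a fixed Cartier ample class $\om$ to $\alpha$ and to $\beta$ reduces the comparison to cross-terms where at least one factor is Cartier (handled trivially) and one term of the form $(\alpha + C\om) \cdot (\beta + C\om)$ where both factors are big. Once in this regime, Theorem~\ref{thm:approx-nef} combined with Diskant's inequalities, exactly as exploited in Corollary~\ref{cor_best_approx}, produces decreasing sequences $\alpha_n \downarrow \alpha$ and $\beta_n \downarrow \beta$ in $\cNef(\X)$ satisfying quantitative bounds of the form $(1 - 1/n)\,\alpha_n \le \alpha \le \alpha_n$ and similarly for $\beta$.

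Next, the quantitative bounds above feed directly into the computation from the proof of Corollary~\ref{cor:nef_in_nums}: the same telescoping estimate using Corollary~\ref{cor:CLN} gives $\normsigma{\alpha_n - \alpha} \to 0$ and $\normsigma{\beta_n - \beta} \to 0$. By the continuity of the bilinear extension in Theorem~\ref{thm_intersection_nums}, this implies
\[
\alpha_n \cdot \beta_n \;\longrightarrow\; \imath(\alpha) \cdot \imath(\beta) \qquad \text{in } \numbpfs{2}(\X).
\]
On the other hand, since $(\alpha_n, \beta_n)$ is decreasing, Theorem~\ref{thm:intersection}(3) applied to the pair yields
\[
\alpha_n \cdot \beta_n \;\longrightarrow\; \alpha \cdot \beta \qquad \text{in } \num^2_\om(\X),
\]
where the product on the right is the one from Proposition-Definition~\ref{prop:exists-key}. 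Here $\alpha_n \cdot \beta_n$ on the left-hand sides of both displayed convergences refers to the ordinary intersection of Cartier classes, which is unambiguous.

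The conclusion now follows by comparing the two limits in a common ambient space. The continuous inclusions $\num^2_\om(\X) \hookrightarrow \numbpfs{2}(\X) \hookrightarrow \wnum^2(\X)$ from \eqref{eq_known_inclusions} and \eqref{eq_known_inclusions2} propagate both convergences to the weak topology on $\wnum^2(\X)$, where uniqueness of limits forces $\alpha \cdot \beta = \imath(\alpha) \cdot \imath(\beta)$. The equality then holds in $\numbpfs{2}(\X)$ by injectivity. The main obstacle is ensuring that a single Cartier approximating sequence can be chosen with the two properties at once; without the Diskant-strengthened approximation of Theorem~\ref{thm:approx-nef}, a decreasing approximation alone would not control the $\normsigma{\cdot}$-norm, and vice versa. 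This is exactly why the Diskant inequalities enter as a black box at the heart of the argument.
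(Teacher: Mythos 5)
Your argument is correct, and it reaches the conclusion by a route that is organized differently from the paper's. The paper takes two sequences $\alpha_n,\beta_m\in\cNef(\X)$ converging only in the $\normsigma{\cdot}$-norm (from Corollary~\ref{cor:nef_in_nums}), writes $\imath(\alpha)\cdot\imath(\beta)=\lim_n\lim_m\alpha_n\cdot\beta_m$ by continuity of the pairing on $\nums(\X)$, and then identifies the iterated limit with the product of Proposition-Definition~\ref{prop:exists-key} by invoking the weak continuity of $\gamma\mapsto\alpha\cdot\gamma$ on $\BPF^1(\X)$ for a fixed nef factor (Corollary~\ref{cor:weak-limits}), one variable at a time. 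You instead build a single diagonal sequence that is simultaneously decreasing and $\normsigma{\cdot}$-Cauchy, apply the continuity statements of Theorem~\ref{thm_intersection_nums} and Theorem~\ref{thm:intersection}(3) along that one sequence, and conclude by uniqueness of weak limits in $\wnum^2(\X)$. Both arguments ultimately rest on the same Diskant-based quantitative approximation $(1-\epsilon)\alpha'\le\alpha\le\alpha'$; the paper consumes it indirectly through Corollary~\ref{cor:weak-limits}, you consume it directly. Your version is slightly more self-contained (it bypasses Corollary~\ref{cor:weak-limits}) at the cost of the preliminary reduction to big classes, which the paper avoids entirely.

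One point to tighten: in the reduction step you dismiss the cross-terms $\alpha\cdot\om$ and $\om\cdot\beta$ as "handled trivially" because one factor is Cartier. They are not quite trivial, since for a non-big nef $\alpha$ the two products are a priori computed along \emph{different} approximating sequences (a decreasing one versus a $\normsigma{\cdot}$-convergent one), which is exactly the difficulty you are trying to circumvent. The fix is immediate with your own machinery: by bilinearity $[P-Q](\alpha,\om)=[P-Q](\alpha+C\om,\om)$ since both products agree on pairs of Cartier classes, and the pair $(\alpha+C\om,\om)$ consists of a big nef class and a Cartier class, to which your diagonal argument applies with the constant sequence $\om$ in the second slot. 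With that sentence added, the proof is complete.
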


\begin{proof} 
Take two sequences of nef Cartier $b$-divisor classes such that $\normsigma{\alpha_n-\alpha}\to0$ and $\normsigma{\beta_n-\beta}\to0$.
By continuity of the intersection product in $\nums(\X)$, we have $\imath(\alpha)\cdot\imath(\beta) = \lim_n \lim_m \alpha_n\cdot\beta_m$.
By Corollary~\ref{cor:weak-limits}, we get 
$ \lim_m \alpha_n\cdot\beta_m = \alpha_n\cdot \beta$; and then  $ \lim_n \alpha_n\cdot\beta = \alpha\cdot \beta$ which proves the result.
\end{proof}

\subsection{Variations on the Hodge index theorem}

\hodgeindex*
\begin{proof}
In view of Corollary~\ref{cor:compat-nums}, the statement is a direct consequence of the version of Hodge index theorem holding in $\nums(\X)$, see~\cite[Theorem~3.13]{spectral}.
\end{proof}

\begin{prop}\label{prop:KT}
For any two nef classes $\alpha, \beta$, the sequence $e_k:= (\alpha^k\cdot \beta^{d-k})$ is log-concave. 
Moreover if $\alpha$ or $\beta$ are nef and big, and the sequence $(\log e_k)_k$ is affine then 
$\alpha$ and $\beta$ are proportional.
\end{prop}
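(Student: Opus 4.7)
My plan for log-concavity is to reduce to the classical Khovanskii--Teissier inequality by approximation. Using Theorem~\ref{thm:approx-nef}, I would choose decreasing sequences $\alpha_n\downarrow\alpha$ and $\beta_n\downarrow\beta$ of nef Cartier $b$-divisor classes, and by passing to a common model one may assume that $\alpha_n$ and $\beta_n$ are simultaneously determined on a smooth model $X_n$. The classical Khovanskii--Teissier inequality applied to the nef classes $(\alpha_n)_{X_n}$ and $(\beta_n)_{X_n}$ in $\nef(X_n)$ gives
\[
(\alpha_n^k\cdot\beta_n^{d-k})^2\ge(\alpha_n^{k-1}\cdot\beta_n^{d-k+1})(\alpha_n^{k+1}\cdot\beta_n^{d-k-1})
\]
for every $1\le k\le d-1$. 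Passing to the limit using the continuity of the intersection product along decreasing sequences (Theorem~\ref{thm:intersection}(3)) yields $e_k^2\ge e_{k-1}e_{k+1}$.

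For the rigidity statement, assume $\beta$ is big (the other case being symmetric), so that $(\beta^d)>0$, and that $(\log e_k)_k$ is affine, i.e.\ $e_k=e_0\,r^k$ with $r:=e_1/e_0$. Set $\eta:=\alpha-r\beta\in\Vect(\Nef(\X))$, which by Corollary~\ref{cor:nef_in_nums} lies in $\nums(\X)$. A direct calculation yields
\[
(\eta\cdot\beta^{d-1})=e_1-re_0=0\quad\text{and}\quad (\eta^2\cdot\beta^{d-2})=e_2-2re_1+r^2e_0=\frac{e_0e_2-e_1^2}{e_0}=0,
\]
and by Corollary~\ref{cor:compat-nums} these identities can be read equivalently in $\BPF^2(\X)$ or through the pairing $\nums(\X)\times\nums(\X)\to\numbpfs{2}(\X)$ from Theorem~\ref{thm_intersection_nums}. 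The Hodge index theorem on $\nums(\X)$ (namely \cite[Theorem~3.13]{spectral}, used to prove Theorem~\ref{thm:Hodge-index}) then forces $\eta$ to be a real multiple of $\beta$, say $\eta=c\beta$; combining this with $(\eta\cdot\beta^{d-1})=c(\beta^d)=0$ and $(\beta^d)>0$ gives $c=0$, so $\eta=0$ and $\alpha=r\beta$.

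The main obstacle is the rigidity statement: equality in Khovanskii--Teissier is not preserved by the decreasing approximants produced by Theorem~\ref{thm:approx-nef}, so a direct limiting argument from the Cartier case cannot conclude. The essential technical input is the Hodge-type bilinear pairing available on the Banach space $\nums(\X)$, combined with the compatibility of this pairing with the BPF-intersection of Proposition-Definition~\ref{prop:exists-key} provided by Corollary~\ref{cor:compat-nums}.
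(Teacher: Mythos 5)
Your log-concavity argument is exactly the paper's: approximate by decreasing nef Cartier classes, apply the classical Khovanskii--Teissier inequality on a common model, and pass to the limit via Theorem~\ref{thm:intersection}(3). That part is fine.

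The rigidity argument, however, has a genuine gap at its final step. From $(\eta\cdot\beta^{d-1})=0$ and $(\eta^2\cdot\beta^{d-2})=0$ you cannot conclude that $\eta$ is a multiple of $\beta$. Theorem~\ref{thm:Hodge-index} (and the underlying result in $\nums(\X)$ that you cite) requires the \emph{full} product $\eta\cdot\beta$ to vanish in $\numbpfs{2}(\X)$, i.e.\ $(\eta\cdot\beta\cdot\gamma)=0$ for every $\gamma\in\cBPF^{d-2}(\X)$; you only have the pairing against the single class $\gamma=\beta^{d-2}$. What the single-class vanishing gives you, via Proposition~\ref{prop:Hodge-Riemann}, is only that $\eta$ lies in the kernel of the semi-negative form $q_{\beta^{d-2}}$ restricted to $H_\beta$ --- and that kernel is in general much larger than $\R\beta$. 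Concretely, for $d\ge3$ let $\pi\colon X'\to X$ blow up a point, let $E$ be the exceptional divisor and $\beta=[\pi^*A]$ with $A$ ample on $X$; then $\eta=[E]$ satisfies $(\eta\cdot\beta^{d-1})=(\pi_*E\cdot A^{d-1})=0$ and $(\eta^2\cdot\beta^{d-2})=(\pi_*(E^2)\cdot A^{d-2})=0$ because $\pi_*(E^2)=0$, yet $E$ is not proportional to $\pi^*A$. So the implication you invoke is false as stated, and the extra hypothesis that $\eta=\alpha-r\beta$ is a difference of nef classes is nowhere used to rule out such degenerate directions. (This is precisely why the proof of Theorem~\ref{thm:DS} in the paper has to vary the complementary class $\beta\in\cBPF^{d-k-2}(\X)$ and prove the proportionality constant is independent of it.)

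The paper closes the rigidity step differently: assuming both classes big (the case where only one is big reduces to this via Siu's inequality), it normalizes $(\alpha^d)=(\beta^d)=1$, so that affineness of $(\log e_k)$ forces $(\alpha^{d-1}\cdot\beta)=1$, and then applies the Diskant inequality~\eqref{eq:Diskant} for nef $b$-divisors established in \S\ref{sec:Diskant}. This yields $0\ge(1-s)^d$ for the largest $s$ with $\alpha-s\beta\ge0$, hence $\alpha\ge\beta$, and by symmetry $\beta\ge\alpha$, so $\alpha=\beta$ since the psef cone is salient. If you want to keep your variational setup, you would need to replace the appeal to the Hodge index theorem by Diskant's inequality (or some equivalent quantitative equality-case statement); as written, the argument does not go through.
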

\begin{proof}
Write $\alpha$ and $\beta$ as decreasing limits of sequences of nef Cartier classes
$\alpha_n\downarrow\alpha$ and $\beta_n\downarrow\beta$. By Khovanskii-Teissier's inequalities, the sequence
$(\alpha_n^k\cdot \beta_n^{d-k})$ is log-concave which implies by Theorem \ref{thm:intersection} the log-concavity of the sequence $e_k=(\alpha^k\cdot \beta^{d-k})$
by letting $n\to\infty$.

Suppose now that $\alpha$ and $\beta$ are big and nef, and that the sequence $e_k$ is linear. 
Scale both classes such that $(\alpha^d)=(\beta^d) =1$ so that $(\alpha^{d-1}\cdot\beta)=1$ too.
Then $\alpha =\beta$ by Diskant's inequalities.
\end{proof}

Recall that given any two classes $\theta, \theta'\in\nums(\X)$
the pairing $\alpha\cdot\beta$ belongs to $\numbpfs{2}(\X)$ which is included in $ \numbpfs{d-2}(\X)^*$ by~\cite[Proposition~2.10]{spectral}.
It follows that for any class $\gamma\in\BPF^{d-2}(\X)$, 
 the pairing $\theta\cdot \theta'\cdot \gamma \in \R$ is well-defined.

\begin{prop}\label{prop:Hodge-Riemann}
Let $\gamma\in\BPF^{d-2}(\X)$, and $\alpha\in\Nef(\X)$. Suppose that $\alpha \cdot \gamma \neq 0$, and that 
there exists a sequence $\gamma_n\in \cBPF^{d-2}(\X)$ decreasing to $\gamma$.

Then the quadratic form 
\[q_\gamma(\theta):= \left(\theta^2 \cdot\gamma\right)\]
 is semi-negative on the
hyperplane $H_\alpha:= \{\theta \in \nums(\X), (\theta \cdot \alpha\cdot\gamma)=0\}$.
In particular $\ker(q_\gamma|_{H_\alpha}) = \{\theta\in H_\alpha,\, q_\gamma(\theta)=0\}$.
\end{prop}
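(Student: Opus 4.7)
The plan is to deduce the statement from the Khovanskii--Teissier style Cauchy--Schwarz inequality
\[
(\theta \cdot \alpha \cdot \gamma)^2 \;\ge\; (\theta^2 \cdot \gamma)\,(\alpha^2 \cdot \gamma)
\qquad (\ast)
\]
valid for every $\theta \in \nums(\X)$, $\alpha \in \Nef(\X)$ and $\gamma$ as in the statement. Granting~$(\ast)$, the semi-negativity of $q_\gamma$ on $H_\alpha$ follows by applying~$(\ast)$ with $\alpha$ replaced by $\alpha + t\om$ for small $t>0$: since $(\theta \cdot \alpha \cdot \gamma) = 0$ the inequality reads
\[
t^2 (\theta \cdot \om \cdot \gamma)^2 \;\ge\; q_\gamma(\theta)\bigl((\alpha^2 \cdot \gamma) + 2t(\alpha \cdot \om \cdot \gamma) + t^2 (\om^2 \cdot \gamma)\bigr),
\]
and since $(\alpha \cdot \om \cdot \gamma) > 0$ (because $\alpha \cdot \gamma \in \BPF^{d-1}(\X)$ is nonzero and $\om$ is big and nef), comparing orders in $t$ as $t \to 0^+$ forces $q_\gamma(\theta) \le 0$ regardless of the sign of $(\alpha^2 \cdot \gamma)$. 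The \emph{in particular} statement is then a formal consequence of semi-negativity: for $\theta \in H_\alpha$ with $q_\gamma(\theta) = 0$ and any $\theta' \in H_\alpha$, the non-positive polynomial $t \mapsto q_\gamma(\theta + t\theta')$ must have vanishing linear term, and polarization gives $q_\gamma(\theta, \theta') = 0$.

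To establish~$(\ast)$, I would approximate all three inputs by Cartier classes: choose $\gamma_n \in \cBPF^{d-2}(\X)$ with $\gamma_n \downarrow \gamma$ by hypothesis, $\alpha_m \in \cNef(\X)$ with $\alpha_m \downarrow \alpha$ by Theorem~\ref{thm:approx-nef}, and $\theta_k \in \cnum^1(\X)$ with $\normsigma{\theta_k - \theta} \to 0$ by density of Cartier classes in $\nums(\X)$. For each fixed triple $(k, m, n)$ all three classes are Cartier and determined on a common model; writing $\gamma_n$ as a positive combination of pushforwards $\pi_\ast(\beta_1 \cdots \beta_{d+e-2})$ of complete intersections of ample classes under flat morphisms of relative dimension $e$, the projection formula reduces the Cartier inequality
\[
(\theta_k \cdot \alpha_m \cdot \gamma_n)^2 \;\ge\; (\theta_k^2 \cdot \gamma_n)(\alpha_m^2 \cdot \gamma_n)
\]
to an intersection computation on the smooth surface obtained by cutting the source of $\pi$ by the divisors $\beta_i$, where it is the classical Hodge index theorem for surfaces.

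The main obstacle is passing all three limits simultaneously while preserving each factor. The limit $k \to \infty$ is controlled by the continuity of the intersection pairing $\nums(\X) \times \nums(\X) \to \numbpfs{2}(\X)$ (Theorem~\ref{thm_intersection_nums}) dual-paired against the fixed Cartier BPF class $\gamma_n$. The limit $m \to \infty$ is the monotone convergence of Theorem~\ref{thm:intersection}(3), which produces convergence in $\num_\om$-norm; the resulting classes pair continuously with $\theta \in \nums(\X) \hookrightarrow \numbpfs{1}(\X)$ thanks to Proposition~\ref{prop:BDPP} and the inclusions~\eqref{eq_known_inclusions}. The limit $n \to \infty$ is handled by Corollary~\ref{cor:weak-limits} for the nef-product factors $(\alpha^i \cdot \gamma_n)$, $i = 1, 2$, and for $(\theta^2 \cdot \gamma_n)$ by a further Cartier approximation of $\theta$, reducing this pairing to one between a fixed Cartier codimension~$2$ class and the weakly decreasing net $\gamma_n$, whose convergence is then immediate. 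Chaining these three limits produces~$(\ast)$ in full generality.
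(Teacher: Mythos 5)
Your proposal is correct in substance, and it reorganizes the argument in a genuinely different (and arguably cleaner) way than the paper. The paper does not pass through the Khovanskii--Teissier inequality $(\ast)$: instead it fixes Cartier approximants $\theta^{(j)}\to\theta$ in $\nums(\X)$, $\alpha_n\downarrow\alpha$, $\gamma_n\downarrow\gamma$, represents $\gamma_n$ by a surface $\Sigma$ in a model, and \emph{perturbs $\theta^{(j)}$} to $\tilde\theta_j=\theta^{(j)}-\eta_j\om$ with $\eta_j=O(\epsilon)$ so that $\tilde\theta_j|_\Sigma$ lies exactly in the orthogonal of the nef class $\alpha_n|_\Sigma$; the surface Hodge index theorem then gives $(\tilde\theta_j^2\cdot\gamma_n)\le0$ directly, and one lets $j\to\infty$, $n\to\infty$, $\epsilon\to0$. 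You instead \emph{perturb $\alpha$} to $\alpha+t\om$ and extract semi-negativity from the order-$t$ term of $(\ast)$; both perturbations serve the same purpose, namely handling the degenerate case $(\alpha^2\cdot\gamma)=0$ with $\alpha\cdot\gamma\neq0$, and both ultimately rest on the same surface-level Hodge index statement transported through the same triple Cartier approximation. What your route buys is a reusable intermediate inequality $(\ast)$ and a deduction step that is purely formal; what the paper's route buys is that it never needs to assert the convergence of the mixed term $(\theta\cdot\alpha_m\cdot\gamma_n)$ as a separate lemma, only its smallness after the perturbation.

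Two points deserve care if you write this up. First, in your $m\to\infty$ limit you pair the non-psef class $\theta\in\nums(\X)$ against $\alpha_m\cdot\gamma_n\to\alpha\cdot\gamma_n$; convergence in $\normom{\cdot}$ alone does not suffice here, because $\normbpf{\cdot}$ is not controlled by $\normom{\cdot}$. The correct route is to observe that $\alpha_m\cdot\gamma_n$ is a \emph{decreasing} sequence of Cartier BPF curve classes, so that for $m'\ge m$ the difference $(\alpha_m-\alpha_{m'})\cdot\gamma_n$ is BPF with small $\normbpf{\cdot}$-norm, and then use the duality pairing with $\numbpfs{1}\supset\nums(\X)$ (or, as the paper does, run the $j$- and $n$-limits jointly and only ever estimate the mixed term up to $O(\epsilon)$). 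Second, your use of the inclusion $\nums(\X)\hookrightarrow\numbpfs{1}(\X)$ relies on Theorem~\ref{thm:nums-bpfs}, which appears later in the paper; there is no circularity since that theorem depends only on Theorem~\ref{thm:MA} and not on Proposition~\ref{prop:Hodge-Riemann}, but the dependence should be flagged. Your verification that $(\alpha\cdot\om\cdot\gamma)>0$ via Siu's inequality, and your polarization argument for the \emph{in particular} clause, are both correct and match the paper's (terser) remarks.
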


\begin{rem}
We believe the assumption on the existence of a sequence of Cartier BPF classes approximating $\gamma$ to be superfluous. Note that it
is satisfied when $\gamma$ is a positive linear combination of intersections of nef classes or when the codimension of $\gamma$ is $1$ by Theorem \ref{thm:approx-nef}.
\end{rem}

\begin{proof}
Observe that the last statement is a consequence of Cauchy-Schwarz inequality.
Let us prove that $q$ is semi-negative on $H_\alpha$.

Pick any decreasing sequence $\alpha_n\in\cNef(\X)$ converging to $\alpha$.
Pick any sequence of models $X_{n+1}\ge X_n$ such that 
for each $n$, the classes $\gamma_n$ and $\alpha_n$ are determined in $X_n$. By perturbing slightly $\gamma_n$, we may assume 
it is represented by a surface $\Sigma$ in $X_n$.
We may also assume that $\gamma_n \le \omega^{d-2}$ for some fixed ample class $\omega$ in $X$. Note that 
\[
0< c:=
\left(\omega\cdot\alpha  \cdot \gamma \right)
\le
\left(\omega\cdot\alpha_n  \cdot \gamma_n \right)
\le 
C:= \left(\omega\cdot\alpha_1  \cdot \gamma_1 \right)
~.\]
Pick any class $\theta\in \nums(\X)$ such that $(\theta \cdot \alpha\cdot\gamma)=0$.
Take any sequence of Cartier classes $\theta^{(j)}\to \theta$ in $\nums(\X)$. 
By~\cite[Theorem~3.16]{spectral}, we have $(\theta^{(j)} \cdot \beta)\to (\theta\cdot \beta)$ in $\numbpfs{2}(\X)$
for any class $\beta\in\Nef (\X)$. 

Fix $\epsilon>0$, and take $n$ large enough such that  $|(\theta \cdot \alpha_n\cdot \gamma_n)|\le \epsilon$.
For all $j\gg1$, we have $|(\theta^{(j)} \cdot \alpha_n\cdot \gamma_n)|\le 2\epsilon$. 
Set $\tilde{\theta}_j= \theta^{(j)} - \eta_j \omega$ so that $(\tilde{\theta}_j \cdot  \alpha_n\cdot \gamma_n)=0$, and observe that 
\[
0\le |\eta_j| := \frac{|(\theta^{(j)}\cdot  \alpha_n\cdot \gamma_n)|}{\left(\omega\cdot  \alpha_n\cdot \gamma_n\right)}
\le 2c^{-1} \epsilon~.
\]
By the Hodge index theorem, we get 
\[
\left(\tilde{\theta}^2_j\cdot \gamma_n \right) = \left(\tilde{\theta}^2_j|_\Sigma\right)
\le 0~.\]
Since the sequence $\eta_j$ is bounded, we may extract a subsequence converging to some $0<\eta\le 2c^{-1}\epsilon$
and letting $j\to\infty$, we obtain
\[
\left((\theta - \eta \omega)^2 \cdot\gamma \right)
\le 0\]
We conclude by letting first $n\to\infty$ and then $\epsilon\to0$.
\end{proof}

An analog of the Hodge index theorem was used by Dinh-Sibony in \cite{MR2066940} to study commutative subgroups of the group of automorphisms on compact K\"ahler manifolds. 
Following~\cite[Corollary~3.5]{MR2066940}, we extend their result to arbitrary $b$-divisor classes.
\begin{thm}\label{thm:DS}
Pick any class $0\neq \gamma\in\BPF^k(\X)$ such that 
there exists a sequence $\gamma_n\in \cBPF^{k}(\X)$ decreasing to $\gamma$.

If $\theta, \theta' \in\nums(\X)$ satisfy $(\theta^2\cdot\gamma)\ge0$,   $((\theta')^2 \cdot \gamma) \ge0$, and $(\theta \cdot \theta' \cdot \gamma) =0$, 
then there exist $(a, a')\neq (0)$ such that $(a \theta+ a' \theta') \cdot \gamma =0$.
\end{thm}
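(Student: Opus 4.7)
The plan is to run a classical Hodge-index type argument, in the spirit of Dinh-Sibony, on the two-dimensional subspace $V := \R\theta + \R\theta' \subset \nums(\X)$, using Proposition~\ref{prop:Hodge-Riemann} as the substitute for the usual Hodge index theorem. I will treat in detail the case $k = d-2$, where every intersection produces a number; the general case $k \le d-2$ then reduces to this by intersecting with an auxiliary Cartier BPF class.

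Assume $k = d-2$. Fix a big nef Cartier class $\omega$ determined by an ample divisor on $X$; Siu's inequalities (Proposition~\ref{prop_Siu}) force $(\omega^2 \cdot \gamma) > 0$ since $\gamma \neq 0$ is BPF. Set $q(\xi,\eta) := (\xi \cdot \eta \cdot \gamma)$ and decompose $\theta = \lambda\omega + \theta_0$, $\theta' = \lambda'\omega + \theta'_0$ with $\theta_0, \theta'_0$ in the hyperplane $H_\omega := \{\xi \in \nums(\X) : q(\xi,\omega)=0\}$. When $\lambda = 0$ (and symmetrically for $\lambda' = 0$), the hypothesis $q(\theta) \ge 0$ together with the seminegativity from Proposition~\ref{prop:Hodge-Riemann} forces $q(\theta) = 0$, and the kernel characterization in that proposition gives $q(\theta,\eta) = 0$ for all $\eta \in H_\omega$ and hence (via $\eta = s\omega + \eta_0$) for all $\eta \in \nums(\X)$. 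Since $\cNef(\X) \subset \nums(\X)$ by Corollary~\ref{cor:nef_in_nums}, this yields $\theta \cdot \gamma = 0$ in $\numbpfs{d-1}(\X)$, and $(a,a') = (1,0)$ works.

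In the main case $\lambda, \lambda' \neq 0$, the identity $q(\theta,\theta') = 0$ rewrites as $q(\theta_0,\theta'_0) = -\lambda\lambda' q(\omega)$, while $q(\theta), q(\theta') \ge 0$ combined with seminegativity yield $|q(\theta_0)| \le \lambda^2 q(\omega)$ and $|q(\theta'_0)| \le (\lambda')^2 q(\omega)$. Cauchy-Schwarz on the seminegative form $q|_{H_\omega}$ then gives
\[
q(\theta_0,\theta'_0)^2 \le q(\theta_0)\,q(\theta'_0) \le \lambda^2(\lambda')^2 q(\omega)^2,
\]
and since the outer terms coincide, equality holds throughout. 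Setting $t := \lambda'/\lambda = q(\theta_0,\theta'_0)/q(\theta_0)$, equality in Cauchy-Schwarz together with Proposition~\ref{prop:Hodge-Riemann} places $\theta'_0 - t\theta_0$ in $\ker(q|_{H_\omega})$. A direct computation shows $\theta'_0 - t\theta_0 = \theta' - t\theta$, so $q(\theta'-t\theta, \eta) = 0$ for every $\eta \in \nums(\X)$, hence for every $\eta \in \cNef(\X)$, which yields $(\theta' - t\theta)\cdot\gamma = 0$ in $\numbpfs{d-1}(\X)$; take $(a,a') = (-t, 1)$.

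For the general case $k < d-2$, one should apply the preceding $k = d-2$ argument to the decreasing sequence $\gamma_n \cdot \beta$ approximating $\gamma \cdot \beta \in \BPF^{d-2}(\X)$, for each $\beta \in \cBPF^{d-k-2}(\X)$. The hardest part, I expect, will be establishing that the pair $(a,a')$ coming out of the analysis can be chosen independently of $\beta$, so as to produce a single relation $(a\theta + a'\theta')\cdot\gamma = 0$ in $\numbpfs{k+1}(\X)$; this should follow from a rigidity argument exploiting the explicit formula $t_\beta = (\theta'\cdot\omega\cdot\gamma\cdot\beta)/(\theta\cdot\omega\cdot\gamma\cdot\beta)$ and the fact that $\theta\cdot\theta'\cdot\gamma$ vanishes as a class in $\numbpfs{k+2}(\X)$.
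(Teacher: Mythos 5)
Your core argument in the case $k=d-2$ is correct and takes a genuinely different route from the paper's: where the paper perturbs $\theta$ and $\theta'$ by small multiples of $\om$ and $\om'$ (Lemma~\ref{lem:rn}) so as to invoke the classical Hodge index theorem on the surfaces $\Sigma_n$, you work directly with the semi-negative form $q_\gamma|_{H_\om}$ furnished by Proposition~\ref{prop:Hodge-Riemann}, decompose $\theta,\theta'$ orthogonally against $\om$, and extract the relation from the equality case of Cauchy--Schwarz. The steps check out: $(\om^2\cdot\gamma)>0$ by Siu's inequality, the chain $q(\theta_0,\theta'_0)^2=\lambda^2(\lambda')^2q(\om)^2\le q(\theta_0)\,q(\theta'_0)\le\lambda^2(\lambda')^2 q(\om)^2$ forces equality throughout, and the kernel characterization in Proposition~\ref{prop:Hodge-Riemann} then places $\theta'-t\theta=\theta'_0-t\theta_0$ in the radical of $q_\gamma$ on all of $\nums(\X)$, whence $(\theta'-t\theta)\cdot\gamma=0$ by the perfect pairing against Cartier divisor classes. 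This is arguably cleaner than the published proof of this step.

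The genuine gap is in the reduction of $k<d-2$ to $k=d-2$, exactly where you flag it. Running your argument on $\gamma\cdot\beta$ produces, for each $\beta\in\cBPF^{d-k-2}(\X)$ with $(\theta\cdot\om\cdot\gamma\cdot\beta)\neq0$, a constant $t_\beta=(\theta'\cdot\om\cdot\gamma\cdot\beta)/(\theta\cdot\om\cdot\gamma\cdot\beta)$ with $(\theta'-t_\beta\theta)\cdot\gamma\cdot\beta=0$; but the independence of $t_\beta$ from $\beta$ does not follow from the displayed formula (that independence is equivalent to the proportionality of the classes $\theta\cdot\om\cdot\gamma$ and $\theta'\cdot\om\cdot\gamma$, essentially the conclusion you are after), nor from the vanishing of $\theta\cdot\theta'\cdot\gamma$: pairing the relation with $\theta$ only returns $0=t_\beta\cdot(\theta^2\cdot\gamma\cdot\beta)$, which is vacuous since the equality case already gives $(\theta^2\cdot\gamma\cdot\beta)=0$. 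The paper closes this point by taking $\beta=\om_1\cdots\om_{d-k-2}$ and $\beta'=\om'_1\om_2\cdots\om_{d-k-2}$, exploiting $\beta\cdot\om'_1=\beta'\cdot\om_1$ to deduce $t_\beta(\theta\cdot\gamma\cdot\beta\cdot\om'_1)=t_{\beta'}(\theta\cdot\gamma\cdot\beta\cdot\om'_1)$, and then using that flat push-forwards of complete intersections of ample classes generate $\cNef(\X)$; one must also take care of the degenerate $\beta$ for which $\theta\cdot\gamma\cdot\beta$ or $\theta'\cdot\gamma\cdot\beta$ vanishes. Some such argument must be supplied for your proof to be complete.
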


\begin{rem}
Note that the conditions $(\theta^2\cdot\gamma)\ge0$,   $((\theta')^2 \cdot \gamma) \ge0$ are automatically satisfied when $\theta, \theta'\in \Nef(\X)$.
\end{rem}

\begin{proof}
We may suppose that $\theta\cdot \gamma \neq 0$, $\theta'\cdot \gamma \neq 0$.

We claim that
\[\theta^2\cdot \gamma =
(\theta')^2\cdot \gamma = 0~.\]
To see this pick any class $\beta\in\cBPF^{d-k-2}(\X)$
such that the curve classes $\theta\cdot \gamma \cdot \beta $ and $\theta'\cdot \gamma \cdot \beta $ are both non zero.
Observe that  
\[q_\gamma(a \theta+ a' \theta'):= \left((a \theta+ a' \theta')^2 \cdot \gamma\cdot \beta\right) =a^2 \left(\theta^2\cdot \gamma\cdot\beta\right) + (a')^2 
\left((\theta')^2\cdot \gamma\cdot \beta\right) \ge 0~.\] 
Pick any big and nef class $\alpha\in\cNef(\X)$ such that $(\theta\cdot \gamma \cdot \beta\cdot \alpha) \neq0$, and $(\theta'\cdot \gamma \cdot \beta \cdot \alpha) \neq0$.
By Proposition~\ref{prop:Hodge-Riemann}, we have $q_\gamma\le 0$ on the hyperplane $H:= (\gamma\cdot\beta\cdot\alpha)^{\perp}$. 
One can thus find a non-zero element  $a\theta + a'\theta' \in H$ with $a, a'\neq 0$ such that $q_\gamma(a\theta + a'\theta')=0$, and it follows
that
$\left(\theta^2\cdot \gamma\cdot\beta\right) =
\left((\theta')^2\cdot \gamma\cdot \beta\right) = 0$.
Since $\beta$ is arbitrary, the claim is proved.

\smallskip

Pick any class $\beta\in\cBPF^{d-k-2}(\X)$ such that both curve classes $\theta\cdot \Omega$ and $\theta'\cdot \Omega$ are non zero with $\Omega=\beta\cdot\gamma$.
We shall prove that $\theta\cdot \gamma\cdot\beta = c(\beta)\, \theta'\cdot \gamma\cdot\beta$ for some $c(\beta)>0$.
By adding a small multiple of $\om^k$ to $\gamma_n$ and $\om^{d-k-2}$ to $\beta$, we may assume that  $\Omega_n:= \gamma_n\cdot\beta$
is strongly BPF, and  determined in some model by a surface $\Sigma_n$ which is the image under a flat map of a complete intersection ample divisor.
We may also inforce that $\theta\cdot\Omega\cdot\om \neq0$.

Choose any $t_n$ such that $\left(\theta_n^2\cdot \Omega_n\right) =0$ with $\theta_n = \theta + t_n \om$.
Observe that
\begin{align*}
0
=
\left(\theta_n^2\cdot \Omega_n\right) 
&= 
\left(\theta^2\cdot \Omega_n\right) 
+ 2 t_n \left(\om\cdot \theta\cdot \Omega_n\right) 
+t_n^2 \left(\om^2\cdot \Omega_n\right)~.
\\
&=\left(\theta^2\cdot (\Omega_n-\Omega)\right) 
+ 2 t_n \left(\om\cdot \theta\cdot \Omega_n\right) 
+t_n^2 \left(\om^2\cdot \Omega_n\right)~.
\end{align*}
Since $\left(\om^2\cdot \Omega_n\right)$ is uniformly bounded from below by $(\om^2 \cdot \gamma \cdot \beta )$, and $\left(\theta^2\cdot \Omega_n\right)\sim \left(\theta^2\cdot \Omega\right) =0$, 
such a constant $t_n$ exists and we may choose $t_n\to0$ as $n\to\infty$. 

Now pick another big and nef class $\om'$ such that 
$\om \cdot \Omega$ and $\om' \cdot \Omega$ are not proportional, normalized by 
$(\om \cdot \Omega \cdot\theta)= (\om' \cdot \Omega \cdot\theta)\neq0$.
Choose $r_n, r'_n$ such that
\begin{align}
0
&=\left(\theta_n'\cdot \theta_n\cdot \Omega_n \right) 
 \label{eq:first}
\\
0&=((\theta_n')^2\cdot \Omega_n)
 \label{eq:second}
\end{align}
with $\theta'_n = \theta' + r_n \om+ r'_n \om'$.

\begin{lem}\label{lem:rn}
One can find constants $r_n , r_n'$ satisfying~\eqref{eq:first} and~\eqref{eq:second} 
converging to zero as $n\to\infty$.
\end{lem}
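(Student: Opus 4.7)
The plan is to apply the implicit function theorem to the smooth map $\Phi_n:\R^2\to\R^2$ defined by
\[
\Phi_n(r,r'):=\Bigl((\theta_n\cdot(\theta'+r\om+r'\om')\cdot\Omega_n),\ ((\theta'+r\om+r'\om')^2\cdot\Omega_n)\Bigr).
\]
A zero $(r_n,r_n')$ of $\Phi_n$ is exactly a solution of the system~\eqref{eq:first}--\eqref{eq:second}.

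First I would note that $\Phi_n$ is polynomial of total degree two in $(r,r')$, and each of its coefficients is an intersection number of a product of the fixed Cartier $b$-classes $\theta,\theta',\om,\om'$ (plus an $O(t_n)$ correction arising from the summand $t_n\om$ inside $\theta_n$) against $\Omega_n=\gamma_n\cdot\beta$. Since $\gamma_n\downarrow\gamma$ in the weak topology and $t_n\to 0$, weak continuity of intersection against fixed Cartier classes forces every coefficient to converge, and hence $\Phi_n\to\Phi$ in $C^\infty$ uniformly on bounded subsets of $\R^2$, where
\[
\Phi(r,r'):=\Bigl((\theta\cdot(\theta'+r\om+r'\om')\cdot\Omega),\ ((\theta'+r\om+r'\om')^2\cdot\Omega)\Bigr).
\]
Moreover $\Phi(0,0)=0$: the first coordinate equals $(\theta\cdot\theta'\cdot\Omega)=(\theta\cdot\theta'\cdot\gamma\cdot\beta)=0$ by the standing hypothesis $(\theta\cdot\theta'\cdot\gamma)=0$ of Theorem~\ref{thm:DS}, and the second equals $((\theta')^2\cdot\Omega)=0$ by the claim $((\theta')^2\cdot\gamma)=0$ established at the start of the proof.

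The central step is to check that the Jacobian $J=D\Phi(0,0)$ is invertible. A direct computation yields
\[
J=\begin{pmatrix} (\theta\cdot\om\cdot\Omega) & (\theta\cdot\om'\cdot\Omega) \\ 2(\theta'\cdot\om\cdot\Omega) & 2(\theta'\cdot\om'\cdot\Omega) \end{pmatrix},
\]
whose determinant equals $2a\,(\theta'\cdot(\om'-\om)\cdot\Omega)$ once one uses the normalization $(\theta\cdot\om\cdot\Omega)=(\theta\cdot\om'\cdot\Omega)=a\neq 0$. The hard part is verifying that this quantity is non-zero. The curve class $(\om'-\om)\cdot\Omega$ is non-zero by the non-proportionality hypothesis on $\om\cdot\Omega$ and $\om'\cdot\Omega$, and one has the freedom to refine the choice of $\om'$---keeping it big and nef, keeping $\om\cdot\Omega,\om'\cdot\Omega$ non-proportional, and keeping the normalization $(\theta\cdot(\om'-\om)\cdot\Omega)=0$---so as to further ensure $(\theta'\cdot(\om'-\om)\cdot\Omega)\neq 0$. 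This last refinement can be arranged whenever $\theta\cdot\Omega$ and $\theta'\cdot\Omega$ are linearly independent as curve classes; in the opposite degenerate case the conclusion of Theorem~\ref{thm:DS} holds directly (taking $(a,a')$ corresponding to the proportionality), so one may assume the non-degenerate case.

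Once $J$ is invertible, the quantitative inverse function theorem, applied to the $C^1$-small perturbation $\Phi_n$ of $\Phi$, provides a neighborhood $U$ of the origin such that for every $n$ large enough the equation $\Phi_n(r,r')=0$ admits a unique solution $(r_n,r_n')\in U$. Continuity of this implicit solution in the parameter $n$, combined with $\Phi(0,0)=0$, forces $(r_n,r_n')\to(0,0)$ as $n\to\infty$, which is exactly the content of the lemma.
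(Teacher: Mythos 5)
Your route is genuinely different from the paper's. The paper first uses the linear equation~\eqref{eq:first} to eliminate $r'$, writing $r'=\epsilon_n-\kappa_n r$ with $\epsilon_n\to0$ and $\kappa_n\to1$, substitutes into~\eqref{eq:second}, and is left with a \emph{single} quadratic in $r$ whose constant term tends to $((\theta')^2\cdot\Omega)=0$ and whose leading coefficient tends to $((\om-\om')^2\cdot\Omega)$, bounded away from $0$ by the Hodge index theorem on the surface representing $\Omega$; since the product of the roots is the ratio of the constant term to the leading term, a real root $r_n\to0$ can be selected and $r'_n\to0$ follows. The only non-degeneracy this uses is $((\om-\om')^2\cdot\Omega)\neq0$. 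Your inverse-function-theorem argument instead requires the full $2\times2$ Jacobian to be invertible, i.e. $(\theta'\cdot(\om'-\om)\cdot\Omega)\neq0$, which is a genuinely different condition and one that the ambient proof of Theorem~\ref{thm:DS} does \emph{not} impose on $\om'$ (only non-proportionality of $\om\cdot\Omega$ and $\om'\cdot\Omega$, plus the normalization $(\theta\cdot\om\cdot\Omega)=(\theta\cdot\om'\cdot\Omega)\neq0$, are required there). You rightly flag this as the hard point, but your fix --- going back and re-choosing $\om'$ more generically --- means you are not proving the lemma for the $\om'$ already fixed in the surrounding argument; you are proving a variant after modifying the setup. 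That is a legitimate way to complete Theorem~\ref{thm:DS}, since the extra condition is open and attainable when $\theta\cdot\Omega$ and $\theta'\cdot\Omega$ are independent, and the proportional case short-circuits the step; still, the attainability of the refined $\om'$ inside the big nef cone subject to the normalization should be argued rather than asserted, and in the degenerate case what you obtain directly is the intermediate proportionality $\theta\cdot\gamma\cdot\beta=c(\beta)\,\theta'\cdot\gamma\cdot\beta$ for that particular $\beta$, not yet the conclusion of the theorem.

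Two further small corrections. First, $\theta$ and $\theta'$ lie in $\nums(\X)$ and are \emph{not} Cartier, so the convergence of the coefficients of $\Phi_n$ cannot be justified by ``weak continuity of intersection against fixed Cartier classes''; it follows instead from the continuity of the pairing of $\numbpfs{2}(\X)$ against the decreasing net $\Omega_n=\gamma_n\cdot\beta$ (which is also what the paper uses implicitly when it writes $(\theta^2\cdot\Omega_n)\sim(\theta^2\cdot\Omega)$). Second, your appeal to a quantitative inverse function theorem is fine once the Jacobian is invertible, so the overall architecture is sound; the comparison to keep in mind is that the paper's elimination trick trades the invertibility of the Jacobian for the weaker requirement that one diagonal-type entry, namely $((\om-\om')^2\cdot\Omega)$, be nonzero, which is exactly what the Hodge index theorem supplies from the stated hypotheses.
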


From $\theta_n^2\cdot\Sigma_n = (\theta'_n)^2\cdot\Sigma_n =\theta_n\cdot\theta'_n\cdot\Sigma_n=0$
we infer that $\theta_n\cdot\Sigma_n$ and  $\theta'_n\cdot\Sigma_n$ are proportional by the usual Hodge index theorem.
Letting $n\to\infty$ proves that  $\theta\cdot \gamma\cdot\beta = c(\beta)\, \theta'\cdot \gamma\cdot\beta$ for some $c(\beta)>0$.

\smallskip

We claim that the proportionality constant $c(\beta)$ is in fact independent on $\beta$ so that 
$\theta\cdot \gamma = c\, \theta'\cdot \gamma$ for some $c>0$ as required. 

To prove our claim we suppose first that $\beta = \om_1\cdot \om_2\cdot\ldots \cdot \om_{d-k-2}$, and 
$\beta' = \om'_1\cdot \om_2\cdot \ldots \cdot \om_{d-k-2}$ for some big and nef classes
$\om'_1, \om_1, \ldots, \om_{d-k-2}$. Then 
\[
c(\beta)\, (\theta'\cdot \gamma\cdot\beta\cdot\om'_1)
=
(\theta\cdot \gamma\cdot\beta\cdot\om'_1)
=
(\theta\cdot \gamma\cdot\beta'\cdot\om_1)
=
c(\beta')\, (\theta'\cdot \gamma\cdot\beta\cdot\om'_1)
\]
hence $c(\beta) =c(\beta')$ in this case. Since flat push-forwards of complete intersection classes of ample divisors generate $\cNef(\X)$,
we get the result (compare with the proof of~\cite[Theorem~3.13]{spectral}).
\end{proof}

\begin{proof}[Proof of Lemma~\ref{lem:rn}]
Write $r=r_n$ and $r'=r'_n$. Note that~\eqref{eq:first} implies $r'= \epsilon_n - \kappa_n r$
with
\[
\epsilon_n = -\frac{(\theta'\cdot\theta_n\cdot\Omega_n)}{(\om'\cdot\theta_n\cdot\Omega_n)} \to 0
\]
and
\[
\kappa_n = \frac{(\om\cdot\theta_n\cdot\Omega_n)}{(\om'\cdot\theta_n\cdot\Omega_n)} \to 1.
\]
Replacing $r'$ into~\eqref{eq:second}, we obtain the following second degree equation:
\begin{multline*} 
r^2 \left[(\om^2\cdot\hat{\Omega}_n) + \kappa_n^2 ((\om')^2\cdot\hat{\Omega}_n) - 2\kappa_n (\om\cdot\om'\cdot\hat{\Omega}_n)
\right]
\\+ 
2r
\left[
(\om\cdot\theta'\cdot\hat{\Omega}_n) 
+
\epsilon_n(\om\cdot\om'\cdot\hat{\Omega}_n) 
- \kappa_n (\om'\cdot\theta'\cdot\hat{\Omega}_n)
\right]
\\+ 
\left[
((\theta')^2\cdot\hat{\Omega}_n) + \epsilon_n^2 ((\om')^2\cdot\hat{\Omega}_n) + 2\epsilon_n (\om'\cdot\theta'\cdot\hat{\Omega}_n)
\right]=0
\end{multline*}
with $\hat{\Omega}_n=\theta_n\cdot\Omega_n$.
Since the constant term tends to $0$ and the leading term is bounded away from $0$ by Hodge index theorem (using that $\om \cdot \Omega$ and $\om'\cdot \Omega$ are not proportional), we may choose a solution
$r_n\to0$, and this forces $r'_n\to0$.
\end{proof}

\section{Minkowski's problem and applications}\label{sec:minkowski}

Let $\om\in\cNef(\X)$ be any Cartier $b$-divisor class determined by an ample class in $X$. 

\subsection{Solving $\alpha \mapsto \alpha^{d-1}$}

\MAmp*

\begin{rem} 
The map $\alpha \mapsto  \alpha^{d-1}$ is continuous for the $\normbpfs{\cdot}$-norm (hence for the
$\normsigma{\cdot}$-norm too) and along decreasing limits. 
It is not continuous in the weak topology (pull-back Example~\ref{ex:not-continuous} by a map $C \times \Pp^2 \to \Pp^2$ where $C$ is any projective curve). 
\end{rem}

\begin{proof}
Let us prove that $M(\alpha) := \alpha^{d-1}$ is injective on the cone of big and nef classes. Recall that by Corollary~\ref{cor:Siu}, a nef class $\alpha$ satisfies  $\alpha\ge c\om$ for some $c>0$
iff $(\alpha^d)>0$.

If $\alpha$ and $\beta$ are two nef classes such that $\alpha^{d-1}= \beta^{d-1}$, then 
$(\alpha^{d}) = (\alpha\cdot \beta^{d-1})$, and $(\alpha^{d-1}\cdot \beta) = (\beta^d)$. By the log-concavity of the sequence
$k\mapsto \log (\alpha^k\cdot \beta^{d-k})$ we obtain $0<(\alpha^d)= (\beta^d) = (\alpha^k\cdot \beta^{d-k})$ for all $k$.
This  implies the equality between $\alpha$ and $\beta$ by Proposition~\ref{prop:KT}. 

\medskip

We now prove the surjectivity. Fix any class $\gamma\in\BPF^{d-1}(\X)$ such that $\gamma \ge c \om^{d-1}$ for some $c>0$.
We follow the classical variational method to solve a Monge-Amp\`ere equation. Let $K:= \{ \alpha \in \Nef(\X) \ | \  (\alpha \cdot \gamma) = 1\}$.
Note that $K$ is  compact for the weak topology of $\wnum^1(\X)$ by~\cite[Proposition~2.13]{spectral}, as $K\subset \{ \alpha \in \Nef(\X), (\alpha \cdot \om^{d-1}) \le c^{-1}\}$.
Since the function $\alpha \mapsto (\alpha^{d})$ is upper-semicontinuous by Theorem~\ref{thm:intersection} (2), there exists
a class $\alpha_\star\in K$ such that $ (\alpha_\star^{d})= \sup_K (\alpha^d)$.

Observe that $ (\alpha_\star^{d})>0$. Pick any Cartier class $\beta\in\cnum^1(\X)$. For $t$ small enough, the class $\alpha_\star + t\beta$ is pseudo-effective by Siu's inequality. 
Consider the nef envelope $P(\alpha_\star + t\beta)$ as defined on p.\pageref{p:nef-envelope}.

Pick any big and nef class $\om'\in\cNef(\X)$ such that $\alpha_\star \le \omega'$, and $\omega' \pm \beta$ are both nef (any sufficiently large ample class in a model  in which $\beta$ is determined does the job). 
Using ~\cite[Corollary~3.4]{boucksom_favre_jonsson_volume} and by taking limits of Cartier nef classes decreasing to $\alpha_\star$, we have
\[
P(\alpha_\star+ t\beta)^{d}\ge (\alpha_\star^{d}) + d t \left(\alpha_\star^{(d-1)}\cdot \beta\right) - C t^2\]
for some $C>0$ which depends only on $(\omega^d)$ and for any $0 \leq t \leq 1$. 
Observe now that 
\[\alpha_t := \frac{P(\alpha_\star+ t\beta)}{(P(\alpha_\star+ t\beta)\cdot \gamma)} \in K\, \]
 which implies
\begin{align*}
(\alpha_\star^d)
\ge
\frac{P(\alpha_\star+ t\beta)^{d}}{(P(\alpha_\star+ t\beta)\cdot \gamma)^d}
&\ge
\frac{(\alpha_\star^{d}) + d t \left(\alpha_\star^{(d-1)}\cdot \beta\right) - C t^2}{((\alpha_\star+ t\beta)\cdot \gamma)^d}
\\
&
\ge
(\alpha_\star^{d}) + d t \left(\alpha_\star^{(d-1)}\cdot \beta\right) - d t (\beta\cdot \gamma) (\alpha_\star^{d}) +O(t^2)
\end{align*}
hence
$\left(\alpha_\star^{(d-1)}\cdot \beta\right) \le (\beta\cdot \gamma) (\alpha_\star^d)$. Since this is true for $\pm \beta$
we infer $\left(\alpha_\star^{(d-1)}\cdot \beta\right) = (\beta\cdot \gamma) (\alpha_\star^d)$. 
Rescaling $\alpha_\star$, we find a nef class such that $(\alpha^{d-1}) = \gamma$ as required.
\end{proof}

\subsection{An example of $(d-1)$-th root of a Cartier class which is not Cartier}\label{sec:not-Cartier}

For sake of completeness, we include the following clever observation of
Chi Li~\cite[Lemma~4.2]{chi-li}.  We are indebted to S. Boucksom and M. Jonsson 
for pointing this reference to us, and to let us include it here. 

\begin{prop}[Chi Li]
For any big line bundle  $L\to X$, the $b$-numerical curve class $P([c_1(L)])^{d-1}\in \BPF^{d-1}(\X)$ is Cartier.
\end{prop}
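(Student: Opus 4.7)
The plan is to produce a model $X_0 \to X$ such that $P([c_1(L)])^{d-1}$ is determined, as a $b$-class, by its incarnation on $X_0$.

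Set $\alpha := [c_1(L)]$ and $P := P(\alpha) \in \Nef(\X)$. By Proposition~\ref{prop:approx-big envelope}, fix a decreasing sequence $\beta_n \downarrow P$ of nef Cartier $b$-divisor classes, with $\beta_n$ determined on a log-resolution $X_n$ of $\cJ(\mathfrak{b}_\bullet^{2^n})$. Theorem~\ref{thm:intersection}(3) then yields $\beta_n^{d-1} \downarrow P^{d-1}$ in $\num_\om^{d-1}(\X)$, so on every model $X'$ the class $(P^{d-1})_{X'}$ is an explicit decreasing limit in $\num^{d-1}(X')$.

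The key analytic input is the orthogonality $(P^{d-1} \cdot (\alpha - P)) = 0$ in $\R$. This is the $b$-divisor version of the standard orthogonality of the positive part of the Zariski-Boucksom decomposition, and it can be derived from the Diskant inequality~\eqref{eq:Diskant} applied to the pair $(\alpha, \beta_n)$: letting $s_n \geq 0$ denote the largest $s$ such that $\alpha - s \beta_n \geq 0$, passing to the limit $n \to \infty$ yields $s_n \to 1$ and $(\alpha \cdot P^{d-1}) = (P^d)$, hence $(P^{d-1} \cdot N) = 0$ where $N := \alpha - P$. Taking incarnations, on any model $X'$ we have $(P_{X'}^{d-1} \cdot N_{X'}) = 0$ with $N_{X'} := \alpha_{X'} - P_{X'} \geq 0$ pseudo-effective. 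Since $P_{X'}^{d-1}$ is movable (being the limit of basepoint-free classes $(\beta_n^{d-1})_{X'}$), it pairs nonnegatively against every prime divisor, so vanishing of the sum forces $(P_{X'}^{d-1} \cdot D) = 0$ for each prime component $D$ in the support of $N_{X'}$.

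To conclude, choose $X_0 \to X$ a log-resolution of a sufficiently deep base ideal $\mathfrak{b}_m(L)$ so that every divisorial valuation $v$ over $X$ with positive asymptotic multiplicity in $L$ is realized as a prime divisor on $X_0$. For a big line bundle $L$ in characteristic zero, the set of such valuations is finite and captured by a single birational model, via the theory of Nakayama's $\sigma$-decomposition. Then for any $\mu : X'' \to X_0$, each $\mu$-exceptional prime divisor $E$ on $X''$ lies in the support of $N_{X''}$, so $((P^{d-1})_{X''} \cdot E) = 0$ by the previous step. Combined with the push-forward identity $\mu_*(P^{d-1})_{X''} = (P^{d-1})_{X_0}$ and the projection formula tested against Cartier divisor classes pulled back from $X_0$, this forces $(P^{d-1})_{X''} = \mu^*(P^{d-1})_{X_0}$, proving that $P([c_1(L)])^{d-1}$ is determined on $X_0$.

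The main obstacle lies in the finiteness of the set of divisorial valuations with positive asymptotic multiplicity for a big line bundle, which is where the bigness hypothesis enters crucially; granted this finiteness and the orthogonality above, the Cartier conclusion follows from the standard push-forward argument on $X_0$.
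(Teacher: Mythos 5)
There is a genuine gap, and it occurs at the step you yourself flag as the main obstacle. The finiteness you need --- that only finitely many divisorial valuations $v$ \emph{over all models} have positive asymptotic multiplicity $\sigma_v(L)>0$, and that they are all realized on a single $X_0$ --- is false in general. Nakayama's theory gives finiteness only for the components of $N_\sigma$ on a \emph{fixed} model (at most $\rho(X)$ of them). Over all models the set is infinite as soon as it is nonempty: if $E$ is a prime divisor with $\sigma_E(L)>0$, then blowing up a center contained in $E$ produces a new valuation $F$ with $\ord_F\ge\ord_E$ on the base ideals, hence $\sigma_F(L)\ge\sigma_E(L)>0$, and one can iterate. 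Moreover, even granting such an $X_0$, your next assertion is backwards: an exceptional divisor $E$ of $\mu\colon X''\to X_0$ would then be a \emph{new} valuation, hence would \emph{not} lie in the support of $N_{X''}$, and the orthogonality $(P^{d-1}\cdot N)=0$ tells you nothing about $(P^{d-1}_{X''}\cdot E)$ for such $E$. This is the crux: orthogonality only controls the pairing of $P^{d-1}$ with the components of the negative part, whereas the proposition requires $(P^{d-1}_{X''}\cdot E)=0$ for \emph{every} divisor exceptional over $X$. (A smaller issue: you invoke the Diskant inequality for the pair $(\alpha,\beta_n)$ with $\alpha=[c_1(L)]$ merely big, while the paper's Diskant theorem requires both classes to be nef; the orthogonality itself is a known consequence of the differentiability of the volume, so this is repairable, but it does not rescue the argument.)

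The paper's proof is entirely different and bypasses the negative part altogether. For any composition of smooth blow-ups $\pi\colon X'\to X$ and any $\pi$-exceptional prime divisor $E$, it invokes the restricted-volume formula of Boucksom--Favre--Jonsson to write
\[
P([\pi^*c_1(L)])^{d-1}\cdot E \;=\; \lim_{n\to\infty}\frac{(d-1)!}{n^{d-1}}\dim V_n,
\qquad V_n=\mathrm{Im}\bigl(H^0(X',n\pi^*L)\to H^0(E,n\pi^*L|_E)\bigr),
\]
and observes that $V_n$ injects into $H^0(\pi_*E,\,nL|_{\pi_*E})$ with $\dim\pi_*E<d-1$, so $\dim V_n=o(n^{d-1})$ and the pairing vanishes for \emph{all} exceptional $E$. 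Since $\pi$ is a composition of smooth blow-ups, vanishing against all exceptional divisors forces $\alpha_{X'}=\pi^*\alpha_X$, so the curve class is determined already in $X$ itself --- a stronger conclusion than determination on some deep model $X_0$. If you want to salvage your strategy, the missing ingredient is precisely this geometric input showing that the $(d-1)$-fold positive product concentrates no mass on any divisor contracted by $\pi$, whether or not that divisor appears in $N$.
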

\begin{proof}
Let $\pi\colon X'\to X$ be any composition of blow-ups of smooth centers, and let $E$ be any $\pi$-exceptional divisor in $X'$.
It follows from~\cite[Theorem~B]{boucksom_favre_jonsson_volume} that
\[ 
P([\pi^*c_1(L)])^{d-1}\cdot E 
=
\lim_{n\to\infty} \frac{(d-1)!}{n^{d-1}} \dim_\C (V_n)
\]
where
\[V_n:=
\mathrm{Im} 
\left( 
H^0(X',n\pi^*L) \to H^0(E,n\pi^*L|_E)
\right)~.
\]
Observe that 
\[
V_n\subset 
H^0(E, n\pi^*L|_E)
=
H^0(\pi_*E, nL|_{\pi_*(E)})
\]
and since $\dim(\pi_*(E)) < d-1$, asymptotic Riemann-Roch's theorem implies $\dim_\C (V_n)= o(n^{d-1})$ hence
$P([\pi^*c_1(L)])^{d-1}\cdot E =0$.

Define the class $\alpha := P([c_1(L)])^{d-1}$. 
The previous computation shows 
\[(\alpha_{X'}\cdot E)=(\alpha\cdot [E])
= ( P([c_1(L)])^{d-1}\cdot [E])
= ( P([\pi^*c_1(L)])^{d-1}\cdot [E])= 
0\] for any $\pi$-exceptional divisor $E$.
Since $\pi$ is a composition of blow-ups with smooth centers, we have
$\alpha_{X'} = \pi^* \pi_*(\alpha_{X'})= \pi^*\alpha_{X}$ hence $\alpha$
is a Cartier class determined in $X$.
\end{proof}

\begin{ex}
It follows from the work of N. Nakayama~\cite[Theorem~IV.2.10]{nakayama} that there exists a big line bundle on a projective $4$-fold $L\to X$ such that 
$P(L)$ is a nef and big $b$-divisor class which is \emph{not} Cartier.  The $b$-numerical curve class $\alpha= P(L)^3$ is BPF and Cartier whereas its unique third-root is
not.
\end{ex}

\begin{rem} 
It is not difficult to find a nef class $\alpha$ that is not Cartier and such that $\alpha^{d-1}=0$, for example in a toric situation. 
 \end{rem}

\subsection{Proof of Theorem \ref{thmint_injections}}

Recall that we aim at proving the inclusions
\[
\numbpf{1}(\X) \subset \Vect(\Nef(\X)) \subset \nums(\X) \subset \numbpfs{1}(\X) \subset \wnum^1(\X). 
\]
The first and last inclusions hold by definition, see~\cite[\S 2]{spectral}. The second inclusion is Corollary~\ref{cor:nef_in_nums}.
It thus remains to prove the continuous inclusion $ \nums(\X) \subset \numbpfs{1}(\X)$. 
This fact follows from the next theorem of independent interest.
\begin{thm}\label{thm:nums-bpfs}
For any  $\alpha\in\cnum^1(\X)$, and for any $\gamma\in\BPF^{d-1}(\X)$, we have
\begin{equation}\label{eq:bpf-nums}
|(\alpha\cdot \gamma)|
\le 5 (\gamma\cdot\om^{d-1}) \, \normsigma{\alpha}
\end{equation}
\end{thm}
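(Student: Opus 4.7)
The plan is to combine the Minkowski-type Theorem~\ref{thm:MA} with a Cauchy--Schwarz inequality applied to the Hodge--Riemann bilinear form underlying the definition of $\normsigma{\cdot}$. The right-hand side is dimensionally $(\gamma\cdot\om)$ (one power of $\om$, so that the pairing is a number).

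First, since both sides of~\eqref{eq:bpf-nums} are weakly continuous in $\gamma$ and $\cBPF^{d-1}(\X)$ is weakly dense in $\BPF^{d-1}(\X)$, I may assume $\gamma\in\cBPF^{d-1}(\X)$. Replacing $\gamma$ by $\gamma+\epsilon\om^{d-1}$ and letting $\epsilon\searrow 0$ at the very end allows me to further assume $\gamma$ is big. Theorem~\ref{thm:MA} then supplies a unique big class $\beta\in\Nef(\X)$ with $\beta^{d-1}=\gamma$, and Theorem~\ref{thm:approx-nef} a sequence $\beta_n\in\cNef(\X)$ decreasing to $\beta$. I set $\delta_n:=\beta_n^{d-2}\in\cBPF^{d-2}(\X)$, $s_n:=(\delta_n\cdot\om^2)>0$, and $\hat\delta_n:=\delta_n/s_n$.

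Second, the quadratic form $Q(x):=2(x\cdot\om\cdot\hat\delta_n)^2-(x^2\cdot\hat\delta_n)$ is positive semi-definite on $\cnum^1(\X)$: via Bertini applied to a strongly BPF approximation of $\hat\delta_n$ one reduces to the classical Hodge index theorem on a smooth surface $\Sigma$, where the decomposition $x|_\Sigma=a\,\om|_\Sigma+x'$ with $x'\perp\om|_\Sigma$ (and $(\om|_\Sigma)^2=1$) yields the explicit formula $Q(x)=a^2+|(x')^2|\ge 0$. By definition of $\normsigma{\cdot}$, $Q(\alpha)\le\normsigma{\alpha}^2$, and Hodge index also gives $|(\alpha\cdot\om\cdot\hat\delta_n)|\le Q(\alpha)^{1/2}\le\normsigma{\alpha}$. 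Nefness of $\beta_m$ forces $(\beta_m^2\cdot\hat\delta_n)\ge 0$ and hence $Q(\beta_m)\le 2(\beta_m\cdot\om\cdot\hat\delta_n)^2$. Applying Cauchy--Schwarz to $Q$ and expanding the polarization identity
\[Q(\alpha,\beta_m)=2(\alpha\cdot\om\cdot\hat\delta_n)(\beta_m\cdot\om\cdot\hat\delta_n)-(\alpha\cdot\beta_m\cdot\hat\delta_n),\]
I arrive at $|(\alpha\cdot\beta_m\cdot\hat\delta_n)|\le (2+\sqrt{2})\normsigma{\alpha}\,(\beta_m\cdot\om\cdot\hat\delta_n)$; rescaling by $s_n$ turns this into the same inequality with constant $2+\sqrt 2<5$ and $\delta_n$ in place of $\hat\delta_n$.

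Third, pass successively to the limits $m\to\infty$ and then $n\to\infty$: Theorem~\ref{thm:intersection}(3) supplies $\beta_m\cdot\delta_n\to\beta\cdot\delta_n$ in $\num_\om^{d-1}(\X)$ and, iterated, $\delta_n=\beta_n^{d-2}\downarrow\beta^{d-2}$ in $\num_\om^{d-2}(\X)$; pairing with the fixed Cartier class $\alpha\in\cnum^1(\X)$ is continuous in the $\normom{\cdot}$-topology. This delivers
\[|(\alpha\cdot\gamma)|=|(\alpha\cdot\beta^{d-1})|\le 5\,(\beta^{d-1}\cdot\om)\,\normsigma{\alpha}=5\,(\gamma\cdot\om)\,\normsigma{\alpha},\]
and sending $\epsilon\to 0$ concludes the proof.

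The main obstacle is Step~2, where one must carefully verify the positive semi-definiteness of $Q$ on $\cnum^1(\X)$: this requires reducing to the classical Hodge index theorem on a genuine smooth surface by approximating $\hat\delta_n$ by strongly BPF classes, invoking Bertini to represent them by actual surfaces, and then showing the bound survives the limit within $\cBPF^{d-2}(\X)$. Careful bookkeeping of the three nested limits $m\to\infty$, $n\to\infty$, and $\epsilon\to 0$ using the continuity statements of Theorem~\ref{thm:intersection} is also needed.
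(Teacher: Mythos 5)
Your proposal is correct and follows essentially the same route as the paper: perturb $\gamma$ by $\epsilon\om^{d-1}$ to make it big, extract its $(d-1)$-th root $\beta\in\Nef(\X)$ via Theorem~\ref{thm:MA}, and bound $(\alpha\cdot\beta^{d-1})$ by a Cauchy--Schwarz inequality for the quadratic form $x\mapsto 2(x\cdot\om\cdot\hat\delta)^2-(x^2\cdot\hat\delta)$ attached to the normalized class $\hat\delta=\beta^{d-2}/s$, before letting the perturbation go to zero. The only (harmless) difference is that the paper imports this Cauchy--Schwarz step from~\cite[Theorem~3.11]{spectral} (yielding the constant $3\sqrt2$), whereas you re-derive it from the Hodge index theorem on a surface representing $\beta_n^{d-2}$ and obtain the slightly better constant $2+\sqrt2$; like the paper's own proof, you also correctly read the right-hand side of~\eqref{eq:bpf-nums} as $(\gamma\cdot\om)$.
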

Let  $\alpha_n\in\cnum^1(\X)$ be any Cauchy sequence 
for the norm  $\normsigma{\cdot}$. Then~\eqref{eq:bpf-nums} implies that $\alpha_n$ is a Cauchy sequence for $\normbpfs{\cdot}$,
proving  the continuous injection  $ \nums(\X) \subset \numbpfs{1}(\X)$.

\begin{proof}
Pick any $\gamma \in\BPF^{d-1}(\X)$. By Theorem~\ref{thm:MA}, for any $t>0$ we may find a class
$\delta_t\in\Nef(\X)$ such that $\gamma + t\om^{d-1} = (\delta_t^{d-1})$. Let $s = (\delta_t^{d-2}\cdot\om^2)$.
We now estimate using~\cite[Theorem~3.11]{spectral}:
\begin{align*}
\left|
(\alpha\cdot (\gamma+t\om^{d-1}))
\right|^2
&=
\left|
(\alpha\cdot \delta_t^{d-1})
\right|^2
\\
&\le 
9
q_{\om, \frac1s \delta_t^{d-2}}(\alpha)
q_{\om, \frac1s \delta_t^{d-2}}(s \delta_t)
\\
&\le 
18
( (\gamma+t\om^{d-1}) \cdot\om)^2 \normsigma{\alpha}^2,
\end{align*}
and we conclude by letting $t\to0$.
\end{proof}

\begin{cor}
For any $\alpha,\beta \in\nums(\X)$, the maps 
\[\gamma \in\BPF^k(\X)\mapsto \alpha\cdot \beta\cdot\gamma\in\BPF^{k+2}(\X)
\text{ and } \gamma \in\BPF^k(\X)\mapsto \alpha\cdot \gamma\in\BPF^{k+1}(\X)
\]
are weakly continuous.
\end{cor}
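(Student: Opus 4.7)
We first extend the definition of $\alpha\cdot\gamma$ (respectively $\alpha\cdot\beta\cdot\gamma$) from the case of Cartier $\alpha$ (respectively Cartier $\alpha,\beta$) using the continuity estimate in Theorem~\ref{thm:nums-bpfs}, and then establish weak continuity via a standard three-$\epsilon$ argument.

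For the single-variable map, fix $\alpha\in\nums(\X)$ and choose Cartier classes $\alpha_m\in\cnum^1(\X)$ with $\normsigma{\alpha-\alpha_m}\to0$. When $\alpha_m$ is Cartier, the map $\gamma\in\BPF^k(\X)\mapsto\alpha_m\cdot\gamma$ is weakly continuous, since working on any single model $X'$ dominating a determination of $\alpha_m$, multiplication by the fixed class $(\alpha_m)_{X'}$ in the finite-dimensional space $\num^k(X')$ is manifestly continuous. To control the error $(\alpha-\alpha_m)\cdot\gamma$ uniformly in $\gamma$, we test against an arbitrary Cartier class $\delta\in\cnum^{d-k-1}(\X)$, decompose $\delta=\delta_+-\delta_-$ with $\delta_\pm\in\cBPF^{d-k-1}(\X)$, and observe that $\gamma\cdot\delta_\pm\in\BPF^{d-1}(\X)$ (see last paragraph). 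Theorem~\ref{thm:nums-bpfs} then yields
\[
|((\alpha-\alpha_m)\cdot\gamma\cdot\delta_\pm)|\le 5\,(\gamma\cdot\delta_\pm\cdot\om^{d-1})\,\normsigma{\alpha-\alpha_m}~.
\]
Given a net $\gamma^{(j)}\to\gamma$ weakly in $\BPF^k(\X)$, the numbers $(\gamma^{(j)}\cdot\delta_\pm\cdot\om^{d-1})$ are computed on a single fixed model where $\delta_\pm$ and $\om$ are determined, hence converge and are uniformly bounded by some $M$. A three-$\epsilon$ argument then gives $(\alpha\cdot\gamma^{(j)}\cdot\delta)\to(\alpha\cdot\gamma\cdot\delta)$ for every Cartier $\delta$, which is precisely weak convergence in $\wnum^{k+1}(\X)$.

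For the two-variable map $\gamma\mapsto\alpha\cdot\beta\cdot\gamma$, the cleanest route is to invoke Theorem~\ref{thm_intersection_nums} and view $\alpha\cdot\beta$ as a fixed element of $\numbpfs{2}(\X)\hookrightarrow\wnum^2(\X)$. Given Cartier approximations $\alpha_m\to\alpha$ and $\beta_m\to\beta$ in $\normsigma{\cdot}$, Theorem~\ref{thm_intersection_nums} provides $\alpha_m\cdot\beta_m\to\alpha\cdot\beta$ in $\numbpfs{2}(\X)$; the pairing with a Cartier BPF class yields the tautological bound
\[
|((\alpha\cdot\beta-\alpha_m\cdot\beta_m)\cdot\gamma\cdot\delta)|\le (\gamma\cdot\delta\cdot\om^2)\,\normbpfs{\alpha\cdot\beta-\alpha_m\cdot\beta_m}
\]
for $\delta\in\cBPF^{d-k-2}(\X)$, using that $\gamma\cdot\delta\in\BPF^{d-2}(\X)$. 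The same three-$\epsilon$ scheme as before concludes, the role of Theorem~\ref{thm:nums-bpfs} being played by the definition of $\normbpfs{\cdot}$.

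The main technical point is to justify that for $\gamma\in\BPF^k(\X)$ and a Cartier class $\delta\in\cBPF^l(\X)$, the product $\gamma\cdot\delta$ lies in $\BPF^{k+l}(\X)$, which is what makes the two key estimates above applicable. This follows from the continuity of Cartier multiplication against a Cartier factor: if $\gamma^{(i)}\to\gamma$ with $\gamma^{(i)}\in\cBPF^k(\X)$, then $\gamma^{(i)}\cdot\delta\in\cBPF^{k+l}(\X)$ converges weakly to $\gamma\cdot\delta$, which therefore lies in the closed cone $\BPF^{k+l}(\X)$. Modulo this bookkeeping, the argument is formally a direct extraction from Theorem~\ref{thm:nums-bpfs} and Theorem~\ref{thm_intersection_nums}.
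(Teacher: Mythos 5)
Your proposal is correct and follows essentially the same route as the paper: approximate $\alpha$ (and $\beta$) by Cartier classes in the $\normsigma{\cdot}$-norm, use Theorem~\ref{thm:nums-bpfs} (resp.\ the continuity of the product into $\numbpfs{2}(\X)$ from Theorem~\ref{thm_intersection_nums}) to get an error bound uniform along the net, and conclude by a three-$\epsilon$ argument using weak continuity of multiplication by a fixed Cartier class. The only cosmetic remark is that the factor $(\gamma\cdot\delta_\pm\cdot\om^{d-1})$ you quote should be read as $(\gamma\cdot\delta_\pm\cdot\om)$ (the exponent is a typo already present in the statement of Theorem~\ref{thm:nums-bpfs}), and your explicit check that $\gamma\cdot\delta\in\BPF^{d-1}(\X)$ is a welcome detail the paper leaves implicit.
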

\begin{proof}
Pick $\alpha\in\nums(\X)$ and choose a sequence $\alpha_n\in\cnum^1(\X)$ such that 
$\normsigma{\alpha-\alpha_n} \to0$.
Suppose $\gamma_i\in\BPF^k(\X)$ is a net of BPF classes converging weakly to $\gamma$. We may suppose that 
$C:= \sup_i(\gamma_i\cdot \om^{d-k})< \infty$.
Pick any BPF Cartier $b$-numerical class $\beta\in\cBPF^{d-k-1}(\X)$. 
By~\eqref{eq:bpf-nums} and Corollary~\ref{cor:Siu}, we have
\[
|(\alpha\cdot\gamma_i\cdot\beta) -(\alpha_n\cdot\gamma_i\cdot\beta) | \le 5 (\gamma_i\cdot\beta\cdot \om^{d-k})\normsigma{\alpha-\alpha_n} 
\le CC_d (\beta\cdot \om^{k+1})\,\normsigma{\alpha-\alpha_n} 
~.\]
Pick any $\epsilon>0$. Choose $n_0$ large enough such that $CC_d (\beta\cdot \om^{k+1})\,\normsigma{\alpha-\alpha_{n_0}} \le\epsilon$. 
Since the class $\alpha_{n_0}$ is Cartier, the map $\eta \mapsto \alpha_{n_0}\cdot \eta$ is weakly continuous, hence for $i$ large enough
we have
\[
|(\alpha\cdot\gamma_i\cdot\beta) -(\alpha\cdot\gamma\cdot\beta) | \le 
2\epsilon+ |(\alpha_{n_0}\cdot\gamma_i\cdot\beta) -(\alpha_{n_0}\cdot\gamma\cdot\beta) | \le 3\epsilon~.
\]
This proves $\gamma \in\BPF^k(\X)\mapsto \alpha\cdot \gamma$ is weakly continuous.

The fact that $\gamma\mapsto \alpha\cdot \beta\cdot\gamma$ is continuous follows by the same token and~\cite[Theorem~3.16]{spectral}.
\end{proof}

\subsection{Approximation of  classes in $\BPF^{d-1}(\X)$}
The resolution of the operator $\alpha\mapsto\alpha^{d-1}$ yields interesting consequences for the structure of $\BPF^{d-1}(\X)$.

\begin{cor}\label{cor:approx-curve}
For any $\alpha\in\BPF^{d-1}(\X)$ there exists a sequence of classes $\alpha_n\in\cBPF^{d-1}(\X)$
which is decreasing to $\alpha$. If moreover $\alpha$ is big, then for any $\epsilon>0$ there exists
a class $\alpha'\in\cBPF^{d-1}(\X)$ such that 
$(1-\epsilon) \alpha' \le \alpha \le \alpha'$.
\end{cor}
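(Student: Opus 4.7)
The plan is to treat the case when $\alpha$ is big via Theorem~\ref{thm:MA}, then deduce the general case by a diagonal argument, and finally obtain the sharper statement from Corollary~\ref{cor_best_approx} applied to the nef root of $\alpha$.

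Suppose first that $\alpha$ is big, so $\alpha \ge c\,\om^{d-1}$ for some $c>0$. By Theorem~\ref{thm:MA}, write $\alpha = \gamma^{d-1}$ for a big class $\gamma \in \Nef(\X)$, and apply Theorem~\ref{thm:approx-nef} to find a decreasing sequence $\gamma_n \in \cNef(\X)$ with $\gamma_n \downarrow \gamma$. Set $\alpha_n := \gamma_n^{d-1}$. Each $\alpha_n$ is Cartier, and its incarnation in a common determination model is the self-intersection of a nef divisor, which is a limit of complete intersections of ample divisors; hence $\alpha_n \in \cBPF^{d-1}(\X)$. The monotonicity of the intersection product of nef classes (Theorem~\ref{thm:intersection}(1)) guarantees that $\alpha_n$ is decreasing, while Theorem~\ref{thm:intersection}(3) yields $\alpha_n \to \gamma^{d-1}=\alpha$ in $\num^{d-1}_\om(\X)$, and therefore also weakly, since $\normom{\cdot}$-convergence forces vanishing in every finite-dimensional incarnation.

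For a general $\alpha \in \BPF^{d-1}(\X)$, set $\alpha^{(m)} := \alpha + \tfrac{1}{m}\om^{d-1}$, which is big and BPF. Applying the previous step to each $\alpha^{(m)}$ furnishes decreasing sequences $\beta^{(m)}_n \in \cBPF^{d-1}(\X)$ with $\beta^{(m)}_n\downarrow \alpha^{(m)}$ in the $\om$-norm. Choose $\delta_1$ arbitrarily, and inductively pick, for $m \ge 2$, an index $n_m$ such that
\[
0 \le \beta^{(m)}_{n_m} - \alpha^{(m)} \le \bigl(\tfrac{1}{m-1} - \tfrac{1}{m}\bigr)\om^{d-1},
\]
and set $\delta_m := \beta^{(m)}_{n_m}$. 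Then $\delta_m \le \alpha^{(m-1)} \le \delta_{m-1}$, so the sequence is decreasing; and the bound $0 \le \delta_m - \alpha \le \tfrac{1}{m-1}\om^{d-1}$ gives $\normom{\delta_m-\alpha}\to 0$, hence weak convergence.

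For the sharper statement when $\alpha$ is big, combine Theorem~\ref{thm:MA} (writing $\alpha=\gamma^{d-1}$) with Corollary~\ref{cor_best_approx} applied to $\gamma$: this produces $\gamma' \in \cNef(\X)$ with $(1-\epsilon')\gamma' \le \gamma \le \gamma'$. Taking $(d-1)$-th powers preserves these inequalities among nef classes, so $(1-\epsilon')^{d-1}(\gamma')^{d-1} \le \alpha \le (\gamma')^{d-1}$; setting $\alpha' := (\gamma')^{d-1} \in \cBPF^{d-1}(\X)$ and choosing $\epsilon'$ small enough that $(1-\epsilon')^{d-1} \ge 1-\epsilon$ yields the desired approximation. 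The main subtlety lies in the diagonal extraction in the non-big case, where the $\om$-norm control granted by Theorem~\ref{thm:intersection}(3) is essential in order to reconcile monotonicity with convergence to $\alpha$.
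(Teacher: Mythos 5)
Your proof is correct and follows essentially the same route as the paper: both arguments reduce to the big class $\alpha+\tfrac1n\om^{d-1}$, extract its nef $(d-1)$-th root via Theorem~\ref{thm:MA}, approximate that root by Cartier nef classes, and raise to the $(d-1)$-th power, using monotonicity of the intersection product of nef classes throughout. The only difference is bookkeeping: the paper obtains a multiplicative sandwich $\alpha+\tfrac1n\om^{d-1}\le\alpha_n\le(1+2^{-n})(\alpha+\tfrac1n\om^{d-1})$ directly from Corollary~\ref{cor_best_approx} and reads off monotonicity from it, whereas you use Theorem~\ref{thm:approx-nef} together with an additive $\normom{\cdot}$-controlled diagonal extraction, which works equally well.
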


\begin{proof}
We may suppose that $\alpha\le \om^{d-1}$. 
We claim that for each integer $n$, there exists a class $\alpha_n\in\cBPF^{d-1}(\X)$
such that 
\begin{equation}\label{eq:better}
\alpha+ \frac1n \om^{d-1}
\le \alpha_n \le 
(1+\frac1{2^n}) \left(\alpha+ \frac1n \om^{d-1}\right)
\end{equation}
Since for all $n\ge2$
\begin{align*}
\alpha_{n+1}
&\le 
(1+\frac1{2^{n+1}}) \left(\alpha + \frac1{n+1} \om^{d-1}\right)
\\
&\le 
\alpha_n - \frac1{n(n+1)} \om^{d-1}
+\frac1{2^{n+1}} \left(\alpha + \frac1{n+1} \om^{d-1}\right)\le \alpha_n
\end{align*}
the sequence $\alpha_n$ is decreasing to $\alpha$ as required. When $\alpha$ is big, then for any $\epsilon>0$, we have
$\om^{d-1}/n \le \epsilon \alpha$ for all $n$ large enough, hence it remains to prove~\eqref{eq:better}.

By Theorem~\ref{thm:MA}, one can find a nef and big class $\beta' \in\Nef(\X)$ such that 
$(\beta')^{d-1} = \alpha + \frac1n \om^{d-1}$. 
By Corollary~\ref{cor_best_approx}, one can find a nef Cartier class $\beta(\epsilon)$ such that $\beta' \le \beta(\epsilon) \le (1+\epsilon) \beta'$ 
and we conclude by taking $\alpha_n := \beta(\epsilon)^{d-1}$ with 
$(1+\epsilon)^{d-1} \le 1+ \frac1n$.
\end{proof}
Recall from the definition of the space $\nums(\X)$ ~\cite[\S 3.2]{spectral} that the topological dual of the Banach space $\nums(\X)$ is characterized as follows:
\[
\nums(\X)^* = \left\{ \alpha \in \wnum^{d-1}(\X), \, \exists C>0, | (\alpha\cdot \beta)| \le C \normsigma{\beta} \text{ for all } \beta\in\cnum^1(\X)\right\}.
\]
Since~\eqref{eq:bpf-nums} holds for any class $\alpha\in\nums(\X)$ by density, we get:
\begin{cor}\label{thm:pairing-dim1}
We have
\begin{equation}\label{eq:inclu}
\BPF^{d-1}(\X) \subset \nums(\X)^*.
\end{equation}
\end{cor}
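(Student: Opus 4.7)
The plan is to deduce this corollary immediately from Theorem~\ref{thm:nums-bpfs}. Recall the characterization of the dual
\[
\nums(\X)^* = \left\{ \alpha \in \wnum^{d-1}(\X), \, \exists C>0, | (\alpha\cdot \beta)| \le C \normsigma{\beta} \text{ for all } \beta\in\cnum^1(\X)\right\},
\]
so to prove $\gamma\in\nums(\X)^*$ for a given $\gamma\in\BPF^{d-1}(\X)$, it suffices to exhibit a constant $C>0$ such that the bound $|(\beta\cdot\gamma)|\le C\normsigma{\beta}$ holds for every Cartier $b$-divisor class $\beta\in\cnum^1(\X)$.

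This bound is delivered verbatim by Theorem~\ref{thm:nums-bpfs} with the explicit constant $C=5(\gamma\cdot\om^{d-1})$; the only remark to make is that this constant is well-defined and non-negative, which is automatic from the fact that $\om^{d-1}\in\cnum^{d-1}(\X)$ so that the pairing $(\gamma\cdot\om^{d-1})$ is well-defined for $\gamma\in\wnum^{d-1}(\X)$, and that this pairing is non-negative because $\om$ is nef and $\gamma$ is BPF. Consequently the linear form $\beta\mapsto(\beta\cdot\gamma)$ on $\cnum^1(\X)$ is $\normsigma{\cdot}$-continuous, and extends uniquely to a continuous linear form on the completion $\nums(\X)$. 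This precisely says $\gamma\in\nums(\X)^*$, yielding the inclusion~\eqref{eq:inclu}.

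Since the statement is a formal consequence of the quantitative inequality~\eqref{eq:bpf-nums}, no further obstacle arises; the whole content of the corollary lies in Theorem~\ref{thm:nums-bpfs}, whose proof in turn rests on the solution of the Minkowski problem (Theorem~\ref{thm:MA}) combined with the Hodge-type inequality from~\cite[Theorem~3.11]{spectral}.
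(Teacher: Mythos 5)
Your proposal is correct and is exactly the paper's argument: the corollary is stated as an immediate consequence of Theorem~\ref{thm:nums-bpfs} together with the characterization of $\nums(\X)^*$, with the same constant $C=5(\gamma\cdot\om^{d-1})$. Nothing further is needed.
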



\section{Movable cones and  Lehmann-Xiao's decompositions of curve classes}

In \S \ref{sec:movable}, we discuss the notion of movable classes as defined 
by Fulger and Lehmann and relate it to
positivity properties of $b$-numerical classes. In the last two sections \S\ref{sec:LX} and \S\ref{sec:d-root}, we compare the $(d-1)$-th root
of a BPF $b$-curve class given by Theorem~\ref{thm:MA} to two divisor classes introduced by Lehmann and Xiao
in~\cite{MATH06656980} and~\cite{lehmann_xiao_positivity}.

\subsection{Movable classes}\label{sec:movable}

Let us first introduce the notion of movable classes in the sense of Fulger and Lehmann, see~\cite[\S3]{fulger_lehmann_zariski}.
An effective $k$-cycle $Z$ in $X$ is said to be strictly movable if there exists a variety $\Lambda$ and a reduced closed subscheme $V$ of $X\times \Lambda$
such that for each irreducible component $V_i$ of $V$:
\begin{itemize}
\item
the first projection map $V_i \to X$ is dominant for each $i$;
\item
the second projection map $\pi_i\colon V_i \to \Lambda$ is flat and dominant of relative dimension $k$ for each $i$;
\item
and
$Z= \sum a_i [\pi_i^{-1}(\lambda)]$ for some $\lambda \in \Lambda$ and some $a_i \in \N$.
\end{itemize}
A class $\alpha\in\num^k(X)$ is strictly movable if it is fundamental class of a strictly movable $k$-cycle.
Roughly speaking, a strictly movable class is one represented by a $k$-cycle which 
moves in a family covering $X$.
\begin{defi}\label{defi:movable}
The movable cone is the closed convex cone in $\num^k(X)$ generated by strictly movable classes. 
\end{defi}

It is not difficult to see that this notion coincides with the one given in \S\ref{sec:numerical cycles} when $k=1$. 
It also follows from~\cite[Proposition~3.13]{fulger_lehmann_zariski}  
that the movable cone is equal to the closure of $\sum_\pi \pi_* \BPF^k(X')$ where $\pi\colon X' \to X$
varies over all (smooth) birational models over $X$.

We have the following interpretation of movable classes in terms of $b$-numerical classes.
\begin{thm}\label{thm:interpret-BPF}
For any class $\alpha\in\num^k(X)$ the following are equivalent: 
\begin{enumerate}
\item
$\alpha$ is movable;
\item
there exists a $b$-numerical class $\beta\in\BPF^k(\X)$ such that $\beta_X = \alpha$.
\end{enumerate}
In particular, a $b$-numerical class $\beta\in \wnum^{k}(\X)$ is BPF iff
$\beta_{X'}$ is movable for any model $X'$.
\end{thm}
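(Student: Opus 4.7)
The plan is to prove the equivalence $(1)\Leftrightarrow(2)$ and then deduce the final statement as a formal consequence. For $(2)\Rightarrow(1)$, I would unravel the definitions: any $\beta\in\BPF^k(\X)$ is a weak limit of a net $\beta_i\in \cBPF^k(\X)$, and each $\beta_i$ is a positive combination $\sum_j[\gamma_{i,j}]$ with $\gamma_{i,j}\in\BPF^k(X_{i,j})$. Taking the $X$-incarnation gives $(\beta_i)_X=\sum_j(\pi_{i,j})_*\gamma_{i,j}$, which lies in the cone $\sum_\pi \pi_*\BPF^k(X')$. By~\cite[Proposition~3.13]{fulger_lehmann_zariski} this cone is dense in the movable cone, which is closed by Definition~\ref{defi:movable}, so $\alpha=\beta_X=\lim_i(\beta_i)_X$ is movable.

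For the converse $(1)\Rightarrow(2)$, I plan to use weak compactness. Using~\cite[Proposition~3.13]{fulger_lehmann_zariski}, write $\alpha=\lim_n\sum_j(\pi_{n,j})_*\gamma_{n,j}$ with $\gamma_{n,j}\in\BPF^k(X_{n,j})$, and form $\beta_n:=\sum_j[\gamma_{n,j}]\in\cBPF^k(\X)$, so that $(\beta_n)_X\to\alpha$. The key step is to extract a weakly convergent subnet $\beta_{n_i}\to\beta$ in $\wnum^k(\X)$, which requires bounding $(\beta_n)_{X'}$ uniformly in each $\num^k(X')$. Fixing an ample $\omega_X$ on $X$ and setting $\omega:=[\omega_X]\in\cnum^1(\X)$, Siu's inequality (Proposition~\ref{prop_Siu}) applied to $\gamma_{n,j}$ with the big and nef class $\pi_{n,j}^*\omega_X$ yields
\[
\gamma_{n,j}\le \frac{C_d}{(\omega_X^d)}\bigl((\pi_{n,j})_*\gamma_{n,j}\cdot\omega_X^{d-k}\bigr)(\pi_{n,j}^*\omega_X)^k.
\]
Since $[(\pi_{n,j}^*\omega_X)^k]$ coincides with $[\omega_X^k]=\omega^k$ as Cartier $b$-classes, propagating the inequality via pullback to a common refinement and pushforward yields the $b$-class bound $[\gamma_{n,j}]\le \tfrac{C_d}{(\omega_X^d)}\bigl((\pi_{n,j})_*\gamma_{n,j}\cdot\omega_X^{d-k}\bigr)\omega^k$. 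Summing over $j$ gives $0\le\beta_n\le\tfrac{C_d}{(\omega_X^d)}\bigl((\beta_n)_X\cdot\omega_X^{d-k}\bigr)\omega^k$, a bound that is uniform in $n$ since $(\beta_n)_X\to\alpha$. Passing to a convergent subnet $\beta_{n_i}\to\beta$ places $\beta\in\BPF^k(\X)$, and continuity of the $X$-incarnation forces $\beta_X=\alpha$.

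For the last statement, the implication ``BPF in $\wnum^k(\X)$ $\Rightarrow$ movable in every $X'$'' is obtained by running the argument of $(2)\Rightarrow(1)$ with $X$ replaced by an arbitrary model $X'$: the $X'$-incarnation of a Cartier BPF $b$-class is a finite sum of pushforwards of BPF classes under birational maps $X''\to X'$, and weak limits of movable classes remain movable. The converse ``movable in every $X'$ $\Rightarrow$ BPF'' uses the fact that the weak topology on $\wnum^k(\X)$ admits a basis of neighborhoods of the form $\{\delta:\delta_{X''}\in U''\}$ indexed by a single dominating model $X''$; Fulger--Lehmann's characterization then lets us approximate the movable class $\beta_{X''}$ by $\sum_j(\pi_j)_*\gamma_j$ with $\gamma_j$ BPF, so that $\sum_j[\gamma_j]\in\cBPF^k(\X)$ sits in the given neighborhood of $\beta$, witnessing $\beta\in\BPF^k(\X)$. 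The main obstacle is the boundedness step in $(1)\Rightarrow(2)$: Siu's inequality is straightforward on the source model $X_{n,j}$, but one must carefully verify its propagation across all models $X'\in\cM_X$ as an honest $b$-class inequality, using that the bounding Cartier class $\omega^k$ has compatible incarnations everywhere.
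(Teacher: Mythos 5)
Your proof is correct and follows essentially the same route as the paper's: Fulger--Lehmann's description of the movable cone as the closure of $\sum_\pi \pi_*\BPF^k(X')$, a Siu-type mass bound giving weak compactness of the approximating net for $(1)\Rightarrow(2)$, and the fact that pushforward preserves movability for $(2)\Rightarrow(1)$, together with the same net/neighborhood-basis argument for the final assertion. One caution on your boundedness step: you should not literally ``propagate the inequality via pullback,'' since pulling back the pseudo-effective difference $c(\pi_{n,j}^*\omega_X)^k-\gamma_{n,j}$ need not stay pseudo-effective (the paper explicitly warns that psefness is only preserved by pushforward); the correct one-line fix is to reapply Proposition~\ref{prop_Siu} on each common refinement to the pulled-back class, which is still BPF and has the same pairing with powers of $\omega_X$, and then push forward to the target model.
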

\begin{rem}
Since the cone $\BPF^{d-1}(\X)$ is the (weak) closure of classes
$[\alpha_{X'}]^{d-1}$ with $\alpha_{X'}\in \num^1(X')$ ample, 
we see that  Definition~\ref{defi:movable} is also consistent with the one given in \S\ref{sec:numerical cycles} 
in the case $k=d-1$.
\end{rem}

\begin{proof}

Suppose  $\alpha\in\num^k(X)$  is movable.  By~\cite[Proposition~3.13]{fulger_lehmann_zariski}, we can find a sequence of models $\pi_n\colon X_n \to X$
and classes $\beta_n\in \BPF^k(X_n)$ such that $(\pi_n)_*\beta_n \to \alpha$. 
Note that $(\beta_n\cdot \om^{d-k})$ is uniformly bounded, hence Banach-Alaoglu's theorem implies
that $F_p= \overline{\{\beta_n, n\ge p\}}$ is weakly compact for all $p$. 
Pick any class $\beta\in\cap_{p\ge1} F_p$. 
Then $\beta$ lies in $\BPF^k(\X)$ and 
we have $\beta_X = \alpha$ which proves (1)$\Rightarrow$(2).

Conversely, suppose  $\beta\in\BPF^k(\X)$. By definition, we can find a sequence of classes $\beta_n\in \cBPF^k(X)$
such that $(\beta_n)_X \to \beta_X$. Since $(\beta_n)_X$ is the image under a proper birational morphism of a BPF class, 
it is movable, hence $\beta_X$ is movable too. This proves (2)$\Rightarrow$(1).

Finally pick any class $\beta\in \wnum^{k}(\X)$, and suppose that $\beta_{X'}$ is movable for any model $X'$.
By what precedes, for each $X'$ there exists a class $\tilde{\beta}(X')\in\BPF^k(\X)$ such that 
$\tilde{\beta}(X')_{X'}=\beta_{X'}\in\num^k(X')$. 
This implies that the net $\tilde{\beta}(X')$ converges (as $b$-numerical classes) to $\beta$, hence
$\beta$ is BPF.
\end{proof}

An easy consequence of the previous theorem is the following analog of Theorem~\ref{thm:cartier_nef}.

\begin{prop} \label{pro:cartier_nef_curve} 
We have \[\cnum^{d-1}(\X) \cap \BPF^{d-1}(\X) = \cBPF^{d-1}(\X).\] 
In other words any Cartier  $b$-numerical curve class that is BPF as a $b$-numerical curve class is determined by a BPF class in some model.
 \end{prop}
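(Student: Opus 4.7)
The plan is to reduce to a single-model statement: if the Cartier $b$-class $\alpha$ is determined in a model $X_0$ by $\alpha_{X_0}\in\num^{d-1}(X_0)$, I will show $\alpha_{X_0}\in\BPF^{d-1}(X_0)$, which yields $\alpha=[\alpha_{X_0}]\in\cBPF^{d-1}(\X)$ at once. The inclusion $\cBPF^{d-1}(\X)\subseteq\cnum^{d-1}(\X)\cap\BPF^{d-1}(\X)$ is immediate from the definitions, so only the converse requires proof.

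Fix $\alpha\in\cnum^{d-1}(\X)\cap\BPF^{d-1}(\X)$, determined in a smooth model $X_0$. Theorem~\ref{thm:interpret-BPF} applied to $\alpha$ tells us that the incarnation $\alpha_{X_0}$ is movable in $\num^{d-1}(X_0)$ in the Fulger-Lehmann sense of Definition~\ref{defi:movable}. Since BPF classes in a single model are stable under pullback (see the discussion after the definition of $\cBPF^k(\X)$ in \S\ref{section_bclass}), it is enough to prove $\alpha_{X_0}\in\BPF^{d-1}(X_0)$.

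I then invoke the equality $\BPF^{d-1}(X_0)=\Mov^{d-1}(X_0)$ of the movable and BPF cones of $1$-cycles on the smooth projective variety $X_0$. The inclusion $\BPF^{d-1}(X_0)\subseteq\Mov^{d-1}(X_0)$ is straightforward, since every strongly BPF curve class is represented as a member of a flat covering family. For the reverse inclusion, $\Mov^{d-1}(X_0)$ is the closure of the cone generated by classes $\pi_*(A_1\cdots A_{d-1})$ with $\pi\colon Y\to X_0$ birational and $A_i$ ample on $Y$; each such class can be approximated by strongly BPF curve classes on $X_0$ via generic flatness, as recorded in \cite[Proposition~3.13]{fulger_lehmann_zariski} and the remark following Theorem~\ref{thm:interpret-BPF}. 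Hence $\alpha_{X_0}\in\BPF^{d-1}(X_0)$ and the proof is complete.

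The main obstacle is the single-model identification $\BPF^{d-1}(X_0)=\Mov^{d-1}(X_0)$, which plays here the role that the equality $\BPF^1(X')=\Nef^1(X')$ plays in the proof of Theorem~\ref{thm:cartier_nef}. The higher-codimension analog $\cnum^{k}(\X) \cap \BPF^k(\X) = \cBPF^k(\X)$ for $2\le k\le d-2$ cannot be reached by this route because such a single-model identification is precisely what is missing there, as the authors note in the remark following Theorem~\ref{thm:cartier_nef}.
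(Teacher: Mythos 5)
Your overall route is the same as the paper's: both arguments reduce to the determination $X_0$ and both turn on comparing the movable cone with the BPF cone of curve classes on a \emph{single} smooth model. (The paper runs the contrapositive: if $\alpha_{X'}$ is not BPF, hence not movable, then~\cite[Corollary 2.5]{boucksom_demailly_paun_peternell} produces a pseudo-effective divisor class $\beta$ with $(\alpha_{X'}\cdot\beta)<0$, and the psef Cartier $b$-divisor class $[\beta]$ then contradicts $\alpha\in\BPF^{d-1}(\X)$.)

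The step to worry about is exactly the one you isolate, namely $\Mov^{d-1}(X_0)\subseteq\BPF^{d-1}(X_0)$, and the justification you offer for it does not hold up. Generic flatness is not enough: the covering family of curves representing $\pi_*(A_1\cdots A_{d-1})$ has total space mapping to $X_0$ \emph{through} the birational morphism $\pi\colon Y\to X_0$, so the fibre dimension jumps over the centre of $\pi$ and the structure map is not flat there, whereas strong base-point-freeness requires flatness over all of $X_0$ (restricting to the flat locus destroys properness of the members of the family and changes the class). Likewise, \cite[Proposition~3.13]{fulger_lehmann_zariski} identifies $\Mov^{d-1}(X_0)$ with the closure of $\sum_\pi\pi_*\BPF^{d-1}(X')$ taken over all models $X'\to X_0$, which is a priori larger than $\BPF^{d-1}(X_0)$ precisely because pushforward by a birational morphism does not preserve the BPF property (as the paper notes in \S1.1); and the remark following Theorem~\ref{thm:interpret-BPF} only identifies the two notions of \emph{movable}, not movable with BPF. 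So the single-model equality is not ``recorded'' in anything you cite. To be fair, the paper's own proof compresses the very same implication into the sentence ``$\alpha_{X'}$ is not BPF, by the previous result $\alpha_{X'}$ is not movable'' without further argument, so you have correctly located the crux of the statement; but your paragraph does not prove it, and you should either supply an actual proof of $\Mov^{d-1}(X_0)\subseteq\BPF^{d-1}(X_0)$ or state explicitly that this comparison of cones on a fixed model is being taken as an external input.
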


\begin{proof}
Pick any $\alpha \in \cBPF^{d-1}(\X)$, and suppose that 
$\alpha$ is determined in some model $X'$ and $\alpha_{X'}\in \num^{d-1}(X')$ is not BPF. By the previous result, $\alpha_{X'}$ is not movable, hence
by the duality theorem \cite[Corollary 2.5]{boucksom_demailly_paun_peternell}, one can find a pseudoeffective class $\beta \in \num^1(X')$
such that $(\alpha_{X'}\cdot\beta) <0$. 

Observe that the Cartier $b$-divisor class $[\beta]$ is psef since the pull-back of any pseudoeffective
divisor remains psef. We then obtain $(\alpha\cdot [\beta]) < 0$ 
which implies that $\alpha$ cannot belong to $\BPF^{d-1}(\X)$.
\end{proof}

Recall that a $b$-numerical class $\alpha$ is psef iff $\alpha_{X'}\ge0$ for all model $X'$. When it is the case, we write $\alpha\ge0$.
Beware that in general a Cartier class which is determined by a psef class is not psef (as a $b$-numerical class).

We conclude this section by exploring duality statements between the BPF and the psef cone in the space of $b$-numerical classes.

\begin{prop}\label{prop:dual-easy}
Pick any $b$-numerical class $\alpha\in\wnum^k(\X)$. 
If $\alpha$ is psef (resp. BPF), then for any BPF (resp. psef) Cartier $b$-numerical class $\beta$
we have 
$(\alpha\cdot\beta)\ge0$.
\end{prop}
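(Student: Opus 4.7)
The plan is to reduce both implications to a single-model positivity statement: for any smooth projective variety $Y$ of dimension $d$, a class $\gamma\in\BPF^k(Y)$ and a class $\delta\in\psef^{d-k}(Y)$ satisfy $(\gamma\cdot\delta)\ge 0$. This follows from the very definition of strongly BPF classes as $p_*(H_1\cdot\ldots\cdot H_{k+e})$ for a flat proper morphism $p$ and ample divisors $H_j$: the projection formula gives $(p_*(H_1\cdot\ldots\cdot H_{k+e})\cdot\delta)=(H_1\cdot\ldots\cdot H_{k+e}\cdot p^*\delta)$, flat pullback preserves pseudo-effectivity of cycles, and the intersection of ample classes with a psef cycle is non-negative; taking closures gives the stated inequality for all BPF classes.

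For the first implication, assume $\alpha\in\wnum^k(\X)$ is psef and $\beta\in\cBPF^{d-k}(\X)$. Let $X_0$ be a smooth model in which $\beta$ is determined, so that $\beta_{X_0}\in\BPF^{d-k}(X_0)$. By construction of the pairing $\wnum^k(\X)\times\cnum^{d-k}(\X)\to\R$ recalled in \S\ref{section_bclass}, we have $(\alpha\cdot\beta)=(\alpha_{X_0}\cdot\beta_{X_0})$, and the hypothesis that $\alpha$ is psef gives $\alpha_{X_0}\in\psef^k(X_0)$. The single-model fact then yields $(\alpha\cdot\beta)\ge0$.

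For the second implication, assume $\alpha\in\BPF^k(\X)$ and that $\beta\in\cnum^{d-k}(\X)$ is psef \emph{as a $b$-class}. Pick a net $\alpha_i\in\cBPF^k(\X)$ converging weakly to $\alpha$, and fix a model $X_0$ in which $\beta$ is determined. For each $i$, choose a smooth model $X_i\ge X_0$ that also dominates the determining model of $\alpha_i$; then $\alpha_{i,X_i}\in\BPF^k(X_i)$ because BPF is stable under pullback by proper morphisms, and $\beta_{X_i}\in\psef^{d-k}(X_i)$ by the assumption that $\beta$ is psef in \emph{every} model. The single-model fact yields $(\alpha_i\cdot\beta)=(\alpha_{i,X_i}\cdot\beta_{X_i})\ge0$. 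Since $\beta$ is Cartier, the function $\eta\mapsto(\eta\cdot\beta)$ is weakly continuous on $\wnum^k(\X)$, so passing to the limit gives $(\alpha\cdot\beta)=\lim_i(\alpha_i\cdot\beta)\ge0$.

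The only subtle point—and what I would call the main obstacle—is to resist identifying "$\beta$ is psef as a $b$-class" with "$\beta_{X_0}$ is psef in the determining model". The paper explicitly warns that in codimension $\ge 2$ a Cartier $b$-class determined by a psef class in one model may fail to be psef in some blow-up. Without the stronger global hypothesis, $\beta_{X_i}$ might not be psef and the argument in the second case would break down. Everything else is essentially bookkeeping built on the single-model positivity statement.
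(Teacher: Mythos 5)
Your proof is correct and follows essentially the same route as the paper: reduce to the single-model fact that BPF classes pair non-negatively with psef classes, use that a psef $b$-class is psef in \emph{every} model (the subtlety you rightly flag), and conclude the BPF case by density of Cartier BPF classes together with weak continuity of pairing against a Cartier class. The only difference is that you derive the single-model positivity explicitly from the definition of strongly BPF classes via the projection formula, where the paper simply invokes the known fact that BPF classes are nef.
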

\begin{proof}
Suppose $\alpha$ is psef and pick any BPF Cartier $b$-numerical class $\beta$ determined in a 
model $X'$. Then $(\alpha\cdot\beta)= (\alpha_{X'} \cdot\beta_{X'})\ge 0$.

Pick now any psef Cartier $b$-numerical class $\beta$  determined in a 
model $X'$. If $\alpha$ is a Cartier BPF class determined in a model $X''$
then $(\alpha_{X'} \cdot\beta_{X'})=(\alpha\cdot\beta)= (\alpha_{X''} \cdot\beta_{X''})\ge 0$.
We obtain    $(\alpha\cdot\beta)=(\alpha_{X'} \cdot\beta_{X'})\ge0$ by density for \emph{any} nef $b$-numerical classes.
\end{proof}

Recall from the introduction that we stated a duality  between the cones of BPF $b$-classes and psef $b$-classes (Conjecture~\ref{conj:dual-BPF}). 
We prove it here for curve and divisor classes.

\begin{prop}\label{prop:dual-BPF}
For $k=1$ and $k=d-1$, the following two statements hold.
\begin{itemize}
\item
A $b$-numerical class $\alpha\in\wnum^k(\X)$ is psef
iff for any  $b$-numerical class $\beta\in\cBPF^{d-k}(\X)$ 
 we have $(\alpha\cdot\beta)\ge0$.
\item
A $b$-numerical class $\alpha\in\wnum^k(\X)$ belongs to $\BPF^k(\X)$ iff for any psef Cartier $b$-numerical class $\beta\in\cnum^{d-k}(\X)$,  we have $(\alpha\cdot\beta)\ge0$.
\end{itemize}
\end{prop}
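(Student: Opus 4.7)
The forward implications in both bullets are given by Proposition~\ref{prop:dual-easy}, so only the four converses require argument. The common underlying computation is the identity $(\alpha\cdot[\delta])=(\alpha_{X_0}\cdot\delta)$ valid for any Cartier $b$-class $\beta=[\delta]$ determined in a model $X_0$ and any Weil $b$-class $\alpha$ of complementary codimension; by pushforward compatibility this intersection number can in fact be evaluated as $(\alpha_{X'}\cdot\beta_{X'})$ in any model $X'$.

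For the first bullet I verify $\alpha_{X'}\in\psef^k(X')$ in every model. When $k=d-1$, any nef divisor $\gamma\in\nef^1(X')$ determines a class $[\gamma]\in\cNef(\X)=\cBPF^1(\X)$, so the hypothesis yields $(\alpha_{X'}\cdot\gamma)=(\alpha\cdot[\gamma])\ge 0$, and Kleiman duality forces $\alpha_{X'}\in\psef^{d-1}(X')$. When $k=1$, I invoke the BDPP duality \cite[Corollary~2.5]{boucksom_demailly_paun_peternell} identifying $\psef^1(X')$ with the dual cone of movable curves; by Fulger--Lehmann's \cite[Proposition~3.13]{fulger_lehmann_zariski} (recalled just before Theorem~\ref{thm:interpret-BPF}), the movable cone $\Mov^{d-1}(X')$ is the closure of $\sum_\pi\pi_*\BPF^{d-1}(X_\pi)$ as $\pi\colon X_\pi\to X'$ ranges over birational models, and any term $\pi_*\delta$ with $\delta\in\BPF^{d-1}(X_\pi)$ is the incarnation in $X'$ of the Cartier class $[\delta]\in\cBPF^{d-1}(\X)$. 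The hypothesis thus gives $(\alpha_{X'}\cdot\pi_*\delta)=(\alpha\cdot[\delta])\ge 0$, and passing to non-negative linear combinations and weak limits yields $(\alpha_{X'}\cdot\eta)\ge 0$ for every $\eta\in\Mov^{d-1}(X')$.

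For the second bullet I begin by observing that in codimensions $1$ and $d-1$, a Cartier $b$-class determined by a pseudo-effective class in some model is pseudo-effective as a $b$-class. For codimension $1$ this is already noted in \S\ref{section_bclass}; for codimension $d-1$ it amounts to showing that both pushforward and pullback preserve $\psef^{d-1}$ under a birational morphism $\pi\colon Y\to X$ between smooth projective varieties. Pushforward follows from effectivity of image cycles; for pullback, given $\gamma\in\psef^{d-1}(X)$ and $D\in\nef^1(Y)$ the projection formula combined with the classical nefness of $\pi_*D$ yields $(\pi^*\gamma\cdot D)=(\gamma\cdot\pi_*D)\ge 0$, whence $\pi^*\gamma\in\psef^{d-1}(Y)$ by Kleiman duality. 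With this in hand, when $k=d-1$ the hypothesis applied to $[\delta]$ for $\delta\in\psef^1(X')$ gives $(\alpha_{X'}\cdot\delta)\ge 0$ for every psef divisor in every model, so $\alpha_{X'}\in\Mov^{d-1}(X')$ by BDPP, and the second half of Theorem~\ref{thm:interpret-BPF} concludes $\alpha\in\BPF^{d-1}(\X)$. When $k=1$, the parallel argument gives $\alpha_{X'}\in\nef^1(X')$ for every model $X'$; to upgrade this to $\alpha\in\Nef(\X)$ I consider the net of Cartier nef $b$-classes $\{[\alpha_{X'}]\}_{X'\in\cM_X}$ and observe that pushforward compatibility forces $[\alpha_{X'}]_{X''}=\alpha_{X''}$ whenever $X'\ge X''$, so the net converges weakly to $\alpha$.

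The main technical point, required only for the second bullet, is the preservation of pseudo-effectivity of curve classes under birational pullbacks; once this is in place, the four sub-statements reduce to routine invocations of Kleiman's and BDPP's duality theorems together with Theorem~\ref{thm:interpret-BPF}.
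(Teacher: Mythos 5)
Three of the four converse implications in your plan (psefness of $\alpha\in\wnum^1$ tested against $\cBPF^{d-1}$, psefness of $\alpha\in\wnum^{d-1}$ tested against $\cNef$, and BPF-ness of $\alpha\in\wnum^{d-1}$ tested against psef Cartier $b$-divisors) are correct and coincide with the paper's argument: in each case the test classes, incarnated in a fixed model $X'$, sweep out a cone whose dual is known by Kleiman or BDPP duality, and Theorem~\ref{thm:interpret-BPF} finishes the codimension $d-1$ cases. The problem is the fourth implication, the one you dismiss as a ``parallel argument'': showing that $\alpha\in\wnum^1(\X)$ with $(\alpha\cdot\beta)\ge0$ for all \emph{psef} Cartier curve $b$-classes $\beta$ is nef.

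Your preliminary observation that a Cartier $b$-class of codimension $d-1$ determined by a pseudo-effective class is pseudo-effective as a $b$-class is false, and the paper explicitly warns against it (``Beware that in general a Cartier class which is determined by a psef class is not psef''). The specific false step is the ``classical nefness of $\pi_*D$'': for a birational morphism $\pi\colon Y\to X$ with $\dim X\ge 3$, the pushforward of a nef divisor is only \emph{movable}, not nef. For instance, if $X\dashrightarrow X^+$ is a flop with flopping curve $C$, $Y$ a common resolution with $p\colon Y\to X$, $q\colon Y\to X^+$, and $H^+$ ample on $X^+$, then $D=q^*H^+$ is nef but $p_*D$ is the strict transform of $H^+$, which is negative on $C$; consequently $(p^*[C]\cdot D)=([C]\cdot p_*D)<0$, so the pullback of the psef curve class $[C]$ is \emph{not} psef. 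Hence for $\delta\in\psef^{d-1}(X')$ the Cartier class $[\delta]$ is generally not a psef $b$-class, your hypothesis cannot be applied to it, and you cannot conclude $(\alpha_{X'}\cdot\delta)\ge0$ for all psef curve classes $\delta$ in each model. This implication is genuinely the hard one (the paper attributes it to Lehmann): the paper proves it by a Hahn--Banach separation argument in $\wnum^1(\X)$ combined with Lemma~\ref{lem:continuous-dual-wN1}, which identifies every weakly continuous linear functional on $\wnum^1(\X)$ with intersection against a \emph{Cartier} curve $b$-class; the separating functional is then a Cartier curve class that is nonnegative against $\Nef(\X)$, hence psef in every model, hence psef as a $b$-class, contradicting the hypothesis. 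Some substitute for this duality/representation step is unavoidable; your reduction to pullback-invariance of $\psef^{d-1}$ cannot be repaired because the invariance itself fails.
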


\begin{rem}
The fact that a $b$-divisor class is nef if it intersects non-negatively psef $b$-numerical curve classes
is due to B.~Lehmann who formulated this fact in terms of movable classes, see~\cite[Theorem~1.5]{movable-class}.
\end{rem}

\begin{proof}
In view of Proposition~\ref{prop:dual-easy}, we need to prove four implications.

Suppose first that a class $\alpha \in\wnum^1(\X)$ intersects non-negatively
all $b$-numerical classes $\beta\in\cBPF^{d-1}(\X)$. By Theorem~\ref{thm:interpret-BPF}, for any model $X'$ the class
$\alpha_{X'}$ lies in the dual cone of movable curve classes, hence it is psef. This proves $\alpha\ge0$.

\smallskip

Suppose next that $\alpha \in\wnum^{d-1}(\X)$ intersects non-negatively all nef Cartier $b$-divisor classes. 
Since the nef cone $\Nef(X')$ is dual to the psef cone in $\num^{d-1}(X')$, the class $\alpha_{X'}$ is effective 
for all $X'$ hence $\alpha \ge0$.

\smallskip

Take now a class $\alpha \in\wnum^{d-1}(\X)$ intersecting non-negatively all psef $b$-divisor classes. 
Pick any model $X'$ and any effective divisor class $\gamma\in \num^1(X')$.
Since the pull-back of an effective divisor remains effective, the Cartier class $[\gamma]$ is psef, 
hence $(\alpha_{X'}\cdot\gamma)= (\alpha\cdot[\gamma]) \ge0$. 
Since the psef cone is dual to the movable cone in $\num^{d-1}(X')$, we conclude that 
$\alpha_{X'}$ is movable for all $X'$, hence $\alpha$ is BPF by Theorem~\ref{thm:interpret-BPF}.

\smallskip

It remains to prove that a $b$-divisor class is nef iff it intersects non-negatively 
all psef Cartier $b$-numerical curve classes. 
Denote by $\mathcal{C}\subset\wnum^1(\X)$ the dual of the cone of psef Cartier $b$-numerical curve classes.
It is weakly closed and contains $\Nef(\X)$.
Suppose there exists a class $\alpha\in \mathcal{C}$ which is not nef. By Hahn-Banach's theorem
there exists a linear form $\ell\in\wnum^1(\X)^*$ which is continuous for the weak topology and such that 
$\ell(\alpha) <0$ and $\ell|_{\Nef(\X)} \ge 0$. 

\begin{lem}\label{lem:continuous-dual-wN1}
For any weakly continuous linear form $\ell\in\wnum^1(\X)^*$, there exists a Cartier $b$-numerical curve class $\beta$ such that 
$\ell(\alpha)=(\alpha\cdot\beta)$ for all $\alpha\in\wnum^1(\X)$.
\end{lem}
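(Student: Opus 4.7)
The strategy is to exploit the structure of $\wnum^1(\X)$ as a projective limit in order to show that any weakly continuous linear form $\ell$ factors through the evaluation map $\alpha \mapsto \alpha_{X_0}$ for some model $X_0$, and then to invoke finite-dimensional Poincaré duality.

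First I would unwind the weak topology: a basic open neighborhood of $0$ in $\wnum^1(\X)$ is by definition of the form $\bigcap_{i=1}^n \{\alpha : \alpha_{X_i} \in U_i\}$ for finitely many models $X_1,\ldots, X_n$ and open neighborhoods $U_i$ of $0$ in $\num^1(X_i)$. Since the poset $\cM_X$ is inductive, one can find a model $X_0$ dominating all $X_i$; the push-forward compatibility condition $\pi_* \alpha_{X_0} = \alpha_{X_i}$ makes each map $\alpha \mapsto \alpha_{X_i}$ factor continuously through $\alpha \mapsto \alpha_{X_0}$, so the neighborhood above contains one of the form $\{\alpha : \alpha_{X_0} \in U\}$ with $U$ a suitable neighborhood of $0$ in $\num^1(X_0)$. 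By continuity of $\ell$, applied to a bounded neighborhood of $0$ in $\R$, I obtain a model $X_0$ and a neighborhood $U$ of $0$ in $\num^1(X_0)$ with $|\ell(\alpha)| < 1$ whenever $\alpha_{X_0} \in U$.

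Second, by standard rescaling ($t\alpha$ for $t \to \infty$), this estimate forces $\ell$ to vanish on the kernel of the evaluation map $\mathrm{ev}_{X_0}\colon \wnum^1(\X) \to \num^1(X_0)$, $\alpha\mapsto \alpha_{X_0}$. Moreover $\mathrm{ev}_{X_0}$ is surjective: for any $\gamma \in \num^1(X_0)$ the Cartier $b$-divisor class $[\gamma]$ satisfies $[\gamma]_{X_0}=\gamma$. Hence $\ell$ descends to a linear form $\bar\ell$ on the finite-dimensional space $\num^1(X_0)$.

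Third, by the perfect pairing $\num^1(X_0) \times \num^{d-1}(X_0) \to \R$ recalled in \S\ref{sec:numerical cycles}, there exists a unique $\beta_0 \in \num^{d-1}(X_0)$ with $\bar\ell(\gamma) = (\gamma \cdot \beta_0)$ for all $\gamma \in \num^1(X_0)$. Let $\beta := [\beta_0] \in \cnum^{d-1}(\X)$ be the Cartier $b$-numerical curve class determined by $\beta_0$ in $X_0$. By the very definition of the intersection pairing $\wnum^1(\X) \times \cnum^{d-1}(\X) \to \R$ in \S\ref{section_bclass}, one has $(\alpha \cdot \beta) = (\alpha_{X_0}\cdot \beta_0) = \bar\ell(\alpha_{X_0}) = \ell(\alpha)$ for every $\alpha \in \wnum^1(\X)$, which is the claimed identity.

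The only slightly subtle step is the first one, namely reducing a basic open neighborhood in the product topology to one controlled by a \emph{single} model; everything else is formal. Once that reduction is made, the argument collapses to finite-dimensional linear algebra together with the surjectivity of evaluation at $X_0$, both of which are immediate.
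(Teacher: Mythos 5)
Your proof is correct, but it takes a genuinely different route from the paper's. You unwind the product topology: a continuity estimate $|\ell|<1$ on a basic neighborhood of $0$ involves only finitely many models, which by inductivity of $\cM_X$ are dominated by a single $X_0$; the compatibility $\mathrm{ev}_{X_i}=\pi_*\circ\mathrm{ev}_{X_0}$ then gives $|\ell|<1$ on $\mathrm{ev}_{X_0}^{-1}(U)\supset\ker(\mathrm{ev}_{X_0})$, and rescaling kills $\ell$ on that kernel, so $\ell$ factors through the surjection $\mathrm{ev}_{X_0}$ (surjective because $\gamma\mapsto[\gamma]$ is a linear section) and finite-dimensional Poincar\'e duality finishes the argument. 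This is essentially the standard description of the continuous dual of a projective limit whose projections admit linear sections, and all the ingredients you use (product topology, inductivity of the poset of models, perfect pairing $\num^1(X_0)\times\num^{d-1}(X_0)\to\R$, projection formula) are available in the paper. The paper proceeds differently: it first defines $\beta_{X'}\in\num^{d-1}(X')$ in \emph{every} model by duality applied to $\ell$ on Cartier classes determined in $X'$, checks the push-forward compatibility $\pi_*\beta_{X''}=\beta_{X'}$ to obtain a Weil curve class, and then proves Cartierness by contradiction: assuming the class is not Cartier, it uses Chow's lemma and the fact that failure of $\gamma=\pi^*\pi_*\gamma$ is detected on exceptional divisors to build a weakly convergent sequence of Cartier $b$-divisor classes $[\alpha_n]$ with $\ell([\alpha_n])\to\infty$, contradicting weak continuity. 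Your argument is shorter and identifies the determination model $X_0$ up front directly from the continuity estimate; the paper's argument is longer but makes explicit \emph{where} Cartierness could fail (pairing against exceptional divisors of further blow-ups), information it does not actually need for the lemma. Both proofs are valid.
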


Denote by $\beta$ the Cartier $b$-numerical curve class given by the lemma. Then we have
$(\alpha\cdot\beta) <0$ and $(\theta\cdot \beta)\ge 0$ for all $\theta \in \Nef(\X)$.
By duality, we infer $\beta_{X'}\ge0$ for all model $X'$ which proves $\beta\ge0$. This gives a contradiction and concludes the proof.
\end{proof}

\begin{proof}[Proof of Lemma~\ref{lem:continuous-dual-wN1}]
By Poincaré duality, for any smooth model $X'$,  there exists a unique curve class $\beta_{X'}\in\num^{d-1}(X')$ such that 
$\ell([\alpha])=(\alpha\cdot[\beta_{X'}])$ for all $\alpha\in\num^1(X')$. If $\pi\colon X''\to X'$ is a proper birational morphism, 
and $\alpha$ is a class in $\num^1(X')$ then $[\pi^*\alpha] =[\alpha]$ which implies $\pi_*(\beta_{X''})= \beta_{X'}$.
We now argue that the class $\beta = (\beta_{X'})_{X'\in\cM}$ is Cartier. 

Suppose by contradiction that it is not. Then for any model $X'$, there exists a proper birational morphism  $\pi\colon X''\to X'$ such that 
$\beta_{X''} \neq \pi^* \beta_{X'}$. Using Chow's lemma, we may build a sequence of birational morphisms $\pi_n \colon X_{n+1}\to X_n$
with $X = X_0$
such that $\pi_n$ is a composition of blow-ups along smooth centers, and $\beta_{X_{n+1}} \neq \pi_n^* \beta_{X_n}$.
Now observe that since $\dim \num^{d-1}(X_{n+1}) - \dim \num^{d-1}(X_{n})$
is equal to the number of $\pi_n$-exceptional irreducible divisors, 
for any class $\gamma\in \num^{d-1}(X_{n+1})$
 we have $\gamma = \pi_n^*(\pi_n)_* \gamma$ iff $(\gamma \cdot E)=0$ for any $\pi_n$-exceptional divisor (see also~\cite[Theorem~1.2]{MR3390043}. 
 We may thus find a $\pi_n$-exceptional divisor $E_n$ such that $(\beta_{X_{n+1}}\cdot E_n) \neq 0$. 
  
We may thus construct by induction a sequence of classes $\alpha_n\in \num^1(X_n)$, by setting $\alpha_0 = 0$ and 
$\alpha_{n+1} := \pi_n^*\alpha_n+ \frac1{(\beta_{X_{n+1}}\cdot E_n)} E_n$ so that $(\beta_{X_{n+1}}\cdot\alpha_{n+1}) = (\beta_{X_{n}}\cdot\alpha_{n})+1$. 

We claim that the sequence of Cartier $b$-divisor classes $[\alpha_n]$ is weakly converging to a class in $\alpha \in \wnum^1(\X)$.
We briefly justify this fact. 

Pick any smooth model $\pi\colon X'\to X$. Let $\cV_{X'}$ be the set of $\pi$-exceptional divisors. 
This set can be canonically identified with the set of divisorial valuations on the fraction field 
$K(X)$ having a codimension $1$ center in $X'$, and a codimension at least $2$ center in $X$. 
Any class $\gamma \in \num^1(X')$ can be decomposed in a unique way as follows: 
$\gamma = \pi^* \gamma_X + \sum_{E\in\cV_{X'}} \ord_E(\gamma) [E]$
where $\gamma_X \in \num^1(X)$ and $a_E \in \R$. 

Since $(\alpha_n)_X=0$ for all $n$, we have the representation $(\alpha_n)_{X'} = \sum_{E\in\mathcal{V}_{X'}}  \ord_E(\alpha_n) [E]$.
The key observation is that $(\cup_n \cV_{X_n})\cap \cV_{X'}$ is finite. 
Since $\cV_{X_n}$ is increasing, for $n_0$ large enough,  $(\cup_n \cV_{X_n})\cap \cV_{X'}= \cV_{X_{n_0}}\cap \cV_{X'}$
therefore $(\alpha_n)_{X'} =(\alpha_{n_0})_{X'}$ for all $n\ge n_0$ which
proves the claim.

\smallskip

Finally by weak continuity of $\ell$,  we have 
\[\ell(\alpha) = \lim_n\ell([\alpha_n])= (\beta_{X_{n}}\cdot\alpha_{n}) \to \infty\] 
which is absurd.
\end{proof}


\subsection{Lehmann-Xiao's decompositions of curve classes}\label{sec:LX}

Our objective is to relate our Theorem~\ref{thm:MA} to the results of Lehmann and Xiao~\cite{MATH06656980}.
Let us review briefly their construction. 

Let $\alpha$ be any big class in $\num^{d-1}(X')$ for some model $X'$ so that $\alpha \ge c \om^{d-1}$ for some $c>0$.
Then \cite[Theorem 1.3]{MATH06656980} states the existence of a unique class $\LX(\alpha)\in\Nef(X')$ such that
\[
\alpha \ge \LX(\alpha)^{d-1} \text{ and } \LX(\alpha) \cdot \left(\alpha - \LX(\alpha)^{d-1}\right) =0~.
\]

Our next result relies on the  extension to $b$-classes of the functional $\hvol$ introduced in~\cite{zbMATH06798253}.
Recall that for any pseudo-effective class $\alpha\in\num^{d-1}(X')$, Xiao defined the following quantity:
\begin{equation}\label{def:hat-vol-model}
\hvol_{X'}(\alpha) :=\left(\inf_{\substack{\beta \in\Nef(X')\\ (\beta^d)>0}} \, \frac{(\alpha\cdot \beta)}{(\beta^d)^{1/d}}\right)^{d/d-1}\in \R_+~.
\end{equation}
We extend this functional to any pseudo-effective class $\alpha\in\Vect(\BPF^{d-1}(\X))$ by setting
\begin{equation}\label{def:hat-vol}
\hvol(\alpha) :=\left(\inf_{\substack{\beta \in\Nef(\X)\\ (\beta^d)>0}} \, \frac{(\alpha\cdot \beta)}{(\beta^d)^{1/d}}\right)^{d/d-1}\in \R_+~.
\end{equation}

\begin{thm}\label{thm:relate-to-LX}
For any big curve class $\alpha\in\BPF^{d-1}(\X)$, 
the family of Cartier $b$-divisor classes $\left[\LX(\alpha_{X'})\right]$
converges weakly to $\beta$ which is the unique nef $b$-divisor class satisfying
$\alpha = (\beta^{d-1})$.

 In other words, for any $\gamma \in\cnum^{d-1}(\X)$ and for any $\epsilon>0$, there exists a model $X'$ such that for any
model $X''\ge X'$ we have
\[
\left|
\left(\left[\LX(\alpha_{X''})\right] \cdot \gamma\right)- (\beta\cdot\gamma)
\right|
\le \epsilon
~.\]
\end{thm}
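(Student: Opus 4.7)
The plan is to exploit Theorem~\ref{thm:MA} to identify $\beta\in\Nef(\X)$ as the unique $(d-1)$-st root of $\alpha$, and then to show that every weak cluster point of $\{[\LX(\alpha_{X'})]\}_{X'\in\cM_X}$ equals $\beta$, via a combination of asymptotic volume estimates, the Diskant inequality in each model, and the equality case of Khovanskii--Teissier for $b$-divisors.

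First I would verify the asymptotic identity $(\LX(\alpha_{X'})^d)\to V:=(\beta^d)$. A direct Khovanskii--Teissier computation (Proposition~\ref{prop:KT}) using $\beta^{d-1}=\alpha$ gives $\hvol(\alpha)=(\beta^d)$. The function $X'\mapsto\hvol_{X'}(\alpha_{X'})$ is non-increasing on $\cM_X$, since pulling back a nef class preserves both its volume and its pairing with $\alpha_{X'}$; approximating $\beta$ from above by Cartier nef classes (Theorem~\ref{thm:approx-nef}) shows the infimum of $\hvol_{X'}$ equals $\hvol(\alpha)$, so $\hvol_{X'}(\alpha_{X'})\downarrow V$. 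The Lehmann--Xiao variational identity $\hvol_{X'}(\alpha_{X'})=(\LX(\alpha_{X'})^d)$ then gives the claim.

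The core of the argument is a Diskant-based bound. Fix $\epsilon>0$ and pick a Cartier nef class $\beta_\epsilon\in\cNef(\X)$ with $(1-\epsilon)\beta_\epsilon\le\beta\le\beta_\epsilon$ via Corollary~\ref{cor_best_approx}; taking $(d-1)$-st powers yields $\alpha\le\beta_\epsilon^{d-1}\le(1-\epsilon)^{-(d-1)}\alpha$. Intersecting with $\LX(\alpha_{X'})$ in the model $X'$ and using the orthogonality $(\alpha_{X'}\cdot\LX(\alpha_{X'}))=(\LX(\alpha_{X'})^d)$ of the Lehmann--Xiao decomposition gives
\[
(\LX(\alpha_{X'})^d)\le\bigl(\beta_{\epsilon,X'}^{d-1}\cdot\LX(\alpha_{X'})\bigr)\le(1-\epsilon)^{-(d-1)}(\LX(\alpha_{X'})^d),
\]
while the maximality of $\LX(\alpha_{X'})$ among nef classes $\gamma\in\num^1(X')$ with $\gamma^{d-1}\le\alpha_{X'}$ (a direct Khovanskii--Teissier consequence) together with the analogous minimality among nef $\gamma$ with $\gamma^{d-1}\ge\alpha_{X'}$ yields the volume sandwich $(1-\epsilon)^d(\beta_\epsilon^d)\le(\LX(\alpha_{X'})^d)\le(\beta_\epsilon^d)$. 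Plugging these estimates into the Diskant inequality of \S\ref{sec:Diskant} applied to the big nef pair $(\beta_{\epsilon,X'},\LX(\alpha_{X'}))$ in $\num^1(X')$, the slope $s_{X'}(\epsilon):=\sup\{s:\beta_{\epsilon,X'}-s\LX(\alpha_{X'})\ge 0\}$ satisfies $s_{X'}(\epsilon)\ge 1-c(\epsilon)$ for all $X'$ sufficiently large, with $c(\epsilon)\to 0$ as $\epsilon\to 0$. Promoting this divisor inequality to a $b$-class inequality (using Cartierness of $\beta_\epsilon$ and compatibility of pullback and push-forward) gives $[\LX(\alpha_{X'})]\le(1-c(\epsilon))^{-1}\beta_\epsilon$ eventually, which simultaneously bounds the net weakly and forces any weak cluster point $\tilde\beta$ to satisfy $\tilde\beta\le(1-c(\epsilon))^{-1}\beta_\epsilon$.

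Letting $\epsilon\to 0$ gives $\tilde\beta\le\beta$. Upper-semicontinuity of the self-intersection (Theorem~\ref{thm:intersection}(2)) combined with the volume convergence of the first step gives $(\tilde\beta^d)\ge V$, while $\tilde\beta\le\beta$ and monotonicity of the intersection give $(\tilde\beta^d)\le(\beta^d)=V$ and $(\tilde\beta^{d-1}\cdot\beta)\le(\beta^d)=V$. Khovanskii--Teissier combined with $(\tilde\beta^d)=V$ then forces $(\tilde\beta^{d-1}\cdot\beta)=V$, so the log-concave sequence $e_k:=(\tilde\beta^{d-k}\cdot\beta^k)$ satisfies $e_0=e_1=e_d=V$ and is therefore constantly equal to $V$. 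The equality case of Proposition~\ref{prop:KT} gives $\tilde\beta\propto\beta$, and the common volume pins down $\tilde\beta=\beta$. Since every cluster point equals $\beta$, the full net converges weakly to $\beta$. The main obstacle is the extraction of the uniform Diskant estimate $s_{X'}(\epsilon)\ge 1-c(\epsilon)$: one needs to track carefully how the left-hand side of Diskant's inequality tends to $0$ in the iterated limit $X'\to\infty$, $\epsilon\to 0$, and convert this into quantitative information on the slope, all uniformly across the directed set of models.
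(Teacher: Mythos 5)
Your proof is correct and follows essentially the same route as the paper's: identify $\beta$ as the unique minimizer computing $\hvol(\alpha)$ with $\hvol(\alpha)=(\beta^d)$, show $\hvol_{X'}(\alpha_{X'})\downarrow\hvol(\alpha)$ by Cartier approximation of $\beta$, invoke the Lehmann--Xiao identities $(\LX(\alpha_{X'})^d)=\hvol_{X'}(\alpha_{X'})=(\alpha_{X'}\cdot\LX(\alpha_{X'}))$, and conclude with Diskant. The only real difference is in the endgame: the paper applies the $b$-divisor Diskant inequality~\eqref{eq:Diskant} directly to the pair $(\beta,[\LX(\alpha_{X'})])$, whereas you run Diskant at the model level through the auxiliary Cartier approximant $\beta_\epsilon$ and finish with a cluster-point argument using upper semicontinuity of the volume and the equality case of Khovanskii--Teissier -- both versions work, and your minor misstatement about ``minimality among nef $\gamma$ with $\gamma^{d-1}\ge\alpha_{X'}$'' is harmless since the upper half of the volume sandwich already follows from the convergence $(\LX(\alpha_{X'})^d)\to(\beta^d)\le(\beta_\epsilon^d)$ established in your first step.
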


\begin{proof}

Suppose that $\alpha\in\BPF^{d-1}(\X)$ is big. By Theorem~\ref{thm:MA}, 
one can write $\alpha = \gamma^{d-1}$ for some big curve class $\gamma\in\Nef(\X)$. We claim that $\gamma$
is the unique minimizer computing $\hvol(\alpha)$.

Pick any class $\beta\in\Nef(\X)$ such that $(\beta^d)>0$, and normalize it by $(\beta\cdot\om^{d-1}) =+1$.  
By the proof of Theorem~\ref{thm:MA}, the class $\gamma/(\alpha\cdot \gamma)$ is the unique class in 
$\{\beta'\in\Nef(\X),\, (\beta'\cdot\om^{d-1}) =+1\}$ maximizing $(\beta^{'d})$.
It follows that 
\[
(\beta^d)\le \frac{(\gamma^d)}{(\alpha\cdot \gamma)^d}
=(\gamma^d)^{1-d}
\]
hence by Proposition~\ref{prop:KT}, we get 
\[
\frac{(\alpha\cdot \beta)}{(\beta^d)^{1/d}}
=
\frac{(\gamma^{d-1}\cdot \beta)}{(\beta^d)^{1/d}}
\ge
(\gamma^d)^{(d-1)/d}
=
\frac{(\alpha\cdot \gamma)}{(\gamma^d)^{1/d}}
\]
which proves the claim. 

\smallskip

For any model $X'$, we have
\begin{align*}
\hvol_{X'}(\alpha_{X'}) 
=
\left(\inf_{\substack{\beta \in \Nef(X')\\ (\beta^d)>0}} \, \frac{(\alpha_{X'}\cdot \beta)}{(\beta^d)^{1/d}}\right)^{d/d-1}
&=
\left(\inf_{\substack{\beta \in \Nef(X')\\ (\beta^d)>0}} \, \frac{(\alpha\cdot [\beta_{X'}])}{(\beta^d)^{1/d}}\right)^{d/d-1}
\\
&\ge
\left(\inf_{\substack{\beta \in \Nef(\X)\\ (\beta^d)>0}} \, \frac{(\alpha \cdot \beta)}{(\beta^d)^{1/d}}\right)^{d/d-1}
= \hvol(\alpha)
~.\end{align*}
\begin{lem}\label{lem:existence-model}
For any $\epsilon>0$, there exists a model $X'$ such that for any $X''\ge X'$ we have
$\hvol_{X''}(\alpha_{X''}) \le\hvol(\alpha) +\epsilon$.
 \end{lem}
Fix any model $X'$ as in the lemma. Since $\alpha_{X'}$ is big, it follows from~\cite[Theorem 1.3]{MATH06656980}
that $\LX(\alpha_{X'})$ is the unique minimizer for $\hvol_{X'}(\alpha_{X'})$.
From the previous lemma, and the relation 
$(\alpha_{X'}- \LX(\alpha_{X'})^{d-1})\cdot\LX(\alpha_{X'})=0$, we get
\begin{align}
\left|
(\gamma^d)- \left(\LX(\alpha_{X'})^d\right)
\right|
&=
\left|\hvol(\alpha)- \hvol_{X''}(\alpha_{X''})\right|  \le \epsilon
\\
|\left(\gamma^{d-1}\cdot \LX(\alpha_{X'})\right)
- 
\left(\LX(\alpha_{X'})^d\right)|
&=
|\left(\alpha\cdot \LX(\alpha_{X'})\right)
- 
\left(\LX(\alpha_{X'})^d\right)|
=0~. \label{eq;6665}
\end{align}
We conclude using~\eqref{eq:Diskant}.
\end{proof}

\begin{proof}[Proof of Lemma~\ref{lem:existence-model}]
Pick any sequence $\gamma_n\in\cNef(\X)$ decreasing to $\gamma$.
By Theorem~\ref{thm:intersection} (3), we have
\[
\lim_{n\to\infty} \left( \frac{(\alpha\cdot \gamma_n)}{(\gamma_n^d)^{1/d}}\right)^{d/d-1} = \hvol(\alpha)
~.\]
Choose $n$ large enough such that 
\[
\left( \frac{(\alpha\cdot \gamma_n)}{(\gamma_n^d)^{1/d}}\right)^{d/d-1} \le  \hvol(\alpha) + \epsilon
~.\]
Then in any model $X''$ dominating a model in which $\gamma_n$ is determined, we get 
$\hvol_{X''}(\alpha_{X''})\le  \hvol(\alpha) + \epsilon$.
\end{proof}

\begin{rem}
Suppose that $\alpha_n \to \alpha \ge \om^{d-1}$. Then we can write $\alpha_n =\gamma_n^{d-1}$ and $\alpha=\gamma^{d-1}$ for some $\gamma_n , \gamma\in\Nef(\X)$. If 
$\hvol(\gamma_n)\to\hvol(\gamma)$, then it is possible to show that $\gamma_n \to \gamma$.
It would be interesting to find more general criteria ensuring the convergence  $\gamma_n \to \gamma$.
\end{rem}

We conclude this section by relating the big curve b-classes with the positivity of the functional $\hvol$. The following result is a natural generalization of \cite[Theorem 5.2]{MATH06656980} to b-classes.
A similar statement holds in codimension $1$, which is a direct application of Siu's inequalities.

\begin{thm}\label{thm:char-of-bigness}
Pick any curve $b$-numerical class $\alpha\in\BPF^{d-1}(\X)$, and let $\om$ be any big nef Cartier $b$-divisor class.
Then the following are equivalent:
\begin{enumerate}
\item
$\hvol(\alpha) >0$; 
\item
$\alpha$ is big.
\end{enumerate}
\end{thm}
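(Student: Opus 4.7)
My plan is to prove the two implications separately; the substantive work lies in (1)$\Rightarrow$(2), where the main obstacle is to extract a single positive constant that controls the bigness of $\alpha_{X'}$ uniformly across \emph{all} birational models.

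\emph{Easy direction (2)$\Rightarrow$(1).} Assume $\alpha\ge c\om^{d-1}$ with $c>0$. For any $\beta\in\Nef(\X)$ with $(\beta^d)>0$, the log-concavity of Proposition~\ref{prop:KT} applied to $(\om,\beta)$ yields the endpoint Khovanskii--Teissier inequality $(\om^{d-1}\cdot\beta)\ge(\om^d)^{(d-1)/d}(\beta^d)^{1/d}$. Combining with $(\alpha\cdot\beta)\ge c(\om^{d-1}\cdot\beta)$, dividing by $(\beta^d)^{1/d}$ and taking the infimum gives $\hvol(\alpha)\ge c^{d/(d-1)}(\om^d)>0$.

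\emph{Hard direction (1)$\Rightarrow$(2).} Fix any smooth birational model $\pi\colon X'\to X$. Since each $\beta\in\Nef(X')$ induces a Cartier b-class $[\beta]\in\cNef(\X)\subset\Nef(\X)$ with $([\beta]^d)=(\beta^d)$, the infimum defining $\hvol_{X'}(\alpha_{X'})$ is taken over a subset of that defining $\hvol(\alpha)$, so $\hvol_{X'}(\alpha_{X'})\ge\hvol(\alpha)>0$. By~\cite[Theorem~5.29]{MATH06656980}, $\alpha_{X'}$ is then big in $\num^{d-1}(X')$, and~\cite[Theorem~1.3]{MATH06656980} provides the Lehmann--Xiao envelope $\LX(\alpha_{X'})\in\Nef(X')$ with $\LX(\alpha_{X'})^{d-1}\le\alpha_{X'}$ and $(\LX(\alpha_{X'})^d)=\hvol_{X'}(\alpha_{X'})\ge\hvol(\alpha)$.

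The key step is to obtain a lower bound $\LX(\alpha_{X'})\ge c_0\om_{X'}$ with a constant $c_0>0$ independent of $X'$. I would apply Diskant's inequality~\eqref{eq:Diskant} to the pair of big nef Cartier divisors $(\LX(\alpha_{X'}),\om_{X'})$ in $X'$. Writing $A=(\LX(\alpha_{X'})^{d-1}\cdot\om_{X'})$, $B=(\LX(\alpha_{X'})^d)$ and $C=(\om_{X'}^d)=(\om_X^d)$, Khovanskii--Teissier forces $y:=BC^{1/(d-1)}/A^{d/(d-1)}\in[0,1]$; combining the explicit Diskant formula for the largest $s>0$ with $\LX(\alpha_{X'})\ge s\om_{X'}$ with the elementary concavity bound $1-(1-y)^{1/d}\ge y/d$ yields
\[
s\ \ge\ \frac{B}{d\,A}\ \ge\ \frac{\hvol(\alpha)}{d\,(\alpha\cdot\om)}\ =:\ c_0\ >\ 0,
\]
where the second inequality uses $A\le(\alpha_{X'}\cdot\om_{X'})=(\alpha\cdot\om)$ (since $\om$ is Cartier and $\LX(\alpha_{X'})^{d-1}\le\alpha_{X'}$) together with $B\ge\hvol(\alpha)$. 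Crucially, $c_0$ depends only on $\alpha$ and $\om$, not on the model. Raising the inequality to the $(d-1)$-th power and invoking $\LX(\alpha_{X'})^{d-1}\le\alpha_{X'}$ gives $\alpha_{X'}\ge c_0^{d-1}\om_{X'}^{d-1}$ in every model, i.e.\ $\alpha\ge c_0^{d-1}\om^{d-1}$ in $\wnum^{d-1}(\X)$, whence $\alpha$ is big.
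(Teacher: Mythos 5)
Your proof is correct, but the hard direction $(1)\Rightarrow(2)$ follows a genuinely different route from the paper's. The paper perturbs to $\alpha_\epsilon=\alpha+\epsilon\om^{d-1}$ (so that each incarnation is automatically big), invokes Lehmann--Xiao's complete-intersection approximation \cite[Theorem~5.19]{MATH06656980}, which directly yields $[(\alpha_\epsilon)_{X'}]\ge \frac{\hvol((\alpha_\epsilon)_{X'})}{d\,(B\cdot\om^{d-1})}\,\om^{d-1}$ with $B=\LX((\alpha_\epsilon)_{X'})$, and then uses Khovanskii--Teissier plus $B^{d-1}\le(\alpha_\epsilon)_{X'}$ to make the constant model-independent before letting $\epsilon\to0$. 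You instead get model-wise bigness from the Lehmann--Xiao characterization of big curve classes, and then produce the uniform constant on the \emph{divisor} side: Diskant's inequality \eqref{eq:Diskant} for the pair $(\LX(\alpha_{X'}),\om_{X'})$, combined with $1-(1-y)^{1/d}\ge y/d$, gives $\LX(\alpha_{X'})\ge \frac{B}{dA}\,\om_{X'}$ with $B\ge\hvol(\alpha)$ and $A\le(\alpha\cdot\om)$, hence a bound independent of $X'$. This is a clean substitute for Theorem~5.19 using only tools the paper has already established (Diskant for nef classes), and it yields the stronger intermediate fact that the nef parts $\LX(\alpha_{X'})$ are uniformly big, in the spirit of Theorem~\ref{thm:relate-to-LX}; the trade-off is the extra black-box input of the model-wise equivalence $\hvol>0\Leftrightarrow$ big.

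One step deserves a word of justification: passing from $\LX(\alpha_{X'})\ge c_0\om_{X'}$ to $\LX(\alpha_{X'})^{d-1}\ge c_0^{d-1}\om_{X'}^{d-1}$. You cannot expand $(c_0\om_{X'}+E)^{d-1}$ termwise, since powers of a merely pseudo-effective class $E$ need not be pseudo-effective. The correct argument is the telescoping factorization
\[
L^{d-1}-M^{d-1}=(L-M)\cdot\sum_{i=0}^{d-2}L^{i}M^{d-2-i},\qquad L=\LX(\alpha_{X'}),\ M=c_0\om_{X'},
\]
together with the fact that the product of a pseudo-effective divisor class with $d-2$ nef divisor classes is pseudo-effective (reduce to an effective divisor $D$ and note that the restriction of nef classes to $D$ has pseudo-effective top self-intersection). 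With that line added, the argument is complete.
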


\begin{proof}
The implication $(2) \Rightarrow (1)$ is easy since $\hvol$ is increasing. 
Suppose conversely that $\hvol(\alpha)>0$. Pick $\epsilon>0$ and write $\alpha_\epsilon = \alpha+ \epsilon \om^{d-1}$.
We apply~\cite[Theorem~5.19]{MATH06656980} to the big class $(\alpha_\epsilon)_{X'}$ and the movable class $\om^{d-1}$. 
Write $B := \LX([(\alpha_\epsilon)_{X'}])\in\Nef(X')$ so that $(B^d)= \hvol_{X'}((\alpha_\epsilon)_{X'})$.
We obtain
\[
[(\alpha_\epsilon)_{X'}]
\ge
 \frac{\hvol([(\alpha_\epsilon)_{X'}])}{d (B\cdot \om^{d-1})}
\, \om^{d-1}\]
By Khovanskii-Teissier's inequalities, we get $(B\cdot \om^{d-1}) \le \frac{(B^{d-1}\cdot \om)^{d-1}}{(B^d)^{d-2}}$
hence
\[
[(\alpha_\epsilon)_{X'}]
\ge
\om^{d-1} \, \frac{(B^d)^{d-1}}{d (B^{d-1}\cdot \om)^{d-1}}
\mathop{\ge}^{\eqref{eq;6665}}
\om^{d-1} \, \frac{\hvol(\alpha_\epsilon)^{d-1}}{d (\alpha_\epsilon\cdot \om)^{d-1}}
\]
since $B^{d-1}= \LX([(\alpha_\epsilon)_{X'}])^{d-1} \le (\alpha_\epsilon)_{X'}$.
Now  $[(\alpha_\epsilon)_{X'}]$ converges weakly to $\alpha_\epsilon$, 
so that
\[
\alpha_\epsilon
\ge
\om^{d-1} \, \frac{\hvol(\alpha_\epsilon)^{d-1}}{d (\alpha_\epsilon\cdot \om)^{d-1}}
\ge
\om^{d-1} \, \frac{\hvol(\alpha)^{d-1}}{d (\alpha_\epsilon\cdot \om)^{d-1}}~.
\]
Letting $\epsilon\to0$, we get 
$\alpha\ge \om^{d-1} \, \frac{\hvol(\alpha)^{d-1}}{d (\alpha\cdot \om)^{d-1}}$ which concludes the proof.
\end{proof}

\subsection{Positive intersections and $(d-1)$-th root}\label{sec:d-root}
Xiao associated to any movable class $\alpha\in\num^{d-1}(X)$ the following quantity:
\begin{equation}\label{eq:def-mov}
\fM_{X}(\alpha) = \inf_{\substack{\gamma\in \num^1(X)\\ \text{big and movable}}} \left(
\frac{(\gamma\cdot\alpha)}{\vol(\gamma)^{1/d}}
\right)^{d/(d-1)}
~,\end{equation}
where $\vol(\gamma)$ is the standard volume function on the cone of big divisor classes 
(which is defined by $\vol(\gamma):=\langle P(\gamma)^d\rangle$).

Lehmann and Xiao then proved that if $\fM_{X}(\alpha)>0$, then there exists a unique class $\Mv_{X}(\alpha)\in\num^1(X')$ minimizing the right hand side of~\eqref{eq:def-mov}, and that this class 
satisfies $\alpha_X = \langle P([\Mv_{X}(\alpha)])^{d-1}\rangle_{X}$, see~\cite[Theorem~3.14]{lehmann_xiao_positivity}.

\smallskip

If  $\alpha\in\BPF^{d-1}(X)$ is a $b$-numerical curve class, then $\alpha_{X'}$ is movable for any model $X'$ and we may set
$\fM_{X'}(\alpha):=\fM_{X'}(\alpha_{X'})$. 

\begin{thm}\label{thm:LX-Mfnt}
Pick any big curve class $\alpha\in\BPF^{d-1}(\X)$.
Then for any model $X'$, the class $\alpha_{X'}$ is movable and $\fM_{X'}(\alpha_{X'})>0$.
The family of Cartier $b$-divisor classes $\left[\Mv_{X'}(\alpha)\right]$
converges weakly to $\beta$ which is the unique nef $b$-divisor class satisfying
$\alpha = (\beta^{d-1})$.
\end{thm}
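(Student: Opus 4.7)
The plan is to adapt the strategy used in the proof of Theorem~\ref{thm:relate-to-LX}, replacing the nef-based functional $\hvol$ by the movable-based functional $\fM$ and exploiting that the two agree on $b$-classes. Since $\alpha \in \BPF^{d-1}(\X)$ is big, every incarnation $\alpha_{X'}$ is movable by Theorem~\ref{thm:interpret-BPF} and satisfies $\alpha_{X'} \ge c\,\om_{X'}^{d-1}$ for some $c>0$; combined with the Khovanskii-Teissier inequality applied to big movable classes in $X'$, this yields
\[
(\gamma\cdot\alpha_{X'})^{d/(d-1)} \ge c^{d/(d-1)} \vol(\gamma)^{1/(d-1)}(\om^d)
\]
for all big movable $\gamma \in \num^1(X')$, hence $\fM_{X'}(\alpha_{X'}) \ge c^{d/(d-1)}(\om^d)>0$. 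The unique nef $b$-divisor $\beta$ with $\beta^{d-1}=\alpha$ is provided by Theorem~\ref{thm:MA}.

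Next I would introduce the global functional
\[
\fM(\alpha) := \inf_{\substack{\gamma\in\Nef(\X)\\ (\gamma^d)>0}} \left(\frac{(\alpha\cdot\gamma)}{(\gamma^d)^{1/d}}\right)^{d/(d-1)},
\]
which coincides with $\hvol(\alpha)=(\beta^d)$ and is uniquely achieved at $\gamma=\beta$, by the same Khovanskii-Teissier argument used in the proof of Theorem~\ref{thm:relate-to-LX}. The convergence $\fM_{X'}(\alpha_{X'})\to(\beta^d)$ then follows from the sandwich $\fM(\alpha)\le\fM_{X'}(\alpha_{X'})\le\hvol_{X'}(\alpha_{X'})$: the left inequality is obtained by using $P([\gamma])\in\Nef(\X)$ as a test class in $\fM$ for each big movable $\gamma\in\num^1(X')$, since $(P([\gamma])^d)=\vol(\gamma)$ and $(\alpha\cdot P([\gamma]))\le(\alpha_{X'}\cdot\gamma)$ (envelope inequality together with the positivity statement of Proposition~\ref{prop:dual-easy}); the right inequality is immediate; and $\hvol_{X'}(\alpha_{X'})\to\hvol(\alpha)$ by Lemma~\ref{lem:existence-model}.

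Set $\nu_{X'} := \Mv_{X'}(\alpha_{X'})$ and $B_{X'} := P([\nu_{X'}])\in\Nef(\X)$. By the Lehmann-Xiao identity $\nu_{X'}\cdot\langle\nu_{X'}^{d-1}\rangle = \vol(\nu_{X'})$, one has $(\alpha\cdot[\nu_{X'}]) = (\alpha_{X'}\cdot\nu_{X'}) = \vol(\nu_{X'}) = (B_{X'}^d) = \fM_{X'}(\alpha_{X'}) \to (\beta^d)$. The envelope inequality together with Proposition~\ref{prop:dual-easy} gives $(\alpha\cdot B_{X'})\le(\alpha\cdot[\nu_{X'}])$, while Khovanskii-Teissier applied to the nef $b$-divisors $\beta,B_{X'}$ supplies $(\beta^{d-1}\cdot B_{X'})^d \ge (\beta^d)^{d-1}(B_{X'}^d)$, so $(\alpha\cdot B_{X'})\to(\beta^d)$. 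Now I apply Diskant's inequality~\eqref{eq:Diskant} to the pair $(\beta, B_{X'})$: the LHS tends to $0$ and forces the largest $t_{X'}$ with $\beta\ge t_{X'}B_{X'}$ to satisfy $t_{X'}\to 1$, whence $B_{X'}\le\beta/(1-\epsilon)$ eventually. This upper bound combined with Khovanskii-Teissier yields $(\beta\cdot B_{X'}^{d-1})\to(\beta^d)$, and a second application of Diskant to $(B_{X'},\beta)$ gives $B_{X'}\ge(1-\epsilon)\beta$. Therefore $B_{X'}\to\beta$ in $\normom{\cdot}$.

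The main obstacle is the final step: upgrading the convergence of the envelope $B_{X'}$ to weak convergence of the Cartier class $[\nu_{X'}]$ itself. The difference $\eta_{X'} := [\nu_{X'}]-B_{X'} \ge 0$ is a psef $b$-divisor satisfying
\[
(\alpha\cdot\eta_{X'}) = (\alpha\cdot[\nu_{X'}])-(\alpha\cdot B_{X'}) \to 0.
\]
Since $\alpha\ge c\,\om^{d-1}$ and $\eta_{X'}\ge 0$, this upgrades to $(\eta_{X'}\cdot\om^{d-1})\to 0$. In each fixed model $X_0$ the incarnation $(\eta_{X'})_{X_0}$ is a psef divisor class with vanishing $\om_{X_0}^{d-1}$-mass; as the functional $\xi\mapsto(\xi\cdot\om_{X_0}^{d-1})$ is continuous, linear, and strictly positive off the origin on $\psef^1(X_0)$ (hence proper on this closed cone), $(\eta_{X'})_{X_0}\to 0$ in $\num^1(X_0)$. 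This is exactly the weak convergence $\eta_{X'}\to 0$ in $\wnum^1(\X)$, and combined with $B_{X'}\to\beta$ this yields $[\nu_{X'}]\to\beta$ weakly.
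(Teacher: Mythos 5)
Your overall strategy coincides with the paper's: prove $\fM_{X'}(\alpha_{X'})\to\hvol(\alpha)=(\beta^d)$, feed the resulting asymptotic identities into Diskant's inequality to get $P([\Mv_{X'}(\alpha)])\to\beta$, and then transfer this to $[\Mv_{X'}(\alpha)]$. Your variations in the first two stages are fine and in places cleaner: the sandwich $\hvol(\alpha)\le\fM_{X'}(\alpha_{X'})\le\hvol_{X'}(\alpha_{X'})$ combined with Lemma~\ref{lem:existence-model} replaces the paper's explicit use of $[\beta_{X'}]$ as a test class, and your direct Khovanskii--Teissier lower bound for $\fM_{X'}(\alpha_{X'})$ is a legitimate alternative to the paper's bound $\fM_{X'}(\alpha_{X'})\ge\hvol(\alpha)$.

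The final step, however, is broken. The class $\om$ is a Cartier $b$-divisor determined by an ample class on $X$, so its incarnation $\om_{X_0}$ in a higher model $X_0\to X$ is only the pullback of an ample class: it is big and nef but not ample. Consequently the functional $\xi\mapsto(\xi\cdot\om_{X_0}^{d-1})$ vanishes on every effective exceptional divisor of $X_0\to X$ and is \emph{not} strictly positive on $\psef^1(X_0)\setminus\{0\}$, so you cannot conclude $(\eta_{X'})_{X_0}\to0$ from the vanishing of its $\om_{X_0}^{d-1}$-mass. (Indeed $(\eta_{X'}\cdot\om^{d-1})$ is identically zero, since $\om^{d-1}$ is Cartier determined in $X$ and $(\eta_{X'})_X=0$; if your properness argument were valid it would force $\eta_{X'}=0$, i.e.\ that $\Mv_{X'}(\alpha)$ is always nef, which is false.) The correct — and much simpler — observation, which is what the paper uses, is that no limiting argument is needed at all: since $\nu_{X'}=\Mv_{X'}(\alpha)$ is big and movable, $P([\nu_{X'}])_{X'}=\nu_{X'}$, hence for every fixed model $X_0$ and every $X'\ge X_0$ one has $[\nu_{X'}]_{X_0}=\pi_*\nu_{X'}=P([\nu_{X'}])_{X_0}=(B_{X'})_{X_0}$. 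Thus $(\eta_{X'})_{X_0}$ is eventually zero for each fixed $X_0$, and $B_{X'}\to\beta$ immediately gives $[\nu_{X'}]\to\beta$ weakly. (A smaller point of the same flavor: your appeal to Proposition~\ref{prop:dual-easy} for $(\alpha\cdot B_{X'})\le(\alpha\cdot[\nu_{X'}])$ is not literally licensed, since $[\nu_{X'}]-B_{X'}$ is psef but not Cartier; this should be justified by approximating $B_{X'}$ from above by nef Cartier classes, as in the paper's parallel step.)
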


\begin{proof}
Choose any big curve class $\alpha\in\BPF^{d-1}(\X)$, and write $\alpha = (\beta^{d-1})$  with $\beta\in\Nef(\X)$.
Pick any model $X'$ and any big and movable class $\gamma\in\num^1(X')$.
Since $\gamma$ is movable, we have $P([\gamma])_{X'} =\gamma$, and $\vol(P([\gamma]))=\vol(\gamma)$.
We thus obtain
\begin{align*}
\left(
\frac{(\gamma\cdot\alpha_{X'})}{\vol(\gamma)^{1/d}}
\right)^{d/(d-1)}
=
 \left(
\frac{(P([\gamma])\cdot[\alpha_{X'}])}{\vol(P([\gamma]))^{1/d}}
\right)^{d/(d-1)}
\mathop{\ge}\limits^{\text{Lemma~\ref{lem:negativity}}} 
 \left(
\frac{(P([\gamma])\cdot\alpha)}{\vol(P([\gamma]))^{1/d}}
\right)^{d/(d-1)}
\ge
\hvol(\alpha)~,
\end{align*}
which proves $\fM_{X'}(\alpha) \ge \hvol(\alpha) >0$.

On the other hand, we claim that: 
\begin{equation}\label{eq:1234}
\lim_{X'} \left(
\frac{([\beta_{X'}]\cdot\alpha)}{\vol(\beta_{X'})^{1/d}}
\right)^{d/(d-1)}
=\hvol(\alpha)
~.\end{equation}
Indeed, since $\vol(\beta_{X'})=\langle P([\beta_{X'}])^d\rangle$ and $ P([\beta_{X'}])$ is a decreasing net of nef divisor classes converging to $\beta$, 
we have $\vol(\beta_{X'})\to \vol(\beta)$ by Theorem~\ref{thm:intersection}.
We also have $([\beta_{X'}]\cdot\alpha)\to (\beta\cdot\alpha)$ as $X'$ tends to infinity in the net of all models. 
Indeed this fact is obvious when $\alpha$ is Cartier, and the general case follows from 
Corollary~\ref{cor:approx-curve}. This proves~\eqref{eq:1234}.
Since 
\[
 \fM_{X'}(\alpha)\le
 \left(
\frac{([\beta_{X'}]\cdot\alpha)}{\vol(\beta_{X'})^{1/d}}
\right)^{d/(d-1)}
\]
we conclude that $\lim_{X'} \fM_{X'}(\alpha) = \hvol(\alpha)$.

We get
\[
|(\beta^d)- (P([\Mv_{X'}(\alpha)])^d)| = |\hvol(\alpha) - \fM_{X'}(\alpha)| \to 0
\]
and
\begin{align*}
(\beta^{d-1}\cdot P([\Mv_{X'}(\alpha)]))
&= 
(\alpha\cdot P([\Mv_{X'}(\alpha)]))
\le([\alpha_{X'}]\cdot P([\Mv_{X'}(\alpha)]))
\\
\le 
(P([\Mv_{X'}(\alpha)])^d),
\end{align*}
and Diskant's estimates~\eqref{eq:Diskant} imply $P([\Mv_{X'}(\alpha)])\to \beta$.
Since $P([\Mv_{X'}(\alpha)])_{X'} = \Mv_{X'}(\alpha)$, it follows that
 $[\Mv_{X'}(\alpha)]\to \beta$. 
\end{proof}

\begin{rem}
Fulger and Lehmann~\cite{fulger_lehmann_zariski} have also introduced a notion of Zariski decomposition of a big class $\alpha \in\num^k(X)$ for any $k$.
This decomposition is based on the mobility functional which is not invariant under birational pull-backs. We dont know how to extend this functional to the space of Cartier $b$-numerical cycles, 
not to mention to the BPF cone. 
\end{rem}


\bibliographystyle{alpha}
\bibliography{biblio}
\addcontentsline{toc}{section}{References}

\end{document}